\DeclareSymbolFont{stmry}{U}{stmry}{m}{n}
\numberwithin{equation}{section}
\let\originalleft\left
\let\originalright\right
\renewcommand{\left}{\mathopen{}\mathclose\bgroup\originalleft}
\renewcommand{\right}{\aftergroup\egroup\originalright}
\renewcommand{\cleardoublepage}{\clearpage\ifodd\c@page\else\vspace*{\fill}\thispagestyle{empty}\newpage\fi}
\numberwithin{equation}{section}
\newaliascnt{corollary}{claim}
\newtheorem{corollary}[corollary]{Corollary}
\newaliascnt{definition}{claim}
\newtheorem{definition}[definition]{Definition}
\newaliascnt{lemma}{claim}
\newtheorem{lemma}[lemma]{Lemma}
\newaliascnt{proposition}{claim}
\newtheorem{proposition}[proposition]{Proposition}
\newaliascnt{theorem}{claim}
\newtheorem{theorem}[theorem]{Theorem}
\newaliascnt{example}{claim}
\newaliascnt{remark}{claim}
\newtheorem{remark}[remark]{Remark}
\DeclareMathSymbol\chairxboxempty{\mathord}{AMSa}{"03}
\theoremstyle{nonumberplain}
\newtheorem{proof}{Proof}
\newenvironment{enumerate2}{%
  \begin{enumerate}[%
      topsep = 0.2em,
      partopsep = 0em,
      itemsep = 0.2em,
      parsep = 0.1em,
      label=\itshape\roman*.)%
      ]%
    }%
    {\end{enumerate}}
\DeclareMathAlphabet{\chairxmathbbm}{U}{bbm}{m}{n}
\SetMathAlphabet\chairxmathbbm{bold}{U}{bbm}{bx}{n}
\renewcommand{\mathbb}[1]{\chairxmathbbm{#1}}
\DeclareMathAlphabet{\mathscr}{U}{rsfso}{m}{n}
\DeclareSymbolFont{EulerScript}{U}{eus}{m}{n}
\DeclareSymbolFontAlphabet\mathcal{EulerScript}
\newcommand{\red}{\mathrm{red}} 
\newcommand{\std}{{\scriptscriptstyle{\mathrm{std}}}} 
\newcommand{\CE}{{\scriptscriptstyle{\mathrm{CE}}}}
\newcommand{\nice}{{\scriptscriptstyle{\mathrm{nice}}}}  
\newcommand{\BRST}{{\scriptscriptstyle{\mathrm{BRST}}}}  
\newcommand{\Cinfty}{\mathscr{C}^\infty}
\newcommand{\I}{\mathrm{i}}
\newcommand{\ring}[1]{\mathsf{#1}}
\newcommand{\group}[1]{\mathrm{#1}}
\newcommand{\liealg}[1]{\mathfrak{#1}}
\newcommand{\algebra}[1]{\mathscr{#1}}
\newcommand{\Anti}{\Lambda}
\newcommand{\HCE}{\mathrm{H}_\CE}
\newcommand{\prol}{\mathrm{prol}}
\newcommand{\cc}[1]{\overline{{#1}}}
\DeclarePairedDelimiter{\abs}{\lvert}{\rvert}
\newcommand{\id}{\operatorname{\mathrm{id}}}
\newcommand{\pr}{\operatorname{\mathrm{pr}}}
\newcommand{\argument}{\,\cdot\,}
\newcommand{\ad}{\operatorname{\mathrm{ad}}}
\newcommand{\Ad}{\operatorname{\mathrm{Ad}}}
\newcommand{\ins}{\operatorname{\mathrm{i}}}
\newcommand{\jns}{\operatorname{\mathrm{j}}}
\newcommand{\at}[2][\big]{#1\vert_{#2}}
\newcommand{\image}{\operatorname{\mathrm{im}}}
\newcommand{\prehilb}[1]{\mathcal{#1}}
\newcommand{\Bounded}{\mathscr{B}}
\newcommand{\End}{\operatorname{\mathrm{End}}}
\newcommand{\Clifford}{\operatorname{\mathrm{Cl}}}
\DeclarePairedDelimiter{\SP} {\langle}{\rangle}
\newcommand{\D}{\mathop{}\!\mathrm{d}}
\newcommand{\lefttriv}{\mathrm{left}}
\newcommand{\del}{\mathop{}\!\partial}
\newcommand{\basis}[1]{\mathit{#1}}
\newcommand{\Lie}{\mathscr{L}}
\newcommand{\tr}{\operatorname{\mathrm{tr}}}
\newcommand{\Span}[1][]{\operatorname{\mathrm{span}_{#1}}}
\DeclareMathOperator{\Gh} {\mathsf{Gh}}
\newcommand{\HBRST}{\mathrm{H}_\BRST}
\newcommand{\boldHBRST}{\boldsymbol{\mathrm{H}}_\BRST}
\newcommand{\boldHCE}{\boldsymbol{\mathrm{H}}_{\CE}}
\newcommand{\HBRSTtilde}{\widetilde{\mathrm{H}}_\BRST}
\newcommand{\boldHBRSTtilde}{\widetilde{\boldsymbol{\mathrm{H}}}_\BRST}
\newcommand{\BRSTalgebras} {\mathsf{BRST}{\textrm{-}\mathsf{Alg}}{}}
\newcommand{\iBRSTalg}   {\mathsf{iBRST}\textrm{-}^*{\mathsf{Alg}}{}}
\newcommand{\myemail}{\texttt{stefan.waldmann@mathematik.uni-wuerzburg.de}}
\newcommand{\myaddress}{Julius Maximilian University of Würzburg \\
     Department of Mathematics \\
     Chair of Mathematics X (Mathematical Physics) \\
     Emil-Fischer-Straße 31 \\
     97074 Würzburg \\
     Germany}
\newcommand{\AuthorOne}{Chiara Esposito}
\newcommand{\AuthorTwo}{Andreas Kraft}
\newcommand{\AuthorThree}{Stefan Waldmann}
\author{\AuthorOne\thanks{\AuthorEmailOne},
    \addtocounter{footnote}{2}
    \AuthorTwo\thanks{\AuthorEmailTwo}\\[0.5cm]
    \AuthorAddressOne \\[1cm]
    \AuthorThree\thanks{\AuthorEmailThree}\\[0.5cm]
    \AuthorAddressThree
}
\newcommand{\AuthorAddressOne}{Dipartimento di Matematica \\
     Universit\`a degli Studi di Salerno \\
     via Giovanni Paolo II, 132 \\
     84084 Fisciano (SA) \\
     Italy
}
\newcommand{\AuthorAddressThree}{\myaddress}
\newcommand{\AuthorEmailOne}{\texttt{chesposito@unisa.it}}
\newcommand{\AuthorEmailTwo}{\texttt{akraft@unisa.it}}
\newcommand{\AuthorEmailThree}{\myemail}
\title{BRST Reduction of Quantum Algebras with $^*$-Involutions}
\date{\today}
\begin{document}

\selectlanguage{english}

%
%

\maketitle

%
%

\begin{abstract}
    In this paper we investigate the compatibility of the BRST
    reduction procedure with the Hermiticity of star products.  First,
    we introduce the generalized notion of abstract BRST algebras with
    corresponding involutions.  In this setting we define adjoint BRST
    differentials and as a consequence one gets new BRST
    quotients. Passing to the quantum BRST setting we show that for
    compact Lie groups the new quantum BRST quotient and the quantum
    BRST cohomology are isomorphic in zero degree implying that
    reduction is compatible with Hermiticity.
\end{abstract}

\newpage

%
%

\tableofcontents
\newpage

%
%

\section{Introduction}
\label{sec:Introduction}

The aim of this work is to investigate the compatibility of the BRST
reduction in deformation quantization, as introduced in
\cite{bordemann.herbig.waldmann:2000a}, with the Hermiticity of star
products.
Deformation quantization as introduced in \cite{bayen.et.al:1978a} by
Bayen, Flato, Fronsdal, Lichnerowicz and Sternheimer relies on the
idea that the quantization of a symplectic or Poisson manifold $M$
representing the phase space of a classical mechanical system is
described by a formal deformation of the commutative algebra of smooth
complex-valued functions $\Cinfty(M)$.  Explicitly, one defines a
\emph{star product} $\star$ on $M$ being a
$\mathbb{C}[[\lambda]]$-bilinear associative product on
$\Cinfty(M)[[\lambda]]$ of the form
\begin{equation}
 	f\star g
	=
	f\cdot g + \sum_{r=1}^\infty \lambda^r C_r(f,g)
\end{equation}
for any $f,g\in\Cinfty(M)[[\lambda]]$, where $C_1(f,g) - C_1(g,f) = \I
\{f,g\}$ and where all the terms $C_r$ are bidifferential operators
vanishing on constants.  Here the formal parameter $\lambda$ is
supposed to be real.
Thus the quantum observables are described by the non commutative
algebra $(\Cinfty(M)[[\lambda]], \star)$.  In order to get a *-algebra
structure on the quantum observables we need to consider a
*-involution for the star product.  One calls the star product
\emph{Hermitian} if the complex conjugation is an involution, i.e. if
$\cc{f \star g} = \cc{g} \star \cc{f}$ for all $f,g\in
\Cinfty(M)[[\lambda]]$. The existence and classification of general
star products on Poisson manifolds has been provided by Kontsevich's
famous formality theorem \cite{kontsevich:2003a} and the existence of
Hermitian star products on symplectic manifolds was shown in
\cite{neumaier:2001a,neumaier:2002a}.

At the classical level it is possible to perform, under fairly general
conditions, the phase space reduction which constructs from the
original phase space $M$ one of a smaller dimension denoted by
$M_\red$, see e.g. \cite{marsden.weinstein:1974a}.
More precisely, suppose that a Lie group $\group{G}$ acts by symplectomorphisms
resp. Poisson diffeomorphisms and that it allows an $\Ad^*$-equivariant
momentum map $J\colon M\longrightarrow \liealg{g}^*$ with $0\in \liealg{g}^*$
as value and regular value, where $\liealg{g}$ denotes the Lie algebra of
$\group{G}$. Then $C = J^{-1}(\{0\})$ is a closed embedded submanifold of
$M$, called regular constraint surface. If the action is in addition
proper and free, then the reduced manifold $M_\red$ given by the orbit
space $C / \group{G}$ is again a symplectic resp. Poisson manifold.

In the setting of deformation quantization a quantum reduction scheme
has been introduced in \cite{bordemann.herbig.waldmann:2000a},
see also \cite{dippell.esposito.waldmann:2019a} for a more
categorical approach to reduction in both the quantum and classical setting.
One of the crucial ingredients is the notion of quantum momentum
maps \cite{xu:1998a}. Given a star product on $M$, a quantum momentum map is
a map $\boldsymbol{J} = J + \sum_{r=1}^\infty \lambda^r \boldsymbol{J}_r
\colon M \longrightarrow \liealg{g}^*[[\lambda]]$ into the formal
series of smooth functions on $M$ such that
\begin{equation}
  \boldsymbol{J}(\xi) \star f - f \star \boldsymbol{J}(\xi)
  =
  \I \lambda \{J(\xi),f\}
  \quad
  \text{and}
  \quad
  \boldsymbol{J}(\xi) \star \boldsymbol{J}(\eta)
  -\boldsymbol{J}(\eta) \star \boldsymbol{J}(\xi)
  =
  \I \lambda \boldsymbol{J}([\xi,\eta])
\end{equation}
for all $\xi,\eta\in\liealg{g}$ and $f\in \Cinfty(M)[[\lambda]]$. Here
$\boldsymbol{J}(\xi) =\SP{\boldsymbol{J},\xi}$ denotes the pointwise
dual pairing, see
\cite{gutt.rawnsley:2003a,mueller-bahns.neumaier:2004a}.  The map
$\boldsymbol{J}$ is called \emph{quantum momentum map} and the pairs
$(\star,\boldsymbol{J})$ are called \emph{equivariant star products},
see also \cite{reichert.waldmann:2016a,reichert:2017a,reichert:2017b}
for a classification in the symplectic setting.

The BRST approach provides then a tool to construct a reduced star
product $\star_\red$ on $M_\red$ that is induced by the equivariant
star product $(\star,\boldsymbol{J})$ on $M$ and thus implies that the
deformation quantization is compatible with the classical phase space
reduction.  Here the abbreviation BRST stands for the particle
physicists Becchi, Rouet, Stora \cite{becchi.rouet.stora:1976a} and
Tyutin \cite{tyutin:2008a} who investigated gauge invariances by
introducing new variables, the ``ghosts'' and ``antighosts''. see
also \cite{henneaux.teitelboim:1992a} for further applications in
physics.  Kostant and Sternberg \cite{kostant.sternberg:1987a}
transferred this idea to the setting of symplectic resp. Poisson geometry,
introducing the classical BRST algebra $\mathcal{A}^{\bullet,\bullet}
=\Anti^\bullet \liealg{g}^* \otimes\Anti^\bullet \liealg{g} \otimes
\Cinfty(M)$ with ghost number grading $\mathcal{A}^{(n)} =
\bigoplus_{n=k-\ell}\mathcal{A}^{k,\ell}$ and a corresponding super Poisson
structure induced by the natural pairing of $\liealg{g}^*$ and
$\liealg{g}$. The two characteristic features of the classical BRST
algebra are the classical BRST operator $D \colon
\mathcal{A}^{(\bullet)} \longrightarrow \mathcal{A}^{(\bullet+1)}$,
satisfying $D^2=0$, and the ghost number derivation $\Gh$ inducing the
ghost number grading.  With these notions it was shown that one has
the following isomorphism of Poisson algebras
\begin{equation}
  \label{eq:ZeroBRSTCohomologyCongMred}
  \HBRST^{(0)}(\mathcal{A})
  \cong
  \Cinfty(M_\red),
\end{equation}
where the classical BRST cohomology $\HBRST^{(\bullet)}(\mathcal{A})$
is the cohomology of $(\mathcal{A}^{(\bullet)},\{\argument,\argument\},D)$,
see also \cite{forger.kellendonk:1992a}.  As mentioned above,
Bordemann, Herbig and Waldmann \cite{bordemann.herbig.waldmann:2000a}
transferred this result to the setting of deformation quantization and
constructed the standard ordered quantum BRST algebra
$(\mathcal{A}^{(\bullet)}[[\lambda]],\star_\std,\boldsymbol{D}_\std)$
as formal deformation of the classical BRST algebra. In particular,
they proved the quantum analogue of
\eqref{eq:ZeroBRSTCohomologyCongMred}, namely
\begin{equation}
  \mathcal{A}_\red
  =
  \boldHBRST^{(0)}(\mathcal{A}[[\lambda]])
  \cong
  \Cinfty(M_\red)[[\lambda]].
\end{equation}
Here $\boldHBRST^{(\bullet)}(\mathcal{A}[[\lambda]])$ denotes the
cohomology of the quantized BRST algebra, the so-called quantum BRST
cohomology, and the ghost number zero part $\mathcal{A}_\red$ is
called reduced quantum BRST algebra.
The above construction induces a star product $\star_\red$ on the
reduced manifold, but if the star product on $M$ is Hermitian, the
construction does not yield an involution for it.
The main problem here is that in general homological algebra
is not compatible with involutions and positive definite inner products. Therefore, the new
question addressed in this work is whether one can modify the BRST
reduction in such a way that it gives in addition an induced
*-involution for the reduced star product. Note that there is also a
different way to construct involutions for $\star_\red$ via
*-representations, see \cite{bordemann:2005a,gutt.waldmann:2010a}.
A more general reduction scheme can also be found in
\cite{cattaneo.felder:2007a}.

To this end we introduce a notion of abstract BRST algebras and
investigate various concepts of involutions that are compatible with
the gradings.  We show that graded *-involutions with imaginary ghost
operator are the best suited involutions as they guarantee the
existence of non-trivial *-representations on pre-Hilbert spaces,
which is necessary from the physical point of view to encode for
example the superposition principle.  Applying these abstract results
to the setting of deformation quantization, we construct such an
involution for the quantum BRST algebra $\mathcal{A}[[\lambda]]$ by
means of a positive definite inner product on the Lie algebra as
additional information. In this case, we prove that the so constructed
*-algebra has sufficiently many positive linear functionals in
the sense of \cite{bursztyn.waldmann:2000a,bursztyn.waldmann:2001a,
bursztyn.waldmann:2005a,bursztyn.waldmann:2005b},
guaranteeing a non-trivial *-representation theory via GNS representations,
see also \cite{bordemann.waldmann:1998a}. Finally, we
introduce the adjoint quantum BRST operator $\boldsymbol{D}_\std^*$
and the quantum BRST quotient
\begin{equation}
  \boldHBRSTtilde^{(\bullet)} (\mathcal{A}[[\lambda]])
  =
  \frac{\ker \boldsymbol{D}_\std\cap \ker \boldsymbol{D}_\std^*}
       {\image \boldsymbol{D}_\std \cap \image \boldsymbol{D}_\std^*}.
\end{equation}
We show for compact Lie groups that its zero-th order is isomorphic
to the reduced BRST algebra, i.e.
\begin{equation}
  \boldHBRSTtilde^{(0)} (\mathcal{A}[[\lambda]])
  \cong
  \mathcal{A}_\red \cong \Cinfty(M_\red)[[\lambda]].
\end{equation}
The crucial ingredient in the proof is a
$\Cinfty(C)^\group{G}[[\lambda]]$-valued inner product, similarly to
algebra-valued inner products on Hilbert-modules \cite{lance:1995a}, but
over $\mathbb{C}[[\lambda]]$ as in \cite{gutt.waldmann:2010a}.
In particular, this isomorphism induces the complex conjugation as
involution for $\star_\red$, hence the BRST reduction of Hermitian
star products yields in this setting Hermitian reduced star products.
In other words, we show that
$\boldHBRST^{(\bullet)}(\mathcal{A}[[\lambda]])$ and
$\boldHBRSTtilde^{(\bullet)}(\mathcal{A}[[\lambda]])$ are isomorphic
in ghost number zero if the Lie group acting on $M$ is compact, which
provides a large class of examples for the physically relevant
invariants.

The paper is organized as follows: In
Section~\ref{section:Preliminaries} we recall the basics concerning
the classical BRST algebra and its counterpart in deformation
quantization. Then we introduce in
Section~\ref{section:AbstractBRSTalgebras} the notion of abstract BRST
algebras and look for compatible involutions and their *-representation
theory. Having found a suitable
concept of involutions we apply this idea in
Section~\ref{section:InvolutionsforQuantumBRSTAlgebra} at first to the
Grassmann part and then finally to the quantum BRST algebra. The
results of this paper are partially based on the master thesis
\cite{kraft:2018a}.

%
%
\section{Preliminaries}
\label{section:Preliminaries}

%
%
\subsection{The Classical BRST Complex and Cohomology}
\label{subsection:ClassicalBRSTComplexandCohomology}

In this section we recall the description of the classical
Marsden-Weinstein reduction via the classical BRST cohomology in order
to establish the notation. We refer to
\cite{bordemann.herbig.waldmann:2000a,forger.kellendonk:1992a,
  kostant.sternberg:1987a}.

Let us consider a Hamiltonian $\group{G}$-space
$(M,\group{G},J)$ consisting in a symplectic or Poisson manifold
$(M,\omega)$ resp. $(M,\pi)$ and a Hamiltonian action
$\Phi \colon \group{G} \times M \longrightarrow M$ with momentum map $J$.
It is well-known that the
quotient $M_\red = C / \group{G}$, where $C= J^{-1}(\{0\})$ with $0$
being a regular value of $J$, inherits a symplectic resp. Poisson structure
from $M$ if the action is free and proper, see
\cite{marsden.weinstein:1974a}. In addition, we can identify
$\Cinfty(M_\red)$ with $\Cinfty(C)^\group{G}$. From now on we call
$(M,\group{G},J,C)$ \emph{Hamiltonian $\group{G}$-space with
  regular constraint surface} and we denote by $\iota \colon C
\rightarrow M$ the canonical embedding and by $\mathcal{I}_C = \ker
\iota^*$ the \emph{vanishing ideal} of $C$.  Using a tubular
neighbourhood one can construct a \emph{prolongation map}
\begin{equation}
  \prol \colon
  \Cinfty(C)
  \longrightarrow
  \Cinfty(M)
\end{equation}
with $\iota^* \prol = \id\at{\Cinfty(C)}$, see
\cite[Lemma~2]{bordemann.herbig.waldmann:2000a}.  This yields in
particular $\Cinfty(C) \cong \Cinfty(M) /\mathcal{I}_C$.
Note that if the action is proper on $M$, then the prolongation map
can even be chosen to be $\group{G}$-equivariant.

We aim to give another description of the Poisson algebra
$\Cinfty(M_\red)$.  Let us consider the
$\mathbb{Z}\times\mathbb{Z}$-graded vector space
\begin{equation}
  \mathcal{A}^{\bullet,\bullet}
  =
  \Anti^\bullet \liealg{g}^* \otimes
  \Anti^\bullet\liealg{g} \otimes
  \Cinfty(M),
\end{equation}
where the gradings are also called \emph{ghost} and \emph{antighost
  degree}.  Then $\mathcal{A}$ carries a natural $\mathbb{Z}_2$-graded
vector space structure $\mathcal{A} = \Anti^{\text{even}}(\liealg{g}^*
\oplus \liealg{g})\otimes \Cinfty(M) \oplus
\Anti^{\text{odd}}(\liealg{g}^*\oplus \liealg{g})\otimes \Cinfty(M)$.
The $\mathbb{Z}$-grading
\begin{equation}
  \mathcal{A}^{(n)}
  =
  \bigoplus_{n= k-\ell} \mathcal{A}^{k,\ell},
\end{equation}
is called the \emph{ghost number} or \emph{total degree}. In
particular, the ghost number grading and the $\mathbb{Z}\times
\mathbb{Z}$-grading induce the same $\mathbb{Z}_2$-grading, so the
notions of super derivations with respect to the
$\mathbb{Z}_2$-grading and of graded derivations with respect to the
ghost number grading coincide.  With the $\wedge$-product of forms
$(\alpha \otimes \xi)\wedge (\beta \otimes \eta) = (-1)^{k\ell} (\alpha
\wedge \beta) \otimes (\xi \wedge \eta)$ for $\alpha \in \Anti^\bullet
\liealg{g}^*, \beta\in\Anti^k\liealg{g}^*, \xi \in\Anti^\ell\liealg{g}$
and $\eta\in \Anti^\bullet\liealg{g}$ and the pointwise product of
functions, $\mathcal{A}$ becomes an associative, super-commutative
algebra that is graded with respect to all the above mentioned
degrees. The element $ 1 \otimes 1\otimes 1$ is a unit and one has the
following differentials:
\begin{itemize}
\item The vertical differential is the Chevalley-Eilenberg
  differential
  \begin{equation}
    \label{eq:CEonBRST}
    \delta \colon
    \Anti^\bullet \liealg{g}^* \otimes
    \Anti^\bullet\liealg{g} \otimes
    \Cinfty(M)
    \longrightarrow
    \Anti^{\bullet+1} \liealg{g}^* \otimes
    \Anti^\bullet\liealg{g} \otimes
    \Cinfty(M),
  \end{equation}
  where the representation of $\liealg{g}$ on $\Anti^\bullet
  \liealg{g}\otimes \Cinfty(M)$ is defined by
  \begin{equation}
    \label{eq:ActionofgonKoszulComplex}
    \liealg{g}\ni \xi
    \mapsto
    \rho(\xi)
    =
    \ad(\xi) \otimes \id + \id \otimes \{J(\xi),\argument\} \in
	      \End(\Anti^\bullet \liealg{g}\otimes \Cinfty(M)).
  \end{equation}
  The corresponding cohomology is denoted by
  $\HCE^\bullet(\mathcal{A})$.
\item The horizontal differential
  $\del\colon\mathcal{A}^{\bullet,\bullet} \longrightarrow
  \mathcal{A}^{\bullet,\bullet-1}$ is the extended Koszul
  differential, explicitly given by $\del(\alpha\otimes x \otimes f) =
  (-1)^k \alpha \otimes \ins(J)(x\otimes f)$ for all $\alpha \in
  \Anti^k\liealg{g}^*, x\in\Anti^\bullet\liealg{g}$ and $f\in
  \Cinfty(M)$.
	Here $\ins(J)$ means the left insertion of $J$, i.e. the
	standard interior product.
\end{itemize}
One can show that $(\mathcal{A}^{\bullet,\bullet}, \del, \delta)$ is a double
complex and that the total differential
\begin{equation}
  \label{eq:TotalDifferential}
  D
  =
  \delta + 2 \del \colon
  \mathcal{A}^{(\bullet)}
  \longrightarrow
  \mathcal{A}^{(\bullet+1)}
\end{equation}
is a well-defined coboundary operator on the total complex
$\mathcal{A}^{(\bullet)}$, the so-called \emph{classical BRST
  operator}, see \cite[Section 4]{bordemann.herbig.waldmann:2000a}.
Note that the factor $2$ in front of the Koszul differential in
\eqref{eq:TotalDifferential} is just a convention and that the ghost
and antighost degrees are not respected by $D$, but the total degree
is.

It turns out that $\mathcal{A}^{(\bullet)}$ has also a natural super
Poisson stucture induced by the natural pairing of $\liealg{g}$ and
$\liealg{g}^*$.  Concerning the compatibility of this super Poisson
structure with the grading and the BRST operator one finds the
following properties:

\begin{itemize}
\item Let $2\gamma$ be the identity endomorphism of $\liealg{g}$,
  regarded as an element $\gamma = \frac{1}{2}\basis{e}^a \wedge
  \basis{e}_a \in \mathcal{A}^{1,1}$ in terms of a basis $\basis{e}_1,\dots,
	\basis{e}_n$ of $\liealg{g}$ with dual basis $\basis{e}^1,\dots, \basis{e}^n$.
	Then the ghost number grading of $\mathcal{A}^{(\bullet)}$ is induced by the
  \emph{ghost number derivation} $\Gh = \{\gamma,\argument\}$, i.e.  $\phi
  \in \mathcal{A}^{(k)}$ if and only if $\Gh\phi = k \phi$.  The
  element $\gamma\in \mathcal{A}^{(0)}$ is called \emph{ghost charge}.
\item The total differential $D$ fulfils $D=\{\Theta,\argument\}$ with
  $\Theta = \Omega + J$ and $\Omega = -\frac{1}{2} [\argument,\argument] =
  -\frac{1}{4}f^i_{jk} \basis{e}^j\wedge\basis{e}^k\wedge
  \basis{e}_i$, where $f^i_{jk}$ are the structure constants of $\liealg{g}$.
	In particular, the classical BRST operator is an
  inner Poisson derivation of degree $1$ and the odd element $\Theta
  \in \mathcal{A}^{(1)}$ is called \emph{classical BRST charge}.
\end{itemize}
Summarizing, one calls the differential $\mathbb{Z}$-graded super
Poisson algebra $(\mathcal{A}^{(\bullet)},D,\{\argument,\argument\})$
\emph{classical BRST algebra}, and the corresponding cohomology
$\HBRST^{(\bullet)}(\mathcal{A}) = \ker D/\image D$ \emph{classical
  BRST cohomology}. Since the classical BRST operator is an inner
Poisson derivation, it immediately follows that
$\HBRST^{(\bullet)}(\mathcal{A})$ inherits a $\mathbb{Z}$-graded super
Poisson structure from the classical BRST algebra. Moreover, $[1] \in
\HBRST^{(\bullet)}(\mathcal{A})$ is a unit with respect to the
$\wedge$-product, see \cite[Lemma~9]{bordemann.herbig.waldmann:2000a}.
It has been proved that in ghost number zero one has the isomorphism
\begin{equation}
  \label{eq:ClassicalIsoBRSTCE}
  \HBRST^{(0)}(\mathcal{A})
  \cong
  \HCE^0(\liealg{g}, \Cinfty(C))
  \cong
  \Cinfty(M_\red)
\end{equation}
of Poisson algebras, inducing a Poisson structure on the reduced
manifold, see \cite[Prop. 10]{bordemann.herbig.waldmann:2000a}.

%
%
\subsection{The Quantum BRST Complex and Cohomology}
\label{section:QuantumBRSTComplexandCohomology}

In a similar fashion, now one can perform all the above constructions
in the framework of deformation quantization, where we follow again
\cite{bordemann.herbig.waldmann:2000a}.  The underlying vector space
of the quantum BRST algebra are the formal power series
$\mathcal{A}^{(\bullet)}[[\lambda]]$ with values in the classical BRST
algebra, inheriting all the gradings of $\mathcal{A}$.

Let $(M,\group{G},J)$ be a Hamiltonian $\group{G}$-space with
star product $\star$. A \emph{quantum momentum map} is a formal series
$\boldsymbol{J} = \sum_{r=0}^\infty \lambda^r \boldsymbol{J}_r\colon M
\longrightarrow \liealg{g}^*[[\lambda]]$ of smooth functions
$\boldsymbol{J}_r\colon M\longrightarrow \liealg{g}^*$ such that
$\boldsymbol{J}_0 = J$ and such that $\boldsymbol{J}$ satisfies
\begin{equation}
  \label{eq:DefiEquivariantStarProduct}
  \boldsymbol{J}(\xi) \star \boldsymbol{J}(\eta)
  - \boldsymbol{J}(\eta) \star \boldsymbol{J}(\xi)
  = \I\lambda \textbf{J}([\xi,\eta])
  \quad
  \text{and}
  \quad
  \boldsymbol{J}(\xi)\star f
  -  f \star  \boldsymbol{J}(\xi)
  =
  \I \lambda \{J(\xi),f\}
\end{equation}
for all $\xi,\eta \in \liealg{g}$ and $f\in \Cinfty(M)[[\lambda]]$.
The pair $(\star,\boldsymbol{J})$ is called \emph{equivariant star
  product}.  The first property in
\eqref{eq:DefiEquivariantStarProduct} is also called \emph{quantum
  covariance} and ensures that the quantum momentum map is a morphism
of Lie algebras. Moreover, it implies that a Lie algebra
representation $\boldsymbol{\rho}_M$ is given by
\begin{equation}
  \label{eq:gRepQuantumAlgM}
  \boldsymbol{\rho}_M(\xi)
  =
  \frac{1}{\I \lambda} \ad(\boldsymbol{J}(\xi))
\end{equation}
for $\xi\in\liealg{g}$.  The second property implies that the star
product is also $\group{G}$-\emph{invariant}, i.e. satisfies
$\Phi_g^*(f\star h)=\Phi_g^*(f)\star \Phi_g^*(h)$ for all $g \in
\group{G}, f,h \in \Cinfty(M)[[\lambda]]$, and that $\boldsymbol{\rho}_M$
coincides with the classical action $\rho_M(\xi) = - \Lie_{\xi_M}$.
The quadruple $(M,\star,\group{G},\boldsymbol{J})$ is called
\emph{Hamiltonian quantum $\group{G}$-space} if
$(M,\group{G},J)$ is a Hamiltonian $\group{G}$-space with
equivariant star product $(\star,\boldsymbol{J})$.  Similarly, we call
$(M,\star,\group{G},\boldsymbol{J},C)$ a \emph{Hamiltonian quantum
  $\group{G}$-space with regular constraint surface} if
$(M,\group{G},J,C)$ is a Hamiltonian $\group{G}$-space with
regular constraint surface and equivariant star product
$(\star,\boldsymbol{J})$.  Recall that in the symplectic case one
can construct for every proper and strongly
Hamiltonian group action an equivariant star product
$(\star,J)$, i.e. with $\boldsymbol{J}=J$, see
\cite[Sect.~5.8]{fedosov:1996a}. Such star products are also called
\emph{strongly invariant}.

Therefore, we assume from now on that
$(M,\star,\group{G},\boldsymbol{J},C)$ is a Hamiltonian quantum
$\group{G}$-space with regular constraint surface.  A quantized
version of the Grassmann part $\Anti^\bullet(\liealg{g}^* \oplus
\liealg{g})$ can be constructed in the following way, see
\cite[Sect.~5]{bordemann.herbig.waldmann:2000a}.

Let $\mu$ denote the $\wedge$-product. The \emph{standard ordered star
product} $\circ_\std$ for $ \Anti^\bullet (
\liealg{g}_\mathbb{C}^*\oplus \liealg{g}_\mathbb{C})[[\lambda]]$
is defined by
\begin{equation}
  \label{eq:DefiStarStdGrassmann}
  a \circ_\std b
  =
  \mu \circ e^{2\I\lambda P^*} a \otimes b
\end{equation}
with $P^* = \jns(\basis{e}^k) \otimes \ins(\basis{e}_k)$, where $\jns$
denotes the right insertion and $\ins$ the left insertion.
It is a formal deformation of the $\wedge$-product: it is a
$\mathbb{C}[[\lambda]]$-bilinear, associative map such that for
all homogeneous $a,b\in\Anti^\bullet (
\liealg{g}_\mathbb{C}^*\oplus \liealg{g}_\mathbb{C})[[\lambda]]$
one has $1\circ_\std a = a = a\circ_\std 1$ and
\begin{equation}
  a \circ_\std b
  =
  a \wedge b + \sum_{r=1}^\infty \lambda^r C_r(a,b)
\end{equation}
with $C_1(a,b)-(-1)^{\abs{a}\abs{b}}C_1(b,a) = \I \{a,b\}$.

Tensoring the standard ordered star product on the Grassmann part with
the equivariant star product $(\star,\boldsymbol{J})$ for the
functions, we obtain an associative product $\star_\std$ for
$\mathcal{A}[[\lambda]]$. Explicitly, we have for $a, b\in
\Anti^\bullet (\liealg{g}_\mathbb{C}^*\oplus
\liealg{g}_\mathbb{C})[[\lambda]]$ and $f,g\in \Cinfty(M)[[\lambda]]$
\begin{equation}
  \label{eq:StarProductQuantumBRSTAlgebra}
  (a \otimes f) \star_\std (b \otimes g)
  =
  (a \circ_\std b) \otimes (f \star g).
\end{equation}
In analogy to the classical
case one defines the \emph{standard ordered quantum BRST charge} by
\begin{equation}
  \Theta_\std
  =
  \Omega + \boldsymbol{J} + \I \lambda \chi.
\end{equation}
Here $\chi\in \liealg{g}^* \subset \mathcal{A}^{(1)}[[\lambda]]$ is
defined by $\chi(\xi)=\frac{1}{2}\tr(\ad(\xi))$ for all
$\xi\in\liealg{g}$, whence $\Theta_\std$ coincides in the zero-th
order of $\lambda$ with the classical $\Theta$. One can compute
$\Theta_\std \star_\std \Theta_\std = 0$. Consequently, the
\emph{standard ordered quantum BRST operator} is given by
\begin{equation}
  \boldsymbol{D}_\std
  =
  \frac{1}{\I\lambda} \ad_\std(\Theta_\std),
\end{equation}
where $\ad_\std$ denotes the taking of the super commutator with
respect to the standard ordered star product, and it is also a
deformation of the classical BRST operator $D$.  Then
the \emph{standard ordered BRST algebra}
$(\mathcal{A}^{(\bullet)}[[\lambda]], \star_\std,
\boldsymbol{D}_\std)$ becomes a differential $\mathbb{Z}$-graded
algebra with unit over $\mathbb{C}[[\lambda]]$.

The standard ordered quantum BRST operator splits into two
differentials:
\begin{itemize}
\item The \emph{quantized Chevalley-Eilenberg differential}
  $\boldsymbol{\delta}\colon \mathcal{A}^{\bullet,\bullet}[[\lambda]]
  \longrightarrow \mathcal{A}^{\bullet +1,\bullet}[[\lambda]]$,
  i.e. the Chevalley-Eilenberg differential on the quantum BRST
  complex induced by the quantum representation
  \begin{equation}
    \label{eq:gRepQuantumKoszul}
    \liealg{g} \ni \xi
    \longmapsto
    \boldsymbol{\rho}(\xi)
    =
    \ad(\xi)\otimes \id + \id \otimes \boldsymbol{\rho}_M(\xi),
  \end{equation}
  where $\boldsymbol{\rho}_M$ is the representation of $\liealg{g}$ on
  $\Cinfty(M)[[\lambda]]$ as in \eqref{eq:gRepQuantumAlgM}.
\item The \emph{quantized Koszul differential} $\boldsymbol{\del}
  \colon \mathcal{A}^{\bullet,\bullet}[[\lambda]] \longrightarrow
  \mathcal{A}^{\bullet ,\bullet-1}[[\lambda]]$ defined by
  \begin{equation}
    \label{eq:QuantizedKoszulDifferential}
    \boldsymbol{\del}(x \otimes f)
    =  \ins(\basis{e}^a) x \otimes f \star \boldsymbol{J}_a
    + \frac{\I \lambda}{2}
    \left(
    f^b_{ab} \ins(\basis{e}^a)
    + f^c_{ab} \basis{e}_c \wedge \ins(\basis{e}^a)\ins(\basis{e}^b)
    \right)
    \left(x\otimes f\right)
  \end{equation}
  for $x \in \Anti^\bullet (\liealg{g}_\mathbb{C}^*\oplus
  \liealg{g}_\mathbb{C})[[\lambda]]$ and $f \in
  \Cinfty(M)[[\lambda]]$. Note that the definition is independent of the basis.
\end{itemize}
By the equivariance of $\star$ we have $\boldsymbol{\rho} = \rho$ and
hence the equality $\boldsymbol{\delta}=\delta$. Moreover, as in the
classical case one has the splitting
\begin{equation}
  \label{eq:StandardBRSTsplitting}
  \boldsymbol{D}_\std
  =
  \boldsymbol{\delta} + 2 \boldsymbol{\del},
\end{equation}
see \cite[Thm.~17]{bordemann.herbig.waldmann:2000a} for further
details. The corresponding cohomology $\ker\boldsymbol{D}_\std /
\image \boldsymbol{D}_\std$ of the standard ordered quantum BRST
algebra is denoted by $\boldHBRST^{(\bullet)}(\mathcal{A}[[\lambda]])$
and called \emph{quantum BRST cohomology}.
Similarly to the classical setting, the quantum BRST cohomology is a
$\mathbb{Z}$-graded associative algebra with
$\mathbb{C}[[\lambda]]$-bilinear product $\star_\std$ induced by the
associative multiplication $\star_\std$ of the quantum BRST
algebra. One has $[a]\star_\std[b]=[a\star_\std b]$ for all
$[a],[b]\in \boldHBRST^{(\bullet)}(\mathcal{A}[[\lambda]])$ with
$\boldsymbol{D}_\std a = 0 = \boldsymbol{D}_\std b$ and $[1]\in
\boldHBRST^{(\bullet)}(\mathcal{A}[[\lambda]])$ is a unit with respect
to $\star_\std$.

Finally we recall that there exists a \emph{deformed restriction map}
\begin{equation}
  \boldsymbol{\iota^*}
  =
  \iota^* \circ S
  =
  \sum_{r=0} \lambda^r \boldsymbol{\iota^*}_r
  \colon
  \Anti^\bullet\liealg{g}^* \otimes \Cinfty(M)[[\lambda]]
  \longrightarrow
  \Anti^\bullet\liealg{g}^* \otimes \Cinfty(C)[[\lambda]],
\end{equation}
uniquely determined by the properties
\begin{align}
  \boldsymbol{\iota^*}_0
  =
  \iota^*,
  \quad
  \boldsymbol{\iota^*} \boldsymbol{\del}\at{\mathcal{A}^{\bullet,1}[[\lambda]]}
  =
  0
  \quad
  \text{and}
  \quad
  \boldsymbol{\iota^*} \prol
  =
  \id_{\Anti^\bullet\liealg{g}^* \otimes \Cinfty(C)[[\lambda]]}.
\end{align}
Here $S= \id_{\Cinfty(M)} + \sum_{r=1}^\infty \lambda^r S_r$ is a
formal series of differential linear operators of $\Cinfty(M)$ with
$S_r$ vanishing on constants. If the action of $\group{G}$ is in
addition proper on $M$ then $S$ can be chosen to be
$\group{G}$-equivariant.
Extending $\boldsymbol{\iota^*}$ by zero to the whole BRST algebra
$\mathcal{A}^{(\bullet)}[[\lambda]]$ one gets the following result, see
\cite[Prop.~26, Thm.~29, Thm.~32]{bordemann.herbig.waldmann:2000a}
for a proof and further details.

\begin{proposition}
  \label{prop:QuantumBRSTCohomology}
  Let $(M,\star,\group{G},\boldsymbol{J},C)$ be a Hamiltonian quantum
  $\group{G}$-space with regular constraint surface and proper action
  on $M$.
  \begin{enumerate2}
  \item There exists a $\group{G}$-equivariant chain homotopy
    $\widehat{\boldsymbol{h}}$ for the augmented standard ordered BRST
    operator
    \begin{equation}
      \label{eq:AugentedBRSTOperator}
      \widehat{\boldsymbol{D}}_\std = \boldsymbol{D}_\std +
      \boldsymbol{\delta}^c +2\boldsymbol{\iota^*} \in\End \left(
      \left(\Anti^\bullet\liealg{g}^*\otimes
      \Cinfty(C)[[\lambda]]\right) \oplus \mathcal{A}[[\lambda]]
      \right),
    \end{equation}
    with $\boldsymbol{\delta}^c$ being the Chevalley-Eilenberg
    differential on $\Anti^\bullet\liealg{g}^*\otimes
    \Cinfty(C)[[\lambda]]$, where all maps are defined to be zero on
    the domains on which they were previously not defined.  In
    particular, one has $\widehat{\boldsymbol{D}}_\std
    \widehat{\boldsymbol{h}}
    +\widehat{\boldsymbol{h}}\widehat{\boldsymbol{D}}_\std = 2\id$
    with $\widehat{\boldsymbol{h}} = \prol + \boldsymbol{h}$ and
    \begin{equation}
      \boldsymbol{h} \colon
      \Anti^\bullet \liealg{g}^* \otimes \Anti^\bullet  \liealg{g}
      \otimes \Cinfty(M)[[\lambda]]
      \longrightarrow
      \Anti^\bullet \liealg{g}^* \otimes\Anti^{\bullet+1}  \liealg{g}
      \otimes \Cinfty(M)[[\lambda]].
    \end{equation}
  \item The $\mathbb{C}[[\lambda]]$-linear map
    \begin{equation}
      \label{eq:QuantumIsoBRSTCE}
      \Psi \colon
      \boldHBRST^{(\bullet)}(\mathcal{A}[[\lambda]])
      \longrightarrow
      \boldHCE^\bullet(\liealg{g},\Cinfty(C)[[\lambda]])
      \cong
      \HCE^\bullet(\liealg{g},\Cinfty(C))[[\lambda]],
      \quad
	  [a]
	  \longmapsto
	      [\boldsymbol{\iota^*} a]
    \end{equation}
    is an isomorphism with inverse $
    \Psi^{-1}([c])=\left[\widehat{\boldsymbol{h}}c\right]$ for $[c]\in
    \boldHCE^\bullet(\liealg{g},\Cinfty(C)[[\lambda]])$.
  \item If the action is in addition free on $C$, then
    $\boldHBRST^{(0)}(\mathcal{A}[[\lambda]]) \cong \Cinfty(M_\red)[[\lambda]]$
    and this construction induces a star product $\star_\red$ on
    $M_\red$ via
    \begin{equation}
      \label{eq:ReducedStarProduct}
      \pi^*(u_1\star_\red u_2)
      =
      \boldsymbol{\iota^*} (\prol(\pi^*u_1)\star \prol(\pi^* u_2))
    \end{equation}
    for all $u_1,u_2\in\Cinfty(M_\red)[[\lambda]]$, the so-called
    \emph{reduced star product}.
	\end{enumerate2}
\end{proposition}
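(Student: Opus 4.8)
The plan is to deduce all three statements from the \emph{homological perturbation lemma}, treating the quantum augmented operator in \eqref{eq:AugentedBRSTOperator} as a formal deformation of its classical specialization. I would start at $\lambda = 0$, where \eqref{eq:AugentedBRSTOperator} becomes the classical augmented operator $\widehat{D} = D + \delta^c + 2\iota^*$. Because $0$ is a regular value of $J$, the Koszul differential resolves $\Cinfty(C)$: the complex $(\Anti^\bullet\liealg{g}\otimes\Cinfty(M),\del)$ is exact in positive antighost degree with degree-zero cohomology $\Cinfty(C)$ via $\iota^*$. This is the geometric core of the argument and the point where the regular-constraint-surface hypothesis is used; it yields a contracting homotopy $h_0$ with $\del h_0 + h_0\del = \id - \prol\,\iota^*$ in the relevant degrees. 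Assembling this with the prolongation gives a classical contraction $\widehat{h}_0 = \prol + h_0$ satisfying $\widehat{D}\,\widehat{h}_0 + \widehat{h}_0\,\widehat{D} = 2\id$, the factor $2$ matching the normalization $D = \delta + 2\del$ of \eqref{eq:StandardBRSTsplitting}. Properness of the action lets one choose the tubular neighbourhood, hence $\prol$ and $h_0$, to be $\group{G}$-equivariant by averaging.

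For part (i) I would write $\widehat{\boldsymbol{D}}_\std = \widehat{D} + \lambda\,\boldsymbol{P}$ with a perturbation $\boldsymbol{P}$ of order $\lambda$ and apply the homological perturbation lemma to the classical contraction $\widehat{h}_0$. The deformed homotopy is a geometric-series expression $\widehat{\boldsymbol{h}} = \widehat{h}_0\sum_{n\geq 0}(-\lambda\,\boldsymbol{P}\,\widehat{h}_0)^n$, which converges in the $\lambda$-adic topology since each power of $\lambda$ receives only finitely many contributions. The lemma then yields $\widehat{\boldsymbol{D}}_\std\widehat{\boldsymbol{h}} + \widehat{\boldsymbol{h}}\widehat{\boldsymbol{D}}_\std = 2\id$, and the splitting $\widehat{\boldsymbol{h}} = \prol + \boldsymbol{h}$ follows from the degree structure, the prolongation piece remaining undeformed. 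Equivariance is preserved throughout, since $\boldsymbol{P}$ and $\widehat{h}_0$ commute with the $\group{G}$-action.

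Part (ii) is then formal: separating the components of the homotopy identity on $\mathcal{A}[[\lambda]]$ and on $\Anti^\bullet\liealg{g}^*\otimes\Cinfty(C)[[\lambda]]$ exhibits $\boldsymbol{\iota^*}$ and $\widehat{\boldsymbol{h}}$ as mutually inverse chain homotopy equivalences between $(\mathcal{A}[[\lambda]],\boldsymbol{D}_\std)$ and $(\Anti^\bullet\liealg{g}^*\otimes\Cinfty(C)[[\lambda]],\boldsymbol{\delta}^c)$, so that $\Psi$ in \eqref{eq:QuantumIsoBRSTCE} is a well-defined isomorphism with inverse induced by $\widehat{\boldsymbol{h}}$; the final identification with $\HCE^\bullet(\liealg{g},\Cinfty(C))[[\lambda]]$ is $\lambda$-adic flatness. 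For part (iii), freeness yields the classical $\HCE^0(\liealg{g},\Cinfty(C))\cong\Cinfty(C)^{\group{G}}\cong\Cinfty(M_\red)$ as in \eqref{eq:ClassicalIsoBRSTCE}, and restricting (ii) to ghost number zero gives $\boldHBRST^{(0)}(\mathcal{A}[[\lambda]])\cong\Cinfty(M_\red)[[\lambda]]$. Transporting $\star_\std$ through $\Psi^{-1}$, which in degree zero is represented by $\prol\circ\pi^*$, and using multiplicativity of $\boldsymbol{\iota^*}$ on the invariant subalgebra yields \eqref{eq:ReducedStarProduct}; associativity and the deformation property follow since $\pi^*$ is injective and $\star_\std$ is associative.

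The main obstacle I expect lies in the first step: constructing the classical contracting homotopy from regularity of the constraint surface, and then checking that the homological perturbation lemma applies with complete control over all gradings and over $\group{G}$-equivariance. Once the equivariant classical contraction is secured, the deformation and the two corollaries reduce to careful bookkeeping.
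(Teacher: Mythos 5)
The paper does not prove this proposition itself but defers to \cite[Prop.~26, Thm.~29, Thm.~32]{bordemann.herbig.waldmann:2000a}, and your plan reconstructs essentially that argument: the Koszul resolution of $\Cinfty(C)$ coming from regularity of $0\in\liealg{g}^*$, $\group{G}$-equivariant choices of $\prol$ and the homotopy via properness, an order-by-order ($\lambda$-adic) perturbation of the classical contraction, and the formal transport of the product in ghost number zero. The one point where your packaging via the homological perturbation lemma needs more care than you indicate is that the lemma also perturbs the projection and the induced differential on the small complex, so you must still verify that these outputs coincide with the specific $\boldsymbol{\iota^*}=\iota^*\circ S$ and the undeformed $\boldsymbol{\delta}^c$ appearing in the statement (in the cited proof this is handled by the unique characterization of $\boldsymbol{\iota^*}$ and by equivariance, which makes the Chevalley--Eilenberg cross-terms with the Koszul homotopy cancel); this is bookkeeping rather than a gap in the strategy.
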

To shorten the notation we call $\mathcal{A}_\red =
\boldHBRST^{(0)}(\mathcal{A}[[\lambda]]) $ \emph{reduced quantum BRST
  algebra}.

%
%
\section{Abstract BRST Algebras and Different Types of Involutions}
\label{section:AbstractBRSTalgebras}

%
%
\subsection{Abstract BRST Algebras}
\label{subsection:AbstractBRSTalgebras}

Let $\ring{R}$ be an ordered ring with $\mathbb{Q}\subseteq \ring{R}$
and $\ring{C}=\ring{R}(\I)$ its complexification with $\I^2=-1$.
The main example is $\ring{R} = \mathbb{R}[[\lambda]]$, see
\cite{bordemann.waldmann:1998a,bursztyn.waldmann:2001a,bursztyn.waldmann:2005b}
for a detailed discussion on *-representations and the GNS construction in this
abstract setting.
In the following, $\mathcal{A}$ denotes a $\mathbb{Z}_2$-graded
associative algebra over $\ring{C}$ and $\ad(a)=[a,\argument]$ the super
commutator with respect to the $\mathbb{Z}_2$-grading.

\begin{definition}[BRST algebra]
  \label{defi:AbstractBRSTalgebra}
  Let $\mathcal{A}=\mathcal{A}_0 \oplus \mathcal{A}_1$ be a
  $\mathbb{Z}_2$-graded associative algebra over
  $\ring{C}=\ring{R}(\I)$.
  \begin{enumerate2}
   \item An even element $\gamma \in \mathcal{A}_0$ such that the
     inner derivation $\Gh = \ad(\gamma)=[\gamma,\argument]$ induces a
     $\mathbb{Z}$-grading on $\mathcal{A}$ by
         \begin{equation}
           \mathcal{A}^{(\bullet)}
	   =
	   \bigoplus_{k\in \mathbb{Z}} \mathcal{A}^{(k)}
           \quad
	   \text{with}
	   \quad
           \mathcal{A}^{(k)}
	   =
	   \{a\in\mathcal{A}\mid \Gh a = ka \}
         \end{equation}
         is called \emph{ghost charge}. The operator $\Gh$ is called
         \emph{ghost number operator} and the induced grading is
         called \emph{ghost number grading}.

       \item An odd element $\Theta$ with ghost number $+1$ and square
         zero, i.e.
	 \begin{equation}
	   \Theta \in \mathcal{A}^{(1)}_1
	   \quad
	   \text{and}
	   \quad
	   \Theta^2
	   =
	   0,
	 \end{equation}
         is called \emph{BRST charge}. The corresponding inner
         derivation $D =\ad(\Theta)$ is called \emph{BRST operator}.
  \end{enumerate2}
  The triple $(\mathcal{A},\gamma,\Theta)$ is then called \emph{BRST
    algebra} over $\ring{C}$. A \emph{morphism} $\Phi\colon
  (\mathcal{A},\gamma,\Theta) \longrightarrow
  (\mathcal{A}',\gamma',\Theta')$ of BRST algebras is an even morphism
  of $\mathbb{Z}_2$-graded associative algebras
  $\Phi\colon\mathcal{A}\longrightarrow \mathcal{A}'$ with
  \begin{equation}
    \label{eq:MorphismBRSTAlgebras}
    \Phi(\gamma)
	  =
		\gamma'
    \quad
	  \text{and}
	  \quad
    \Phi(\Theta)
	  =
	  \Theta',
  \end{equation}
  and the category of BRST algebras is denoted by $\BRSTalgebras$.
\end{definition}

Note that the properties imply that $\Phi$ preserves the
$\mathbb{Z}$-grading as well. We often
encounter the setting that the $\mathbb{Z}_2$-grading is induced by
the $\mathbb{Z}$-grading. In addition,
$D = \ad(\Theta) \colon \mathcal{A}^{(\bullet)}
\longrightarrow \mathcal{A}^{(\bullet +1)}$ and $D^2=0$ imply that
the BRST operator is a coboundary operator, thus it defines a cohomology:

\begin{definition}[BRST cohomology]
  \label{definition:AbstractBRSTCohomology}
  Let $(\mathcal{A},\gamma,\Theta)$ be a BRST algebra. Then
  \begin{equation}
    \HBRST^{(\bullet)} (\mathcal{A})
	  =
	  \bigoplus_{k\in\mathbb{Z}} \HBRST^{(k)} (\mathcal{A})
    \quad
	  \text{with}
	  \quad
    \HBRST^{(k)} (\mathcal{A})
    =
	  \frac{\ker D\at{\mathcal{A}^{(k)}}}
	     {\image D\at{\mathcal{A}^{(k-1)}}}
  \end{equation}
  is called \emph{BRST cohomology}  of $\mathcal{A}$.
  The \emph{reduced BRST algebra} is defined by
  \begin{equation}
    \label{eq:AbstractReducedBRSTAlgebra}
    \mathcal{A}_\red
	  =
	  \mathrm{H}^{(0)}_{\BRST,0}(\mathcal{A}).
  \end{equation}
\end{definition}
Since $D$ is an odd inner derivation, the cohomology is again a
$\mathbb{Z}\times\mathbb{Z}_2$-graded associative algebra and
$\mathcal{A}_\red$ is a well-defined associative subalgebra.  The
ghost number operator acts on $\HBRST^{(\bullet)} (\mathcal{A})$ via
\begin{equation}
  \Gh_\BRST [a]
  =
  [\Gh a],
\end{equation}
i.e. $\HBRST^{(k)} (\mathcal{A})= \{[a]\in \HBRST^{(\bullet)} (\mathcal{A}) \mid
\Gh_\BRST[a] = k[a]\} $. However, it is no longer an
inner derivation as $\gamma$ is no cocycle
\begin{equation}
  D \gamma
  =
  [\Theta,\gamma]
  =
  - [\gamma,\Theta]
  =
  - \Theta.
\end{equation}

If the $\mathbb{Z}_2$-grading is induced by the $\mathbb{Z}$-grading,
then we have $\mathcal{A}_\red = \HBRST^{(0)}(\mathcal{A})$ as in the
case of the quantum BRST cohomology. A straightforward computation
shows that the assignment of a BRST algebra
$(\mathcal{A},\gamma,\Theta)$ to its BRST cohomology
$\HBRST^{(\bullet)}(\mathcal{A})$ and reduced BRST algebra
$\mathcal{A}_\red$ is a functor from $\BRSTalgebras$ into the category
of $\mathbb{Z}\times \mathbb{Z}_2$-graded algebras resp. algebras.

Let us consider a *-involution for $\mathcal{A}$.
Since we aim to get an induced involution on $\mathcal{A}_\red=
\mathrm{H}^{(0)}_{\BRST,0}(\mathcal{A})$, the involution on the whole
of $\mathcal{A}$ should respect the $\mathbb{Z}_2$-grading. We have
two main possibilities for involutions on a $\mathbb{Z}_2$-graded
algebra $\mathcal{A}$:
\begin{itemize}
  \item \emph{Graded *-involutions} $I\colon \mathcal{A}
	      \longrightarrow \mathcal{A}$, i.e. $\ring{C}$-antilinear
		    involutive even maps with
        \begin{equation}
		      \label{eq:DefiGradedInvolution}
		      I(ab)
		      =
		      I(b)I(a)
        \end{equation}
        for all $a,b\in \mathcal{A}$. The pair $(\mathcal{A},I)$ is called
		    \emph{graded *-algebra}.

  \item \emph{Super *-involutions} $S\colon \mathcal{A}
	      \longrightarrow \mathcal{A}$, i.e. $\ring{C}$-antilinear
		    involutive even maps with
        \begin{equation}
		      \label{eq:DefiSuperInvolution}
		      S(ab)
		      =
		      (-1)^{\abs{a}\abs{b}}  S(b) S(a)
        \end{equation}
        for all homogeneous elements $a,b\in \mathcal{A}$ with
		    degrees $\abs{a},\abs{b}$. The pair $(\mathcal{A},S)$
		    is called \emph{super *-algebra}.
\end{itemize}
A short computation shows that the graded resp. super *-involutions of
the adjoint representations give a minus sign.  This motivates the
following rescaling: From now on $\gamma\in\mathcal{A}^{(0)}_0$ and
$\Theta\in\mathcal{A}^{(1)}_1$ are the elements such that
\begin{equation}
  \label{eq:RescaledGhostBRST}
  \Gh
  =
  \I\ad(\gamma)
  \quad
  \text{and}
  \quad
  D
  =
  \I \ad(\Theta).
\end{equation}
Note that the normalization does not change the cohomology of $D$ as
well as the grading induced by $\Gh$ and that in the case of the
quantum BRST algebra we already have a corresponding factor
$\frac{1}{\I }$ in front of the super commutator.

One can show that the notion of super and graded *-involutions can be
mutually exchanged by rescaling the odd component of the involution
by $\pm \I$. Thus it only remains to investigate possible
compatibilities of involutions with the ghost number grading. As we
ultimately want an induced *-involution on the even ghost number zero
part of the BRST cohomology, the ghost number zero part should be
invariant under the involution, too. There are again \emph{two main
possibilities}:
An involution that leaves the ghost number grading invariant, or
an involution that inverts the ghost number grading.

\begin{remark}[Involution leaving ghost number invariant]
  A super *-involution that leaves the ghost number degree invariant and
  with Hermitian BRST charge $\Theta^* = \Theta$ induces a super
	*-involution on the cohomology and a *-involution on $\mathcal{A}_\red$
	in a functorial way. However, this kind of involution has a big
	disadvantage in connection with *-representations $\pi$ on pre-Hilbert
	spaces over $\ring{C}$, see \cite{bursztyn.waldmann:2001a}:
	In this case $\pi(\Theta)=0$ would vanish.
	The induced inner product on the physical space is in general still
	not positive definite, which leads to so-called no ghost theorems,
	compare e.g. \cite[Sect.~14.2]{henneaux.teitelboim:1992a}.
\end{remark}

 The above remark is a consequence of a more general problem:

\begin{remark}
  The theory of homological algebra is not compatible with star involutions resp.
  with the positive definiteness of inner products. In the case of three
	or more dimensions there are no canonically induced inner products on
	the cohomology as the following simple example shows:
	Consider $\mathbb{R}^3$ with Euclidean scalar product and
	differential
	\begin{equation*}
	  \D
		=
		\left(
		\begin{smallmatrix}
		  0 & 0 & 1 \\
			0 & 0 & 0 \\
			0 & 0 & 0
		\end{smallmatrix}
		\right).
	\end{equation*}
	Then $\ker \D / \image \D \cong \Span\{ ( 0 \; 1 \; 0)^\top\}$ and
	the quotient map is not compatible with the inner product.
\end{remark}

We are mainly interested in non-trivial *-representations of the
reduced quantum BRST algebra with involution, where one of the motivations
consists in implementing the superposition principle.
Therefore, the above lack of positivity leads us to the study of
other possibilities for involutions $^*$ on the BRST algebra
$\mathcal{A}$ such that $D$ and $\Theta$ are not Hermitian,
i.e.  $D \neq D^*$ and $\Theta \neq \Theta^*$.

%
%
\subsection{Graded *-Involution with Imaginary Ghost Operator}
\label{subsection:GradedInvolutionwithImaginaryGhost}

Consider a BRST algebra $(\mathcal{A},\gamma,\Theta)$ which has an
additional graded *-involution $a\mapsto a^*$. Since super and graded
*-involutions can be mutually exchanged, this is only a matter of
convenience and no relevant choice.

\begin{definition}[BRST *-algebra with imaginary ghost operator]
	\label{defi:BRSTstarAlgebrawithImaginaryGhost}
	Let $\mathcal{A}$ be a BRST algebra together with a graded
	*-involution $^*$ satisfying $\Gh^* =\I \ad(\gamma^*)= - \Gh$. Then one calls
	$(\mathcal{A},\gamma,\Theta,^*)$ \emph{BRST *-algebra with
	imaginary ghost operator}. A \emph{morphism}
	$\Phi\colon (\mathcal{A},^*)\longrightarrow (\mathcal{B},^*)$
	of BRST *-algebras with imaginary ghost 	operators is a
	morphism $\Phi\colon\mathcal{A}\longrightarrow\mathcal{B}$ of
	BRST algebras that fulfils
	\begin{equation}
	  \Phi(a^*)
	  =
	  \Phi(a)^*
	\end{equation}
	for all $a\in \mathcal{A}$. The corresponding category of BRST
        *-algebras with imaginary ghost operators is denoted by
        $\iBRSTalg$.
\end{definition}

Since the graded *-involution is compatible with the
$\mathbb{Z}_2$-grading and since it inverts the ghost number grading,
we directly see that $\mathcal{A}^{(0)}$ becomes a
$\mathbb{Z}_2$-graded *-subalgebra of $\mathcal{A}$.  Similarly,
$\mathcal{A}^{(0)}_0$ becomes a *-subalgebra of $\mathcal{A}$.
Moreover, we obtain the following behaviour of the ghost charge and
the BRST operator under the graded *-involution.

\begin{lemma}
  Let $(\mathcal{A},^*)$ be a BRST algebra with imaginary ghost
  operator.
	\begin{enumerate2}
	\item There exists a unique central Hermitian
        element $c\in \mathcal{A}^{(0)}_0$ such that one has
        \begin{equation}
          \gamma^*
	        =
	        -\gamma +c.
        \end{equation}

	\item Define the \emph{adjoint BRST operator} $D^* \colon
	      \mathcal{A}^{(\bullet)}\longrightarrow \mathcal{A}^{(\bullet-1)}$
        by $D^*=\I\ad(\Theta^*)$. Then one has $(D^*)^2=0$ and
        \begin{equation}
          \label{eq:Dstara}
          D^* a
	        =
	        (-1)^{\abs{a}}(Da^*)^*
        \end{equation}
        for homogeneous $a\in \mathcal{A}$ with degree $\abs{a}$.
	\end{enumerate2}
\end{lemma}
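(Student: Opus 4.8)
The plan is to handle the two assertions separately: in (i) the central element is read off directly from the relation $\Gh^* = -\Gh$, and in (ii) the intertwining formula is obtained by a super-commutator computation, from which $(D^*)^2 = 0$ then drops out.

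For part (i), I would set $c := \gamma + \gamma^*$, so that the relation $\gamma^* = -\gamma + c$ holds by definition; this simultaneously establishes uniqueness, since the displayed equation forces $c = \gamma + \gamma^*$. Three properties then remain. First, centrality: from $\Gh = \I\ad(\gamma)$ and $\Gh^* = \I\ad(\gamma^*)$, the hypothesis $\Gh^* = -\Gh$ gives $\ad(\gamma^*) = -\ad(\gamma)$, hence $\ad(c) = \ad(\gamma) + \ad(\gamma^*) = 0$ and $c$ is central. Second, Hermiticity: by additivity and involutivity of the $^*$-involution, $c^* = \gamma^* + (\gamma^*)^* = \gamma^* + \gamma = c$. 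Third, the bidegree: since $^*$ is even we have $\gamma^* \in \mathcal{A}_0$, and the relation $\Gh^* = -\Gh$ forces $^*$ to send $\mathcal{A}^{(k)}$ into $\mathcal{A}^{(-k)}$, so $\gamma \in \mathcal{A}^{(0)}$ yields $\gamma^* \in \mathcal{A}^{(0)}$ and therefore $c \in \mathcal{A}^{(0)}_0$. The one non-formal point here is the ghost-degree inversion: I would deduce it from the defining relation by applying $^*$ to $\I\ad(\gamma)a = \Gh a = ka$ and using $\ad(\gamma^*) = -\ad(\gamma)$ to conclude $\Gh(a^*) = -ka^*$.

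For part (ii), I would prove the intertwining formula first and derive nilpotency from it. The formula is a direct calculation on homogeneous $a$: expanding $Da^* = \I\ad(\Theta)(a^*) = \I\bigl(\Theta a^* - (-1)^{\abs{a}} a^*\Theta\bigr)$ via oddness of $\Theta$ and $\abs{a^*} = \abs{a}$, applying the graded involution so that $(\Theta a^*)^* = a\Theta^*$ and $(a^*\Theta)^* = \Theta^* a$, and multiplying by $(-1)^{\abs{a}}$ yields $(-1)^{\abs{a}}(Da^*)^* = \I\bigl(\Theta^* a - (-1)^{\abs{a}} a\Theta^*\bigr) = \I\ad(\Theta^*)(a) = D^* a$, which is precisely \eqref{eq:Dstara}. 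I expect the sign bookkeeping to be the only real obstacle: the antilinearity of $^*$ converts $\I$ into $-\I$, and this has to be reconciled against both the explicit factor $(-1)^{\abs{a}}$ and the super-commutator signs, so I would carry every sign explicitly on homogeneous elements.

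Finally, $(D^*)^2 = 0$ follows cheaply from the formula together with $D^2 = 0$. Applying $^*$ to \eqref{eq:Dstara} gives $(D^* a)^* = (-1)^{\abs{a}} D a^*$, so $D\bigl((D^*a)^*\bigr) = (-1)^{\abs{a}} D^2 a^* = 0$; since $D^*$ flips the $\mathbb{Z}_2$-degree, a second application of the formula gives $(D^*)^2 a = D^*(D^* a) = (-1)^{\abs{a}+1}\bigl(D(D^*a)^*\bigr)^* = 0$. Alternatively, one may observe that $(\Theta^*)^2 = (\Theta^2)^* = 0$ and rerun the standard argument ($\ad(x)^2 = 0$ for an odd $x$ with $x^2 = 0$) that already gave $D^2 = 0$; I prefer the first route since it reuses part (ii) verbatim.
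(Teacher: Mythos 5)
Your proposal is correct and follows essentially the same route as the paper: you set $c = \gamma + \gamma^*$, derive centrality from $\ad(\gamma^*) = -\ad(\gamma)$, and obtain \eqref{eq:Dstara} by the direct super-commutator computation that the paper dismisses as ``a short computation''. The only difference is that you spell out the sign bookkeeping, the ghost-degree inversion of the involution, and the derivation of $(D^*)^2=0$ explicitly, all of which check out.
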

\begin{proof}
  Concerning the first point we have $ -\I\ad(\gamma) = \I
  \ad(\gamma^*)$ and thus $\gamma^* + \gamma$ is in the center of
  $\mathcal{A}$ as well as $\gamma,\gamma^* \in \mathcal{A}^{(0)}_0$,
  hence the statement is shown.  For the second point note that
  $\Theta\in\mathcal{A}^{(1)}_1$, so $\Theta^* \in
  \mathcal{A}^{(-1)}_1$, and \eqref{eq:Dstara} follows from a short
  computation.
\end{proof}

The element $\Delta = \Theta\Theta^* + \Theta^*\Theta = \Delta^*
 \in \mathcal{A}^{(0)}_0$ is called \emph{Laplacian} and will play
an important role in the representation theory.
It follows that $\Theta$ and $\Theta^*$ are either linearly
independent or both equal to zero as $\mathcal{A}^{(1)} \cap
\mathcal{A}^{(-1)} = \{0\}$.  Thus the kernel of $D$ is no
*-subalgebra of $\mathcal{A}$ and there is no obvious way to obtain a
*-structure on the BRST cohomology $\HBRST^{(\bullet)}(\mathcal{A}) =
\ker D/\image D$ of $\mathcal{A}$ or at least on the reduced
algebra. Therefore, the idea is to define a new quotient $(\ker D\cap
\ker D^*) / (\image D\cap \image D^*)$ and to show that this
construction yields a well-defined
$\mathbb{Z}\times\mathbb{Z}_2$-graded algebra with graded
*-involution.  To this end we investigate the relation between the
*-involution and the elements in $\ker D\cap \ker D^*$ and $\image D
\cap \image D^*$.

\begin{lemma}
  \label{lemma:SectionImagesStarSubideal}
  Let $(\mathcal{A},^*)$ be a BRST *-algebra with imaginary ghost
  operator. Then one has for all $a\in\mathcal{A}$:
	\begin{enumerate2}
		\item $ a\in \ker D\cap \ker D^*
		      \quad
				  \Longleftrightarrow
				  \quad
	        a,a^* \in \ker D
				  \quad
				  \Longleftrightarrow
				  \quad
		      a,a^* \in \ker D^*. $

		\item $	a\in \image  D\cap \image D^*
		      \quad \; \,
				  \Longleftrightarrow
				  \quad
		      a,a^* \in \image D
				  \quad \;
				  \Longleftrightarrow
				  \quad
		      a,a^* \in \image D^*$.
	\end{enumerate2}
  Consequently, the intersection $\ker D\cap \ker D^*$ is a
  $\mathbb{Z}\times\mathbb{Z}_2$-graded *-subalgebra of $\mathcal{A}$
  and the set $\image D\cap \image D^* \subseteq \ker D\cap \ker D^*$
  is a $\mathbb{Z}\times\mathbb{Z}_2$-graded *-ideal therein.
\end{lemma}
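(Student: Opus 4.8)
The plan is to reduce everything to the single operator identity already packaged in \eqref{eq:Dstara}. Reading that relation, for homogeneous $a$ one has $D^* a = (-1)^{\abs{a}}(D a^*)^*$, which says that $D^*$ agrees with the conjugate of $D$ by the involution up to the sign $(-1)^{\abs{a}}$. Since $a \mapsto a^*$ is an antilinear bijection and a sign $\pm 1$ does not affect vanishing, I would first record the two facts that drive the whole lemma: $a \in \ker D^* \iff a^* \in \ker D$ and $a \in \image D^* \iff a^* \in \image D$. The kernel statement is immediate from \eqref{eq:Dstara} and injectivity of $^*$. For the image statement the forward direction writes a preimage $a = D^*b$, applies $^*$, and uses involutivity to land $a^*$ in $\image D$; the backward direction applies \eqref{eq:Dstara} to $c^*$ to turn $(Dc)^*$ into $D^*$ of something. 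For inhomogeneous $a$ I would split into even and odd parts and note that $D$ and $D^*$ are odd while $^*$ is even, so $\ker D$, $\ker D^*$, $\image D$, $\image D^*$ are all $\mathbb{Z}\times\mathbb{Z}_2$-graded subspaces and the equivalences survive passage to graded components. In short, $^*$ interchanges $\ker D \leftrightarrow \ker D^*$ and $\image D \leftrightarrow \image D^*$.

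With these in hand the two chains of equivalences are pure bookkeeping. For the first point, $a \in \ker D \cap \ker D^*$ means $a \in \ker D$ and $a \in \ker D^*$; rewriting the second condition as $a^* \in \ker D$ yields $a, a^* \in \ker D$, while rewriting instead $a \in \ker D$ as $a^* \in \ker D^*$ yields $a, a^* \in \ker D^*$. The second point is identical with $\ker$ replaced by $\image$, using $a \in \image D^* \iff a^* \in \image D$.

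Finally I would assemble the structural claims. The inclusion $\image D \cap \image D^* \subseteq \ker D \cap \ker D^*$ is immediate from $D^2 = 0$ and $(D^*)^2 = 0$, which give $\image D \subseteq \ker D$ and $\image D^* \subseteq \ker D^*$. Since $D = \I\ad(\Theta)$ and $D^* = \I\ad(\Theta^*)$ are super-derivations, their kernels are subalgebras, hence so is $\ker D \cap \ker D^*$; it is $^*$-closed because the first point places $a^*$ in it whenever $a$ is, and graded as an intersection of graded subspaces. For the ideal property I would take $a \in \image D \cap \image D^*$ with $a = D\alpha = D^*\alpha'$ and $b \in \ker D \cap \ker D^*$, and use the super-Leibniz rule together with $Db = D^*b = 0$ to express $ab$ as $D(\alpha b)$ and as $D^*(\alpha' b)$, and $ba$ as $(-1)^{\abs{b}}D(b\alpha)$ and $(-1)^{\abs{b}}D^*(b\alpha')$; this places both products in $\image D \cap \image D^*$, and $^*$-closedness follows from the second point.

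The only genuine subtlety, and hence the step I would treat most carefully, is the sign and grading bookkeeping when passing from the homogeneous identity \eqref{eq:Dstara} to arbitrary elements and to images: one must check that $^*$ preserves parity while $D$ and $D^*$ reverse it, so that the four subspaces decompose as graded subspaces and the equivalences persist on even and odd components. Everything else is a direct consequence of $^*$ being an involutive graded antiautomorphism and of $D$, $D^*$ being square-zero super-derivations.
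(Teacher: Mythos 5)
Your proposal is correct and follows essentially the same route as the paper: the equivalences are deduced from the identity \eqref{eq:Dstara} (which shows $^*$ interchanges $\ker D$ with $\ker D^*$ and $\image D$ with $\image D^*$), and the ideal property comes from the super-Leibniz rule applied to a product of a cocycle with an element of $\image D\cap\image D^*$, exactly as in the paper's computation $a\,De=(-1)^{\abs{a}}D(ae)=(-1)^{\abs{a}}D^*(af)$. You merely spell out the details (the graded splitting and the image case of \eqref{eq:Dstara}) that the paper leaves implicit.
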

\begin{proof}
  The first two parts follow directly with \eqref{eq:Dstara}. In
  addition, we have for all homogeneous elements $a\in \ker D\cap \ker
  D^*$ and $De = D^* f \in \image D\cap \image D^*$
  \begin{align*}
    a De
    =
    (-1)^{\abs{a}} D(ae)
    =
    (-1)^{\abs{a}} D^*(af),
  \end{align*}
  thus $\image D\cap \image D^*$ is a *-ideal in $\ker D\cap \ker
  D^*$.
\end{proof}

Hence we know that $(\ker D\cap \ker D^*) / (\image D\cap \image D^*)$
becomes a $\mathbb{Z}\times \mathbb{Z}_2$-graded algebra as well.

\begin{definition}[Reduced BRST *-algebra]
  Let $(\mathcal{A},^*)$ be a BRST *-algebra with imaginary ghost operator.
  The \emph{BRST quotient} is defined by
  \begin{equation}
    \label{eq:BRSTQuotient}
    \HBRSTtilde^{(\bullet)}(\mathcal{A})
    =
    \frac{\ker D\cap \ker D^*}{\image D\cap \image D^*},
  \end{equation}
  and by
  \begin{equation}
    \label{eq:ReducedBRSTstarAlgebra}
    \widetilde{\mathcal{A}}_\red
    =
    \widetilde{\mathrm{H}}^{(0)}_{\BRST,0}(\mathcal{A})
  \end{equation}
  one denotes the corresponding \emph{reduced BRST *-algebra}.
\end{definition}
Note that $\HBRSTtilde^{(\bullet)}(\mathcal{A})$ can in general
\emph{not} be expressed as cohomology of some cohomological chain
complex since it is only a quotient of an algebra with an ideal.
Nonetheless, we sometimes call it cohomology in analogy to
$\HBRST^{(\bullet)}(\mathcal{A})$ and to simplify the notation.
We have the following result.

\begin{lemma}
  \label{lemma:ReducedBRSTStarAlgebra}
  The BRST quotient $\HBRSTtilde^{(\bullet)}(\mathcal{A})$ is a
  $\mathbb{Z}\times \mathbb{Z}_2$-graded algebra with
  graded *-involution $^*$ defined by
  \begin{equation}
    [a]^*
    =
    [a^*],
  \end{equation}
  exchanging $\HBRSTtilde^{(k)}(\mathcal{A})$ with
  $\HBRSTtilde^{(-k)}(\mathcal{A})$ for all $k\in\mathbb{Z}$.
  The reduced BRST *-algebra $\widetilde{\mathcal{A}}_\red $
  is a *-algebra.
\end{lemma}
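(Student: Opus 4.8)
The plan is to transport the graded *-involution of $\mathcal{A}$ to the quotient $\HBRSTtilde^{(\bullet)}(\mathcal{A}) = (\ker D \cap \ker D^*)/(\image D \cap \image D^*)$ by setting $[a]^* = [a^*]$; the underlying $\mathbb{Z}\times\mathbb{Z}_2$-graded algebra structure is already available from the discussion preceding the statement, so the entire task reduces to checking that this assignment is well-defined and then verifying the axioms of a graded *-involution.

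First I would establish well-definedness, which is the only point that genuinely uses new input. By Lemma~\ref{lemma:SectionImagesStarSubideal} the numerator $\ker D \cap \ker D^*$ is a $\mathbb{Z}\times\mathbb{Z}_2$-graded *-subalgebra, so for a representative $a \in \ker D \cap \ker D^*$ the element $a^*$ again lies in $\ker D \cap \ker D^*$ and $[a^*]$ is a legitimate class. Independence of the chosen representative follows from the same lemma, which asserts that the denominator $\image D \cap \image D^*$ is a *-ideal: if $a - a'$ lies in $\image D \cap \image D^*$, then so does $(a - a')^* = a^* - (a')^*$, whence $[a^*] = [(a')^*]$.

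The remaining properties are a formal transfer through the quotient map. Antilinearity and evenness of $^*$ descend directly; involutivity is $([a]^*)^* = [(a^*)^*] = [a]$; and graded antimultiplicativity reads $([a][b])^* = [(ab)^*] = [b^*a^*] = [b]^*[a]^*$, using $(ab)^* = b^*a^*$ on $\mathcal{A}$. For the behaviour on the ghost number grading I would invoke the imaginary-ghost condition $\Gh^* = -\Gh$: writing $\Gh = \I\ad(\gamma)$ and using $(\gamma a - a\gamma)^* = a^*\gamma^* - \gamma^*a^*$ gives $(\Gh a)^* = -\Gh(a^*)$, so $a \in \mathcal{A}^{(k)}$ forces $a^* \in \mathcal{A}^{(-k)}$, and therefore $^*$ carries $\HBRSTtilde^{(k)}(\mathcal{A})$ onto $\HBRSTtilde^{(-k)}(\mathcal{A})$ bijectively. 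Finally, since $^*$ is even and sends ghost number $0$ to ghost number $-0 = 0$, it restricts to the even ghost-number-zero component $\widetilde{\mathcal{A}}_\red$; there all $\mathbb{Z}_2$-degrees vanish, so graded antimultiplicativity becomes ordinary antimultiplicativity and $\widetilde{\mathcal{A}}_\red$ is a genuine *-algebra. I expect no real obstacle beyond the well-definedness step, which rests squarely on the two structural assertions of Lemma~\ref{lemma:SectionImagesStarSubideal}; everything else is a routine check that the involution axioms survive passage to the quotient.
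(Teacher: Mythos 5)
Your argument is correct and follows the same route the paper intends: the paper's proof is a one-line appeal to Lemma~\ref{lemma:SectionImagesStarSubideal} (numerator a graded *-subalgebra, denominator a graded *-ideal) together with the compatibility of $^*$ with the gradings, which is exactly what you spell out. Your additional details — well-definedness via the *-ideal property, the computation $(\Gh a)^* = -\Gh(a^*)$ forcing the exchange of $\HBRSTtilde^{(k)}$ with $\HBRSTtilde^{(-k)}$, and the reduction of graded antimultiplicativity to ordinary antimultiplicativity on the even ghost-number-zero part — are all accurate.
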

\begin{proof}
  The properties follow directly by the above results and the
  compatibility of the *-involution with the grading.
\end{proof}

Just as for BRST algebras one shows that the passages from
a BRST *-algebra with imaginary ghost operator to its BRST quotient
and reduced BRST *-algebra are functorial:

\begin{proposition}
  The assignment of a BRST *-algebra with imaginary ghost operator
  $(\mathcal{A},\gamma,\Theta,^*)$ to the BRST quotient
  $\HBRSTtilde^{(\bullet)}(\mathcal{A})$ is a functor into the category
  of $\mathbb{Z}\times\mathbb{Z}_2$-graded algebras with graded
  *-involution. Similarly, the assignment to the reduced BRST
  *-algebra $\widetilde{\mathcal{A}}_\red$ is a functor into the
  category of *-algebras.
\end{proposition}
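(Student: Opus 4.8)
The plan is to verify the two data defining a functor: that the object assignment is well defined---which is exactly the content of Lemma~\ref{lemma:ReducedBRSTStarAlgebra}---and that every morphism $\Phi\colon (\mathcal{A},\gamma,\Theta,{}^*)\longrightarrow (\mathcal{B},\gamma',\Theta',{}^*)$ in $\iBRSTalg$ induces a morphism $\widetilde{\Phi}$ on the BRST quotients, compatibly with identities and composition. First I would record that, being an even algebra morphism with $\Phi(\gamma)=\gamma'$ and $\Phi(\Theta)=\Theta'$, the map $\Phi$ preserves both the $\mathbb{Z}_2$- and the ghost number grading and intertwines the BRST operators: from $D=\I\ad(\Theta)$ one computes $\Phi(Da)=\I[\Phi(\Theta),\Phi(a)]=D'\Phi(a)$. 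Applying the involution compatibility $\Phi(a^*)=\Phi(a)^*$ to $a=\Theta$ gives $\Phi(\Theta^*)=(\Theta')^*$, so the identical computation with $D^*=\I\ad(\Theta^*)$ yields $\Phi\circ D^*=(D')^*\circ\Phi$.

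These two intertwining relations are the heart of the argument. They immediately give $\Phi(\ker D\cap \ker D^*)\subseteq \ker D'\cap \ker (D')^*$ and, writing an element of $\image D\cap \image D^*$ as $De=D^*f$, show that $\Phi(De)=D'\Phi(e)$ and $\Phi(D^*f)=(D')^*\Phi(f)$ agree and lie in $\image D'\cap \image (D')^*$. Hence $\Phi$ descends to a well-defined $\ring{C}$-linear map $\widetilde{\Phi}\colon \HBRSTtilde^{(\bullet)}(\mathcal{A})\longrightarrow \HBRSTtilde^{(\bullet)}(\mathcal{B})$, $[a]\mapsto[\Phi(a)]$.

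It then remains to check that $\widetilde{\Phi}$ is a morphism in the target category and that the assignment is functorial. Multiplicativity $\widetilde{\Phi}([a][b])=\widetilde{\Phi}([a])\widetilde{\Phi}([b])$ follows from the multiplicativity of $\Phi$ together with the fact that the product on the quotient is the induced one; preservation of the $\mathbb{Z}\times\mathbb{Z}_2$-grading follows from the grading compatibility of $\Phi$; and compatibility with the graded *-involution of Lemma~\ref{lemma:ReducedBRSTStarAlgebra} follows from $\widetilde{\Phi}([a]^*)=[\Phi(a^*)]=[\Phi(a)^*]=\widetilde{\Phi}([a])^*$. The functor axioms---that the identity morphism is sent to the identity and that $\widetilde{\Psi\circ\Phi}=\widetilde{\Psi}\circ\widetilde{\Phi}$---are immediate from the formula $[a]\mapsto[\Phi(a)]$. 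Finally, since $\Phi$ preserves both gradings, $\widetilde{\Phi}$ restricts to the ghost number zero even part and yields a *-algebra morphism $\widetilde{\mathcal{A}}_\red\longrightarrow\widetilde{\mathcal{B}}_\red$, which gives the second functor.

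I expect no serious obstacle here: once the intertwining $\Phi\circ D^*=(D')^*\circ\Phi$ is in place, the rest is routine diagram chasing on the quotient. The single point deserving care is precisely this intertwining of the \emph{adjoint} operators, which relies on the involution compatibility of the morphism (through $\Phi(\Theta^*)=(\Theta')^*$) and not merely on the BRST-algebra conditions; without the requirement $\Phi(a^*)=\Phi(a)^*$ built into the definition of $\iBRSTalg$, the assignment would fail to respect the very structure that defines the quotient.
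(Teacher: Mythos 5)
Your proof is correct and follows exactly the route the paper leaves implicit: the paper states this proposition without proof, treating it as the same routine verification as for plain BRST algebras, and your write-up is that verification. The one point genuinely worth isolating — that the condition $\Phi(a^*)=\Phi(a)^*$ forces $\Phi(\Theta^*)=(\Theta')^*$ and hence the intertwining $\Phi\circ D^*=(D')^*\circ\Phi$ of the adjoint operators, without which $\Phi$ would not respect $\ker D\cap\ker D^*$ and $\image D\cap\image D^*$ — is exactly the point you identify, so there is nothing to add.
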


Finally, we can prove that there is the following crucial relation between
$\HBRSTtilde^{(\bullet)}(\mathcal{A})$ and
$\HBRST^{(\bullet)}(\mathcal{A})$.

\begin{proposition}
  \label{prop:InclBRSTQuotienttoCohomology}
  The map
  \begin{equation}
    I_\mathcal{A} \colon
    \HBRSTtilde^{(\bullet)}(\mathcal{A})
    \longrightarrow
    \HBRST^{(\bullet)}(\mathcal{A}),
    \quad
    [a]
    \longmapsto
    I_\mathcal{A}([a]) = [a]
  \end{equation}
  is a well-defined  morphism of
  $\mathbb{Z}\times\mathbb{Z}_2$-graded algebras.
\end{proposition}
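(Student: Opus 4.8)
The plan is to recognize that $I_\mathcal{A}$ is induced by the identity on representatives, so that both its well-definedness and the morphism property reduce to two elementary inclusions already implicit in Lemma~\ref{lemma:SectionImagesStarSubideal}. No genuine computation is required; the entire content is bookkeeping about nested kernels and images.

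First I would establish well-definedness from the two evident inclusions
\begin{equation*}
  \ker D\cap \ker D^* \subseteq \ker D
  \qquad\text{and}\qquad
  \image D\cap \image D^* \subseteq \image D.
\end{equation*}
The first inclusion guarantees that every representative $a$ of a class in $\HBRSTtilde^{(\bullet)}(\mathcal{A})$ is in particular a $D$-cocycle, so that $[a]$ genuinely defines a class in $\HBRST^{(\bullet)}(\mathcal{A})$. The second inclusion guarantees independence of the representative: if $[a]=[a']$ in the BRST quotient, then $a-a'\in\image D\cap\image D^*\subseteq\image D$, whence $[a]=[a']$ in the BRST cohomology. Both inclusions are immediate, since an intersection is contained in each of its factors.

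Next I would verify the morphism property. Since $I_\mathcal{A}$ leaves representatives unchanged, it is automatically $\ring{C}$-linear and unital with $I_\mathcal{A}([1])=[1]$. Multiplicativity follows because the products on both quotients are induced by the same associative product of $\mathcal{A}$ via $[a][b]=[ab]$, so that
\begin{equation*}
  I_\mathcal{A}([a][b])
  =
  [ab]
  =
  [a][b]
  =
  I_\mathcal{A}([a])\, I_\mathcal{A}([b]).
\end{equation*}
Finally, since both the ghost number grading and the $\mathbb{Z}_2$-grading on each quotient are inherited from $\mathcal{A}$ and $I_\mathcal{A}$ does not alter representatives, the map respects the full $\mathbb{Z}\times\mathbb{Z}_2$-grading, sending $\HBRSTtilde^{(k)}(\mathcal{A})$ into $\HBRST^{(k)}(\mathcal{A})$.

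There is essentially no obstacle in this argument: the whole statement rests on the two set-theoretic inclusions above, both of which hold trivially. The only point worth stressing is what is \emph{not} being claimed, namely that $I_\mathcal{A}$ is a morphism of graded algebras and \emph{not} of graded *-algebras, since $\HBRST^{(\bullet)}(\mathcal{A})$ carries no natural involution. This asymmetry is precisely the phenomenon that motivated introducing the BRST quotient in the first place, and I would phrase the statement so as to make clear that the involution is deliberately omitted from the structure preserved by $I_\mathcal{A}$.
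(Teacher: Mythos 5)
Your proposal is correct and follows essentially the same route as the paper's own (very brief) proof: well-definedness from the trivial inclusions $\ker D\cap\ker D^*\subseteq\ker D$ and $\image D\cap\image D^*\subseteq\image D$, multiplicativity because both products are induced by the product of $\mathcal{A}$ on representatives, and grading compatibility because both quotients inherit the $\mathbb{Z}\times\mathbb{Z}_2$-grading from $\mathcal{A}$. Your closing remark on why the involution cannot be part of the preserved structure is a correct and welcome clarification, consistent with the paper's motivation for introducing the BRST quotient.
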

\begin{proof}
The well-definedness follows directly with the definitions
of the quotients and the compatibility with the grading is
clear as both $\HBRSTtilde^{(\bullet)}(\mathcal{A})$ and
$\HBRST^{(\bullet)}(\mathcal{A})$ inherit the
$\mathbb{Z}\times\mathbb{Z}_2$-grading of $\mathcal{A}$.
\end{proof}

\begin{remark}
  The important question is if this canonical morphism $I_\mathcal{A}$
	is an isomorphism, which would justify our construction and yield a
	canonical involution on the BRST cohomology. In general,
	there seems to be no possibility to decide whether $I_\mathcal{A}$ is
	injective or surjective and one has to argue which reduction scheme
	fits better to the respective application. In
	Section~\ref{subsection:ComparisonofReducedQuantumBRSTAlgebras} we show
	that in our example of the quantum BRST algebra $I_\mathcal{A}$ is an
	isomorphism if restricted to the physically most relevant zero-th degree.
\end{remark}

\begin{remark}
The above considerations show that in some cases it might be useful
to consider instead of the usual cohomology the BRST quotient from
\eqref{eq:BRSTQuotient}. To further justify this proposal one has to transfer
the concepts of quasi-isomorphisms and chain homotopies from homological algebra
to our setting. For the notion of quasi-isomorphisms there is an obvious choice:
We call a morphism $\Phi\colon\mathcal{A}\longrightarrow\mathcal{B}$ of BRST
*-algebras \emph{quasi-isomorphism} if it induces an isomorphism
$\Phi \colon \HBRSTtilde^{(\bullet)}(\mathcal{A}) \rightarrow
\HBRSTtilde^{(\bullet)}(\mathcal{B})$ on the BRST quotients. The case of
chain homotopies is more subtle: One choice would be to consider a homotopy
$h \colon \mathcal{A}^{(\bullet)} \rightarrow \mathcal{B}^{(\bullet -1)}$ between
two morphisms $\Phi,\Psi \colon \mathcal{A} \rightarrow \mathcal{B}$ of
BRST *-algebras with respect to the BRST operator $D$, i.e. a $\ring{C}$-linear
map $h$ such that
\begin{equation*}
  h D_\mathcal{A} + D_\mathcal{B} h
	=
	\Phi - \Psi.
\end{equation*}
Then the map $h^* \colon \mathcal{A}^{(\bullet)} \rightarrow \mathcal{B}^{(\bullet +1)}$,
given on homogeneous elements $a\in \mathcal{A}$ by
$h^*(a) = - (-1)^{\abs{a}} (h(a^*))^*$
turns out to be a chain homotopy with respect to $D^*$ between $\Phi$ and $\Psi$.
In particular, in this case $\Phi$ and $\Psi$ induce the same maps on the
BRST quotients. However, it is not yet clear to us if this is the compatibility we
want to have and we plan to investigate it in a forthcoming paper.
\end{remark}

In the remaining part of this section we want to show that
*-involutions with imaginary ghost operators lead indeed to a non-trivial
*-representation theory on pre-Hilbert spaces, in contrast to
the involutions with Hermitian BRST charges and Hermitian ghost operators.

%
%
\subsection{BRST *-Representations and GNS Construction}
\label{subsection:BRSTStarRep}

We introduce a *-representation theory of BRST *-algebras
with imaginary ghost operator and show that the representations can be
reduced to *-representations of the reduced BRST *-algebras. In addition,
we sketch an adapted GNS construction. The notions are based on the
theory of pre-Hilbert spaces $\prehilb{H}$ as in
\cite[Chapter~7]{waldmann:2007a}. Recall that a pre-Hilbert space over
$\ring{C}$ is a $\ring{C}$-module $\prehilb{H}$ with positive definite
inner product $\SP{\argument,\argument}$. Note that positivity, i.e.
$\SP{\phi,\phi} > 0$ for all $\phi \in \prehilb{H}\setminus \{0\}$, makes sense
in our setting as $\ring{R} \subset \ring{C} = \ring{R}(\I)$ is ordered. A map
$A\colon \prehilb{H} \rightarrow \prehilb{H}$ is called adjointable if
there exists a map $A^* \colon \prehilb{H} \rightarrow \prehilb{H}$ such
that $\SP{A\phi, \psi}	=	\SP{\phi,A^* \psi}$
for all $\phi,\psi \in \prehilb{H}$. The set of adjointable maps
is denoted by $\Bounded(\prehilb{H})$.
These spaces can be adapted to our setting:

\begin{definition}[BRST pre-Hilbert space]
  A \emph{BRST pre-Hilbert space} is a $\mathbb{Z}\times \mathbb{Z}_2$-graded
	$\ring{C}$-module $\prehilb{H}$ together with
	\begin{enumerate2}
	  \item an odd endomorphism $\Theta_\prehilb{H} \in \End_1^{(1)}(\prehilb{H})$
		      of ghost number degree $+1$ with $\Theta_\prehilb{H}^2 = 0$, called
					\emph{BRST operator},
		\item a \emph{ghost number operator} $\gamma_\prehilb{H}$ defined by
		      \begin{equation}
            \I \gamma_\prehilb{H}\at{\prehilb{H}^{(k)}}
	          =
	          k \cdot\id\at{\prehilb{H}^{(k)}}
          \end{equation}
          and extended $\ring{C}$-linearly to all of $\prehilb{H}$, and
		\item an even graded positive definite inner product
		      $\SP{\argument,\argument}$,
	\end{enumerate2}
  such that one has the compatibilities
    \begin{equation}
        \label{eq:CompatibilityGhosts}
	  \gamma^*_\prehilb{H}
		=
		-\gamma_\prehilb{H}
		\quad \quad \text{ and } \quad \quad
		\Theta_\prehilb{H}
		\in \Bounded(\prehilb{H}).
	\end{equation}
	A morphism $T\colon \prehilb{H}\longrightarrow \prehilb{H}'$
	of BRST pre-Hilbert spaces is an adjointable $\ring{C}$-linear even
	map intertwining the BRST and ghost operators.
\end{definition}

\begin{remark}
  Alternatively, one could also consider isometric maps as morphisms of
	BRST pre-Hilbert spaces, instead of adjointable ones. The isomorphisms in
	this category are then unitary intertwiners, not adjointable bijective
	intertwiners. In our general setting these notions lead indeed to different
	notions of equivalent representations, and we favour adjointable ones because
	there might exist isometric maps not allowing for an adjoint.
\end{remark}

Note that the definition directly implies that $\Bounded^{(\bullet)}(\prehilb{H})
= \Bounded(\prehilb{H}) \cap \End^{(\bullet)}(\prehilb{H})$ is a well-defined
BRST *-algebra with imaginary ghost operator.
As in the case of BRST *-algebras one can construct the usual BRST cohomology
$\HBRST^{(\bullet)}(\prehilb{H}) 	= \ker \Theta_\prehilb{H} /
\image\Theta_\prehilb{H}$, but there exists no canonical inner product on this
quotient. Therefore, we define the BRST quotient
\begin{equation}
  \HBRSTtilde^{(\bullet)}(\prehilb{H})
	=
	\frac{\ker \Theta_\prehilb{H} \cap \ker \Theta^*_\prehilb{H}}
  {\image\Theta_\prehilb{H}\cap \ker \Theta^*_\prehilb{H}}.
\end{equation}
One can directly check that this quotient is again $\mathbb{Z} \times
\mathbb{Z}_2$-graded and noting $\ker \Theta_\prehilb{H}^* =
(\image \Theta_\prehilb{H})^\bot$
we can even show more:

\begin{proposition}
  For a BRST pre-Hilbert space $\prehilb{H}$ one has $\image \Theta_\prehilb{H}
  \cap (\image\Theta_\prehilb{H})^\bot = \{0\}$ and thus
  \begin{equation}
    \HBRSTtilde^{(\bullet)}(\prehilb{H})
    =
		\ker \Theta_\prehilb{H} \cap \ker \Theta^*_\prehilb{H}
    =
		\ker\Delta_\prehilb{H},
  \end{equation}
	where $\Delta_\prehilb{H} = \left[\Theta_\prehilb{H},
	\Theta^*_\prehilb{H}\right]$ denotes the Laplacian of
	$\Bounded^{(\bullet)}(\prehilb{H})$. In particular, the inner product on
	$\prehilb{H}$ restricts to a positive definite and non-degenerate inner
	product on $\HBRSTtilde^{(\bullet)}(\prehilb{H})$.
\end{proposition}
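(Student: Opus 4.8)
The plan is to show that the first claim — positive definiteness forces $\image \Theta_\prehilb{H} \cap (\image\Theta_\prehilb{H})^\bot = \{0\}$ — already collapses the denominator of the BRST quotient, after which the identification with $\ker\Delta_\prehilb{H}$ reduces to a short Hodge-type argument. First I would record the adjoint identity $(\image \Theta_\prehilb{H})^\bot = \ker \Theta^*_\prehilb{H}$ noted just before the statement: indeed, $w \perp \image \Theta_\prehilb{H}$ means $\SP{\Theta_\prehilb{H} u, w} = \SP{u, \Theta^*_\prehilb{H} w} = 0$ for all $u$, so non-degeneracy of the inner product gives $\Theta^*_\prehilb{H} w = 0$, and conversely. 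For the first claim itself, if $v \in \image \Theta_\prehilb{H} \cap (\image\Theta_\prehilb{H})^\bot$ then in particular $v \perp v$, i.e.\ $\SP{v,v} = 0$, and positive definiteness forces $v = 0$.

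Next I would combine these two facts to read off the first equality. The denominator of $\HBRSTtilde^{(\bullet)}(\prehilb{H})$ is $\image\Theta_\prehilb{H} \cap \ker \Theta^*_\prehilb{H}$, which by the adjoint identity equals $\image \Theta_\prehilb{H} \cap (\image \Theta_\prehilb{H})^\bot = \{0\}$. Hence the quotient is nothing but its numerator, giving $\HBRSTtilde^{(\bullet)}(\prehilb{H}) = \ker \Theta_\prehilb{H} \cap \ker \Theta^*_\prehilb{H}$ as a genuine submodule rather than a proper quotient.

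The second equality $\ker \Theta_\prehilb{H} \cap \ker \Theta^*_\prehilb{H} = \ker \Delta_\prehilb{H}$ is the heart of the argument and is the place where positivity is used essentially. The inclusion $\subseteq$ is immediate, since $\Delta_\prehilb{H} = \Theta_\prehilb{H}\Theta^*_\prehilb{H} + \Theta^*_\prehilb{H}\Theta_\prehilb{H}$ annihilates anything killed by both $\Theta_\prehilb{H}$ and $\Theta^*_\prehilb{H}$. For $\supseteq$, suppose $\Delta_\prehilb{H} v = 0$ and pair with $v$; using $(\Theta^*_\prehilb{H})^* = \Theta_\prehilb{H}$ (uniqueness of adjoints on a pre-Hilbert space) one obtains
\begin{equation*}
  0 = \SP{\Delta_\prehilb{H} v, v} = \SP{\Theta^*_\prehilb{H} v, \Theta^*_\prehilb{H} v} + \SP{\Theta_\prehilb{H} v, \Theta_\prehilb{H} v}.
\end{equation*}
Both summands lie in the positive cone of the ordered ring $\ring{R}$, so each must vanish, and positive definiteness then yields $\Theta_\prehilb{H} v = 0 = \Theta^*_\prehilb{H} v$.

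Finally, since $\HBRSTtilde^{(\bullet)}(\prehilb{H})$ is now a $\mathbb{Z}\times\mathbb{Z}_2$-graded submodule of $\prehilb{H}$ (the homogeneity of $\Theta_\prehilb{H}$ and $\Theta^*_\prehilb{H}$ makes $\ker\Delta_\prehilb{H}$ graded), the even graded inner product restricts to it verbatim, and a restriction of a positive definite form stays positive definite, hence non-degenerate. The only genuinely non-formal step is the inclusion $\supseteq$ above: it is precisely the point where, in contrast to general homological algebra, positive definiteness over the ordered ring $\ring{R}$ lets us identify $\ker\Delta_\prehilb{H}$ with the harmonic representatives — the analogue of the Hodge decomposition — \emph{without} any completeness or finiteness assumption.
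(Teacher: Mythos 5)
Your proposal is correct and follows essentially the same route as the paper's own proof: identify $(\image\Theta_\prehilb{H})^\bot=\ker\Theta^*_\prehilb{H}$ to kill the denominator via positive definiteness, and then use the computation $\SP{\phi,\Delta_\prehilb{H}\phi}=\SP{\Theta^*_\prehilb{H}\phi,\Theta^*_\prehilb{H}\phi}+\SP{\Theta_\prehilb{H}\phi,\Theta_\prehilb{H}\phi}\geq 0$ to identify $\ker\Theta_\prehilb{H}\cap\ker\Theta^*_\prehilb{H}$ with $\ker\Delta_\prehilb{H}$. You merely spell out a few steps the paper leaves implicit, such as the uniqueness of adjoints and the fact that a sum of two nonnegative elements of the ordered ring vanishes only if both do.
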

\begin{proof}
One has $\image \Theta_\prehilb{H} \cap \ker \Theta^*_\prehilb{H} =
\image \Theta_\prehilb{H} \cap (\image\Theta_\prehilb{H})^\bot = \{0\}$
and thus $\HBRSTtilde^{(\bullet)}(\prehilb{H})  =
\ker \Theta_\prehilb{H} \cap \ker \Theta^*_\prehilb{H}$.
The positive definiteness of the inner product on $\prehilb{H}$ implies
for all $\phi\in \prehilb{H}$
\begin{align*}
  \SP{\phi, \Delta_\prehilb{H} \phi }
  =
	\SP{\Theta_\prehilb{H}^* \phi,\Theta_\prehilb{H}^*\phi }
  + \SP{\Theta_\prehilb{H} \phi,\Theta_\prehilb{H}\phi} \geq 0,
\end{align*}
which entails $\ker \Theta_\prehilb{H} \cap \ker \Theta^*_\prehilb{H}
=\ker\Delta_\prehilb{H}$. Hence the inner product on $\prehilb{H}$ restricts
to $\HBRSTtilde^{(\bullet)}(\prehilb{H})$. It is positive definite and in particular non-degenerate.
\end{proof}

Elements in the kernel of $\Delta_\prehilb{H}$ are called \emph{harmonic}.
The above proposition shows that in the case of a BRST pre-Hilbert space
$\prehilb{H}$ the BRST quotient $\HBRSTtilde^{(\bullet)}(\prehilb{H})$
and the \emph{reduced BRST pre-Hilbert space}
\begin{equation}
  \widetilde{\prehilb{H}}_\red
	=
	\widetilde{\mathrm{H}}^{(0)}_{\BRST,0}(\prehilb{H})
\end{equation}
inherit both positive definite inner products and one easily sees that all the passages
are functorial.  We have again a canonical
inclusion of the BRST quotient into the BRST cohomology, compare
Proposition~\ref{prop:InclBRSTQuotienttoCohomology}. Now it is even injective
by the positive definiteness:

\begin{proposition}
	Let $\prehilb{H}$ be a BRST pre-Hilbert space.
  Then the canonical map
	$I_\prehilb{H}\colon \HBRSTtilde^{(\bullet)}(\prehilb{H})
	\longrightarrow  \HBRST^{(\bullet)}(\prehilb{H})$ is injective.
\end{proposition}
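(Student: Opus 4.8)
The plan is to reduce the statement entirely to the previous proposition, which identifies $\HBRSTtilde^{(\bullet)}(\prehilb{H})$ with the space of harmonic elements $\ker \Delta_\prehilb{H} = \ker \Theta_\prehilb{H} \cap \ker \Theta_\prehilb{H}^*$ and which established the orthogonality relation $\image \Theta_\prehilb{H} \cap \ker \Theta_\prehilb{H}^* = \image \Theta_\prehilb{H} \cap (\image \Theta_\prehilb{H})^\bot = \{0\}$. Under this identification the canonical map $I_\prehilb{H}$ simply takes a harmonic element $a \in \ker \Delta_\prehilb{H} = \HBRSTtilde^{(\bullet)}(\prehilb{H})$ to its ordinary cohomology class $[a] \in \HBRST^{(\bullet)}(\prehilb{H}) = \ker \Theta_\prehilb{H} / \image \Theta_\prehilb{H}$; this is well defined because $\ker \Delta_\prehilb{H} \subseteq \ker \Theta_\prehilb{H}$.

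To prove injectivity I would argue on representatives. Suppose $a \in \ker \Delta_\prehilb{H}$ satisfies $I_\prehilb{H}(a) = [a] = 0$ in $\HBRST^{(\bullet)}(\prehilb{H})$. By definition of the ordinary BRST cohomology this forces $a \in \image \Theta_\prehilb{H}$, while harmonicity of $a$ gives in particular $a \in \ker \Theta_\prehilb{H}^* = (\image \Theta_\prehilb{H})^\bot$. Combining both memberships yields
\begin{equation*}
  a \in \image \Theta_\prehilb{H} \cap (\image \Theta_\prehilb{H})^\bot = \{0\},
\end{equation*}
so $a = 0$ and the class in $\HBRSTtilde^{(\bullet)}(\prehilb{H})$ already vanishes, which is exactly injectivity.

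I do not expect any genuine obstacle here, since all the analytic content was front-loaded into the previous proposition, where positive definiteness of the inner product was used to force $\image \Theta_\prehilb{H} \cap (\image \Theta_\prehilb{H})^\bot = \{0\}$. The only point deserving a word of care is the identification $\ker \Theta_\prehilb{H}^* = (\image \Theta_\prehilb{H})^\bot$, which rests on the adjoint relation $\SP{\Theta_\prehilb{H} \phi, \psi} = \SP{\phi, \Theta_\prehilb{H}^* \psi}$ that is available precisely because $\Theta_\prehilb{H} \in \Bounded(\prehilb{H})$. Granting that, the injectivity of $I_\prehilb{H}$ is a one-line orthogonality argument requiring no positivity beyond what the preceding result already supplies.
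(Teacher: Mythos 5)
Your argument is correct and is essentially the paper's own proof: the paper likewise takes a difference of two classes lying in $\image \Theta_\prehilb{H} \cap \ker \Theta_\prehilb{H}^*$ and kills it by computing $\SP{\Theta_\prehilb{H}\chi,\Theta_\prehilb{H}\chi} = \SP{\chi,\Theta_\prehilb{H}^*\Theta_\prehilb{H}\chi} = 0$ and invoking positive definiteness. You merely package that computation as a citation of the orthogonality $\image \Theta_\prehilb{H} \cap (\image \Theta_\prehilb{H})^\bot = \{0\}$ already established in the preceding proposition, which is a legitimate and equivalent presentation.
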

\begin{proof}
Suppose $[\phi]= I_\prehilb{H}([\phi])= I_\prehilb{H}([\psi])=[\psi]$ for
$[\phi],[\psi]\in \HBRSTtilde^{(\bullet)}(\prehilb{H})$. This implies
$\phi-\psi = \Theta_\prehilb{H}\chi \in \image \Theta_\prehilb{H}
\cap  \ker \Theta_\prehilb{H} \cap \ker \Theta^*_\prehilb{H}$ and we can
compute
\begin{align*}
  \SP{\Theta_\prehilb{H}\chi,\Theta_\prehilb{H}\chi}
	=
	\SP{\chi, \Theta^*_\prehilb{H}\Theta_\prehilb{H}\chi}
	=
	0.
\end{align*}
But by the positive definiteness of the inner product this implies
$0 = \Theta_\prehilb{H}\chi = \phi -\psi$.
\end{proof}

\begin{remark}
  The above result is a more general version of the easy part of the
	Hodge theorem: the injectivity of the inclusion of the harmonic
	differential forms into the de Rham cohomology of a Riemannian manifold,
	see e.g. \cite[Lemma~4.15]{morita:2001a}. The difficult part is to show
	the surjectivity that does not always hold in our general situation.
\end{remark}

Now we can define BRST *-representations of BRST *-algebras with
imaginary ghost operators and their reduction:

\begin{definition}[BRST *-representation]
	\label{defi:brststarrep}
  Let $(\mathcal{A},\gamma,\Theta,^*)$ be a BRST *-algebra with imaginary
	ghost operator. A \emph{BRST *-representation} of $\mathcal{A}$ on a BRST
	pre-Hilbert space $\prehilb{H}$ is a morphism
  \begin{equation}
    \rho \colon
		\mathcal{A}
		\longrightarrow
		\Bounded^{(\bullet)}(\prehilb{H})
  \end{equation}
  of BRST *-algebras with imaginary ghost operator.
	An \emph{intertwiner} $T$ between two such BRST *-representations
	$(\prehilb{H},\rho)$ and $(\prehilb{H}',\rho')$ of $\mathcal{A}$ is a
	morphism $T \colon \prehilb{H} \longrightarrow \prehilb{H}'$
	of BRST pre-Hilbert spaces that satisfies in addition
  \begin{equation}
    T \circ \rho(a)
		=
		\rho'(a) \circ T
  \end{equation}
  for all $a \in \mathcal{A}$.
\end{definition}
Since all the passages from $\mathcal{A}$ to
$\HBRSTtilde^{(\bullet)}(\mathcal{A})$ and $\widetilde{\mathcal{A}}_\red$
as well as from $\prehilb{H}$ to $\HBRSTtilde^{(\bullet)}(\prehilb{H})$ and
$\widetilde{\prehilb{H}}_\red$ are functorial, we obtain the following
behaviour of BRST *-representations under the BRST reduction:

\begin{proposition}
	\label{lemma:ReductionofBRSTStarRep}
  Consider a BRST *-algebra $(\mathcal{A},\gamma,\Theta,^*)$ with imaginary
	ghost operator and a BRST *-representation $\rho$ of $\mathcal{A}$
	on a BRST pre-Hilbert space $\prehilb{H}$. Then
  \begin{equation}
    \widetilde{\rho}_\BRST \colon
		\HBRSTtilde^{(\bullet)}(\mathcal{A})
    \longrightarrow
    \Bounded^{(\bullet)}\left(\HBRSTtilde^{(\bullet)}(\prehilb{H})\right),
		\quad \quad
		\widetilde{\rho}_\BRST([a])[\phi]
		=
		[\rho(a)\phi]
  \end{equation}
  yields a *-representation of $\HBRSTtilde^{(\bullet)}(\mathcal{A})$ on
  $\HBRSTtilde^{(\bullet)}(\prehilb{H})$ which is compatible with all degrees.
	Moreover, the restriction
  \begin{equation}
    \widetilde{\rho}_\red
		=
		\left(\widetilde{\rho}_\BRST\at{\widetilde{\mathrm{H}}^{(0)}_{\BRST,0}
		(\mathcal{A})}\right)\at{\widetilde{\mathrm{H}}^{(0)}_{\BRST,0}(\prehilb{H})}
  \end{equation}
  yields a *-representation of
  $\widetilde{\mathcal{A}}_\red$ on $\widetilde{\prehilb{H}}_\red$.
	All the assignments are functorial.
\end{proposition}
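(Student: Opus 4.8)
The plan is to read off $\widetilde{\rho}_\BRST$ from the intertwining properties of $\rho$ and then to verify, one after another, well-definedness, multiplicativity, the $^*$-property and grading compatibility. Since $\rho$ is a morphism of BRST $^*$-algebras with imaginary ghost operator into $\Bounded^{(\bullet)}(\prehilb{H})$, it satisfies $\rho(\Theta)=\Theta_\prehilb{H}$, $\rho(\gamma)=\gamma_\prehilb{H}$ and $\rho(a^*)=\rho(a)^*$, whence also $\rho(\Theta^*)=\Theta^*_\prehilb{H}$. Recall moreover from the preceding proposition that the target $\HBRSTtilde^{(\bullet)}(\prehilb{H})=\ker\Delta_\prehilb{H}$ is an honest subspace of harmonic vectors rather than a proper quotient, so that $\widetilde{\rho}_\BRST([a])$ is simply the operator $\phi\mapsto\rho(a)\phi$, and the first thing to establish is that this operator preserves the harmonic subspace.

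First I would show that $\rho(a)$ preserves $\ker\Delta_\prehilb{H}$ for $a\in\ker D\cap\ker D^*$. Applying $\rho$ to $Da=\I\ad(\Theta)a=0$ gives $\ad(\Theta_\prehilb{H})\rho(a)=0$, and likewise $D^*a=0$ gives $\ad(\Theta^*_\prehilb{H})\rho(a)=0$; hence $\rho(a)$ super-commutes with both $\Theta_\prehilb{H}$ and $\Theta^*_\prehilb{H}$, so for harmonic $\phi$ one gets $\Theta_\prehilb{H}\rho(a)\phi=\pm\rho(a)\Theta_\prehilb{H}\phi=0$ and similarly $\Theta^*_\prehilb{H}\rho(a)\phi=0$, i.e.\ $\rho(a)\phi\in\ker\Delta_\prehilb{H}$. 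The subtle point, which I expect to be the main obstacle, is independence of the representative of $[a]$: for $a_0\in\image D\cap\image D^*$ I would write $a_0=Db$ and compute on harmonic $\phi$ that $\rho(a_0)\phi=\I\,\ad(\Theta_\prehilb{H})(\rho(b))\,\phi=\I\,\Theta_\prehilb{H}(\rho(b)\phi)\in\image\Theta_\prehilb{H}$, where $\Theta_\prehilb{H}\phi=0$ was used. By Lemma~\ref{lemma:SectionImagesStarSubideal} we also have $a_0\in\ker D\cap\ker D^*$, so by the previous step $\rho(a_0)\phi$ is harmonic, in particular in $\ker\Theta^*_\prehilb{H}$. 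Therefore $\rho(a_0)\phi\in\image\Theta_\prehilb{H}\cap\ker\Theta^*_\prehilb{H}=\image\Theta_\prehilb{H}\cap(\image\Theta_\prehilb{H})^\bot=\{0\}$, where the positive definiteness of the inner product enters decisively. Thus $\rho(a_0)$ annihilates all harmonic vectors and $\widetilde{\rho}_\BRST([a])$ is well defined.

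The remaining algebraic properties are then routine. Multiplicativity follows from $[a][b]=[ab]$ in $\HBRSTtilde^{(\bullet)}(\mathcal{A})$ (Lemma~\ref{lemma:ReducedBRSTStarAlgebra}) together with $\rho(ab)\phi=\rho(a)\rho(b)\phi$ and the fact that $\rho(b)\phi$ is already harmonic. For the $^*$-property I would use that the inner product on $\HBRSTtilde^{(\bullet)}(\prehilb{H})$ is the restriction of the one on $\prehilb{H}$: for harmonic $\phi,\psi$ one has $\SP{\rho(a)\phi,\psi}=\SP{\phi,\rho(a^*)\psi}$, so the adjoint of $\widetilde{\rho}_\BRST([a])$ on the harmonic subspace is $\widetilde{\rho}_\BRST([a^*])=\widetilde{\rho}_\BRST([a]^*)$; in particular $\widetilde{\rho}_\BRST([a])$ is adjointable and hence lands in $\Bounded^{(\bullet)}(\HBRSTtilde^{(\bullet)}(\prehilb{H}))$. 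Compatibility with the $\mathbb{Z}\times\mathbb{Z}_2$-grading is immediate, since a morphism of BRST algebras preserves ghost number and parity, so $\rho(\mathcal{A}^{(k)})$ shifts the ghost degree of harmonic vectors by $k$.

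Finally, because $\widetilde{\rho}_\BRST$ respects all degrees, it restricts to the ghost-number-zero even parts, giving a $^*$-representation $\widetilde{\rho}_\red$ of the $^*$-algebra $\widetilde{\mathcal{A}}_\red$ on the pre-Hilbert space $\widetilde{\prehilb{H}}_\red$. Functoriality is inherited from the already established functoriality of $\mathcal{A}\mapsto\HBRSTtilde^{(\bullet)}(\mathcal{A})$ and $\prehilb{H}\mapsto\HBRSTtilde^{(\bullet)}(\prehilb{H})$: an intertwiner $T$ maps harmonic vectors to harmonic vectors and intertwines $\rho,\rho'$, hence its restriction intertwines $\widetilde{\rho}_\BRST,\widetilde{\rho}{}'_\BRST$ and, after further restriction, $\widetilde{\rho}_\red,\widetilde{\rho}{}'_\red$, with compatibility with identities and composition being clear.
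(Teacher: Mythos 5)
Your proof is correct and follows the route the paper intends: the paper states this proposition without a written proof, deriving it from the functoriality of the passages to $\HBRSTtilde^{(\bullet)}(\argument)$, and your argument supplies exactly the verification that is being glossed over. In particular, the two points you isolate are the right ones — that elements of $\ker D\cap\ker D^*$ act by operators super-commuting with $\Theta_\prehilb{H}$ and $\Theta^*_\prehilb{H}$ and hence preserve the harmonic subspace, and that representatives in $\image D\cap\image D^*$ annihilate harmonic vectors because $\image\Theta_\prehilb{H}\cap(\image\Theta_\prehilb{H})^\bot=\{0\}$ by positive definiteness — and the remaining algebraic checks are routine, as you say.
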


Finally, we apply the general formalism of the GNS construction to the case of a BRST *-algebra with imaginary ghost operator, i.e. we construct
BRST *-representations out of suitable linear functionals.
We recall at first the usual GNS construction from
\cite[Section~7.2.2]{waldmann:2007a}:

\begin{remark}[GNS representation]
	\label{remark:gnsrepresentation}
	Let $\mathcal{A}$ be a *-algebra over $\ring{C}$ and $\omega\colon
	\mathcal{A}\longrightarrow \ring{C}$ a positive linear functional,
	i.e. $\omega(a^*a)\geq 0$ for all $a\in \mathcal{A}$. Then one has
	$\omega(b^*a) = \cc{\omega(a^* b)}$ for all $a,b\in\mathcal{A}$ as
	well as the Cauchy-Schwarz inequality. The subset
	\begin{equation*}
	  \mathcal{I}_\omega
		=
		\{ a\in\mathcal{A}\mid \omega(a^*a)=0\}
		=
		\{ a\in\mathcal{A} \mid \omega(b^*a)=0 \;\;\forall \; b\in \mathcal{A}\}
	  =
		\{ a\in\mathcal{A}\mid \omega(a^*b)=0 \;\; \forall \; b\in \mathcal{A}\}
	\end{equation*}
	is a left ideal in $\mathcal{A}$, the so-called \emph{Gel'fand ideal}.
	The quotient $\prehilb{H}_\omega =\mathcal{A}/\mathcal{I}_\omega$ becomes a
	left $\mathcal{A}$-module in the canonical way by setting
	$\pi_\omega(a)\psi_b=\psi_{ab}$ for $a,b\in\mathcal{A}$,
	where $\psi_b\in\prehilb{H}_\omega$ denotes the equivalence class of $b$.
	One has a positive definite inner product
	$\SP{\psi_a,\psi_b}_\omega=\omega(a^*b)$ on
	$\prehilb{H}_\omega$ and $\pi_\omega$ turns out to be a *-representation of
	$\mathcal{A}$, the so-called \emph{GNS representation} with respect to $\omega$.
\end{remark}

The representations of BRST *-algebras $\mathcal{A}$ should be
compatible with the $\mathbb{Z}\times\mathbb{Z}_2$-grading, whence we
have to consider $\mathbb{Z}\times\mathbb{Z}_2$-homogeneous positive
linear functionals $\omega \colon \mathcal{A} \rightarrow \ring{C}$, i.e.
such positive linear functionals that vanish on all degrees except
$\mathcal{A}^{(0)}_0$. In this case one easily sees that
$\mathcal{I}_\omega$ and $\prehilb{H}_\omega$ are
$\mathbb{Z}\times\mathbb{Z}_2$-graded and that the GNS representation
is compatible with the degrees. Even more, vectors in $\prehilb{H}_\omega$
with different degrees are orthogonal.

Since the $\mathbb{Z}$-grading of the BRST algebra is induced by $\gamma$,
we require in addition $\pi_\omega(\gamma) = \gamma_{\prehilb{H}_\omega}$.
Therefore, one needs a further condition on $\omega$ as a straightforward computation shows:

\begin{proposition}
  Consider a BRST *-algebra $(\mathcal{A},\gamma,\Theta,^*)$ with
	imaginary ghost operator and an even, positive linear functional
	$\omega\colon \mathcal{A}\rightarrow \ring{C}$.
	If the ghost charge satisfies $\gamma\in \mathcal{I}_\omega$, i.e.
	$\omega(a\gamma)=\omega(\gamma^* a)=0$ for all $a\in \mathcal{A}$,
	then $\omega$ is homogeneous with respect to the
	$\mathbb{Z}\times\mathbb{Z}_2$-grading and one has
  \begin{equation}
    \pi_\omega(\gamma)
		=
		\gamma_{\prehilb{H}_\omega}.
  \end{equation}
  In particular, $(\prehilb{H}_\omega = \mathcal{A}/\mathcal{I}_\omega,
  \gamma_\omega = \pi_\omega(\gamma), \Theta_\omega = \pi_\omega(\Theta))$
  is a BRST pre-Hilbert space and
  \begin{equation}
    \pi_\omega \colon
		\mathcal{A}
		\longrightarrow
		\Bounded^{(\bullet)}(\prehilb{H}_\omega)
  \end{equation}
  is a BRST *-representation of $\mathcal{A}$.
\end{proposition}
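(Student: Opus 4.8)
The plan is to reduce the whole statement to one arithmetic fact about the Gel'fand ideal, namely that the central Hermitian element $c$ from the decomposition $\gamma^* = -\gamma + c$ (produced by the Lemma above) already lies in $\mathcal{I}_\omega$. First I would record the standard consequences of positivity recalled in the GNS remark: the two descriptions $\mathcal{I}_\omega = \{a \mid \omega(b^*a)=0\ \forall b\} = \{a\mid \omega(a^*b)=0\ \forall b\}$, the Hermiticity $\omega(b^*a)=\cc{\omega(a^*b)}$, and the fact that $\mathcal{I}_\omega$ is a left ideal. With these, the hypothesis $\gamma\in\mathcal{I}_\omega$ is precisely the pair of identities $\omega(a\gamma)=0$ and $\omega(\gamma^* a)=0$ for all $a$, and the whole proof is an exercise in propagating this information through $c$.

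The crucial step is to show $c\in\mathcal{I}_\omega$, for which it suffices to verify $\omega(c^*c)=\omega(c^2)=0$ since $c$ is Hermitian. Expanding $c=\gamma+\gamma^*$ gives
\begin{equation*}
  \omega(c^2)
  =
  \omega(\gamma^2) + \omega(\gamma\gamma^*) + \omega(\gamma^*\gamma) + \omega((\gamma^*)^2).
\end{equation*}
Here $\omega(\gamma^2)=0$ follows from $\omega(a\gamma)=0$, while $\omega(\gamma^*\gamma)=\omega((\gamma^*)^2)=0$ follow from $\omega(\gamma^*a)=0$; the remaining term is handled by writing $\gamma\gamma^*=\gamma c-\gamma^2$, so that $\omega(\gamma\gamma^*)=\omega(c\gamma)-\omega(\gamma^2)=0$ using that $c$ is central and again $\omega(a\gamma)=0$. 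Hence $c\in\mathcal{I}_\omega$. Since $c$ is central, this pays off immediately a first time: from $\gamma=c-\gamma^*$ we get $\omega(\gamma a)=\omega(ca)-\omega(\gamma^* a)=0$ for all $a$ (the first term vanishing because $c\in\mathcal{I}_\omega$ is central, the second by hypothesis). Thus $\omega$ annihilates every product having $\gamma$ as a left or as a right factor.

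Homogeneity of $\omega$ is then immediate: for even $a\in\mathcal{A}^{(k)}$ one computes $k\,\omega(a)=\omega(\Gh a)=\I\,\omega(\gamma a)-\I\,\omega(a\gamma)=0$, so $\omega$ vanishes on $\mathcal{A}^{(k)}$ for $k\neq 0$; together with the assumed evenness this leaves $\omega$ supported on $\mathcal{A}^{(0)}_0$. As noted just before the statement, homogeneity forces $\mathcal{I}_\omega$, and hence $\prehilb{H}_\omega$, to be $\mathbb{Z}\times\mathbb{Z}_2$-graded, with $\prehilb{H}_\omega^{(k)}$ the image of $\mathcal{A}^{(k)}$, and it makes homogeneous vectors of distinct degree orthogonal so that the GNS inner product is even. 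For the identity $\pi_\omega(\gamma)=\gamma_{\prehilb{H}_\omega}$ I would take $a\in\mathcal{A}^{(k)}$ and use $\ad(\gamma)a=[\gamma,a]=-\I k\,a$ to rewrite $\gamma a = a\gamma - \I k\,a$; since $a\gamma\in\mathcal{I}_\omega$ (left ideal, $\gamma\in\mathcal{I}_\omega$), this yields $\pi_\omega(\gamma)\psi_a=\psi_{\gamma a}=-\I k\,\psi_a$, which is exactly $\gamma_{\prehilb{H}_\omega}\psi_a$ by the defining relation of the ghost number operator on $\prehilb{H}_\omega^{(k)}$.

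It remains to assemble the BRST pre-Hilbert space axioms and confirm that $\pi_\omega$ is a morphism. The operator $\Theta_\omega=\pi_\omega(\Theta)$ is odd of ghost degree $+1$ by compatibility of $\pi_\omega$ with the grading, satisfies $\Theta_\omega^2=\pi_\omega(\Theta^2)=0$, and is adjointable with $\Theta_\omega^*=\pi_\omega(\Theta^*)$ because a GNS representation is a $^*$-representation. The compatibility $\gamma_{\prehilb{H}_\omega}^*=-\gamma_{\prehilb{H}_\omega}$ is where $c\in\mathcal{I}_\omega$ is used a second time: as $c$ is central and in $\mathcal{I}_\omega$, one has $\pi_\omega(c)\psi_a=\psi_{ca}=0$, whence $\gamma_\omega^*=\pi_\omega(\gamma)^*=\pi_\omega(\gamma^*)=\pi_\omega(c)-\pi_\omega(\gamma)=-\gamma_\omega$. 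This makes $(\prehilb{H}_\omega,\gamma_\omega,\Theta_\omega)$ a BRST pre-Hilbert space, and since $\pi_\omega$ is a $^*$-representation compatible with all degrees sending $\gamma$ and $\Theta$ to $\gamma_\omega$ and $\Theta_\omega$, it is a BRST $^*$-representation. I expect the main obstacle to be exactly this central defect $c$: the hypotheses only control $\gamma$ on the right and $\gamma^*$ on the left, so neither the vanishing of the left action of $\gamma$ on $\omega$ nor the anti-Hermiticity of $\gamma_\omega$ is visible until one first establishes $c\in\mathcal{I}_\omega$.
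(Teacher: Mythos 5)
Your proof is correct. The paper itself offers no proof of this proposition (it is dismissed as ``a straightforward computation''), so there is nothing to compare against line by line; your argument supplies exactly the missing computation, and you have correctly identified the one genuinely non-obvious point, namely that the central defect $c = \gamma + \gamma^*$ must first be shown to lie in $\mathcal{I}_\omega$ before either the left-vanishing $\omega(\gamma a)=0$ or the anti-Hermiticity $\gamma_\omega^* = -\gamma_\omega$ becomes available. The verification $\omega(c^2)=0$ via the four-term expansion, with $\omega(\gamma\gamma^*)$ handled by substituting $\gamma^* = -\gamma + c$ and using centrality, is sound; the remaining steps (homogeneity from $k\,\omega(a) = \I\,\omega(\gamma a) - \I\,\omega(a\gamma)$, the identification $\pi_\omega(\gamma)\psi_a = -\I k\,\psi_a = \gamma_{\prehilb{H}_\omega}\psi_a$ using that $a\gamma \in \mathcal{I}_\omega$, and the standard GNS facts for adjointability of $\Theta_\omega$) all check out against the paper's definitions and conventions, in particular the normalization $\Gh = \I\ad(\gamma)$ and $\I\gamma_{\prehilb{H}}\at{\prehilb{H}^{(k)}} = k\cdot\id$.
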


Thus we have found a way to generalize the GNS construction
to BRST *-algebras with imaginary ghost operator, which gives us
an explicit method to construct BRST *-representations.

\begin{remark}
The next question one could ask is if for such a linear functional
$\omega$ the reduced representations $\widetilde{(\pi_\omega)}_\BRST$ and
$\widetilde{(\pi_\omega)}_\red$ are again GNS representations
of some linear functionals on the quotients. In particular, it is
interesting if there is a canonical way to construct the corresponding
linear functionals if they exist. It turns out that there is a positive
answer to both questions if one
requires in addition $\omega(\Delta)= 0$. This reflects the compatibility
of $\omega$ with the BRST charge $\Theta$ and its adjoint $\Theta^*$ that
are responsible for the reduction.
\end{remark}

\begin{remark}
Note that *-involutions and analogues of GNS representations are 
also studied in the more general context of involutive categories,
see e.g. \cite{jacobs:2012a}. One can interpret the BRST *-algebras as 
special involutive monoids in the involutive monoidal category of mixed 
complexes: Recall that the objects $(K^\bullet,d,D)$ in this category are 
$\mathbb{Z}$-graded $\ring{C}$-modules $K^\bullet$ that are both chain complex 
$(K^\bullet,d)$ and cochain complex $(K^\bullet,D)$, where $dD + Dd$ may be non-zero. 
One can check that the functor $K^\bullet \mapsto \cc{K}^\bullet$ given by 
$\cc{K}^n = \cc{K^{-n}}$ turns the category into an involutive monoidal category. 
BRST *-algebras are then special involutive 
monoids, where the chain and cochain map as well as the grading are given by 
inner derivations. The above GNS construction turns out to be a special case of
\cite[Thm.~7.1]{jacobs:2012a}. However, we want to stress that in our setting 
positivity plays a crucial role since we are interested in *-representations on 
pre-Hilbert spaces. In the general framework we are not aware of such an 
implementation of positivity.
\end{remark}

%
%
\section{*-Involutions for the Quantum BRST Algebra}
\label{section:InvolutionsforQuantumBRSTAlgebra}

In this section we apply the above results and construct a
*-involution for the quantum BRST algebra
$\mathcal{A}^{(\bullet)}[[\lambda]] = \Anti^\bullet \liealg{g}^*
\otimes \Anti^\bullet \liealg{g} \otimes \Cinfty(M)[[\lambda]]$
corresponding to a Hamiltonian quantum $\group{G}$-space
$(M,\star,\group{G},\boldsymbol{J})$.

%
%
\subsection{Graded *-Involutions on the Grassmann Algebra}
\label{subsection:GradedInvolutiononGrassmann}

Let us assume that we have an equivariant and Hermitian star product
on $M$, so that the complex conjugation is an involution on
$\Cinfty(M)[[\lambda]]$.  Thus we only need to find a suitable
*-involution for the Grassmann algebra leading to a quantum BRST
algebra having sufficiently many positive functionals.

A first possibility for an involution is the complex conjugation.
Unfortunately, we can check that it is neither a graded nor a super
*-involution with respect to the standard ordered star product.

We define a standard ordered representation in analogy with the case
of cotangent bundles \cite{neumaier:1998a}.  Let $\iota^*$ be the
restriction
\begin{equation}
  \iota^*\colon
  \Anti^\bullet(\liealg{g}_\mathbb{C}^*\oplus
  \liealg{g}_\mathbb{C})[[\lambda]]
  \longrightarrow
  \Anti^\bullet \liealg{g}_\mathbb{C}^*[[\lambda]],
\end{equation}
i.e. $\iota^*$ sets all forms with a nontrivial $\liealg{g}$-part to
zero.  Moreover, we denote by
\begin{equation}
  \pr^* \colon
  \Anti^\bullet \liealg{g}_\mathbb{C}^*[[\lambda]]
  \longrightarrow
  \Anti^\bullet (\liealg{g}_\mathbb{C}^*\oplus
  \liealg{g}_\mathbb{C})[[\lambda]]
\end{equation}
the inclusion map. It immediately follows $\iota^*\pr^* =
\id_{\Anti^\bullet \liealg{g}_\mathbb{C}^*[[\lambda]]}$ and we can
define the following representation.

\begin{definition}[Standard ordered representation]
   \label{defi:StandardOrderedRepGrassmann}
   The \emph{standard ordered representation}
   \begin{equation}
     \rho_\std \colon
     \left( \Anti^\bullet
	     (\liealg{g}_\mathbb{C}^*\oplus \liealg{g}_\mathbb{C})[[\lambda]],
	     \circ_\std
	   \right)
	   \longrightarrow
	   \End(\Anti^\bullet \liealg{g}_\mathbb{C}^*[[\lambda]])
   \end{equation}
   of $\left(\Anti^\bullet (\liealg{g}_\mathbb{C}^*\oplus
   \liealg{g}_\mathbb{C})[[\lambda]], \circ_\std\right)$ is defined by
   \begin{equation}
     \label{eq:RhoStdDefinition}
     \rho_\std(a)\alpha
     =
     \iota^*\left(a \circ_\std \pr^*\alpha \right)
   \end{equation}
   for $a\in \Anti^\bullet(\liealg{g}_\mathbb{C}^*\oplus
   \liealg{g}_\mathbb{C})[[\lambda]]$ and $\alpha\in \Anti^\bullet
   \liealg{g}_\mathbb{C}[[\lambda]]$.
\end{definition}
Then we directly see that $\rho_\std$ is
$\mathbb{C}[[\lambda]]$-linear and satisfies $\rho_\std(1) =
\id_{\Anti^\bullet \liealg{g}_\mathbb{C}^*[[\lambda]]}$.

\begin{remark}
  The idea comes from the theory of Clifford algebras, from which we
  know
  \begin{equation}
    \label{eq:CliffordIso}
    \Anti^\bullet\left(
    \liealg{g}_\mathbb{C}^*\oplus \liealg{g}_\mathbb{C}
    \right)
    \cong
    \Clifford(
    \liealg{g}_\mathbb{C}^* \oplus \liealg{g}_\mathbb{C};
		\SP{\argument,\argument}
    )
    \cong
    \Clifford(\liealg{g}_\mathbb{C}^* \oplus \liealg{g}_\mathbb{C})
    \cong
  \End(\Anti^\bullet \liealg{g}_\mathbb{C}^*),
\end{equation}
since all non-degenerate bilinear symmetric inner products are
equivalent on $\mathbb{C}^{2n}$, see
e.g. \cite[Prop.~2.4]{meinrenken:2013a}. Note that here the first isomorphism
is an isomorphism of vector spaces, whereas the other two are isomorphisms
of Clifford algebras. We transferred this idea to
the quantized setting.
\end{remark}
\begin{proposition}
	\label{prop:StandardOrderedRep}
	The standard ordered representation $\rho_\std$ defined by
	\eqref{eq:RhoStdDefinition} is a faithful representation of
	$\left(\Anti^\bullet (\liealg{g}_\mathbb{C}^*\oplus
	\liealg{g}_\mathbb{C})[[\lambda]],\circ_\std\right)$ on
	$\Anti^\bullet \liealg{g}_\mathbb{C}^*[[\lambda]]$, i.e. we
	have
	\begin{equation}
	  \rho_\std(a) \rho_\std(\widetilde{a})\alpha
	  =
	  \rho_\std ( a \circ_\std \widetilde{a}) \alpha.
	\end{equation}
	for all $a,\widetilde{a} \in \Anti^\bullet
	(\liealg{g}_\mathbb{C}^*\oplus \liealg{g}_\mathbb{C})[[\lambda]]$
	and $\alpha \in \Anti^\bullet \liealg{g}_\mathbb{C}^*[[\lambda]]$.
\end{proposition}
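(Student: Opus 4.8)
The plan is to reduce the homomorphism identity to a single structural property of $\circ_\std$, and then to establish faithfulness by evaluating $\rho_\std$ on the generators of the Grassmann algebra. First I would unfold both sides of the asserted identity. By associativity of $\circ_\std$ one has $\rho_\std(a\circ_\std\widetilde{a})\alpha = \iota^*(a\circ_\std\widetilde{a}\circ_\std\pr^*\alpha)$, whereas $\rho_\std(a)\rho_\std(\widetilde{a})\alpha = \iota^*(a\circ_\std\pr^*\iota^*(\widetilde{a}\circ_\std\pr^*\alpha))$. Writing $c=\widetilde{a}\circ_\std\pr^*\alpha$, the two expressions differ only by $\iota^*(a\circ_\std(c-\pr^*\iota^* c))$, and $c-\pr^*\iota^* c\in\ker\iota^*$ because $\pr^*\iota^*$ is precisely the idempotent projecting onto the pure $\Anti^\bullet\liealg{g}_\mathbb{C}^*$-part (indeed $\iota^*\pr^*=\id$). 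Hence the whole statement reduces to the claim that $\ker\iota^*$ is a left ideal for $\circ_\std$, i.e. that $\iota^*(a\circ_\std b)=0$ whenever $\iota^* b=0$.

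To prove this claim I would expand $a\circ_\std b=\mu(e^{2\I\lambda P^*}(a\otimes b))$ term by term using $P^*=\jns(\basis{e}^k)\otimes\ins(\basis{e}_k)$. The crucial observation is that in every term the right-hand factor $b$ is acted on only by powers of $\ins(\basis{e}_k)$, which contract the $\liealg{g}_\mathbb{C}^*$-entries of $b$ against the pairing and leave every $\liealg{g}_\mathbb{C}$-entry untouched. Thus if $b\in\ker\iota^*$, i.e. every monomial of $b$ carries at least one $\liealg{g}_\mathbb{C}$-factor, then so does every monomial of $a\circ_\std b$, whence $\iota^*(a\circ_\std b)=0$. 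This single observation is the heart of the matter and, to my mind, the main conceptual obstacle: one must see that the deformation $\circ_\std$ can alter the $\liealg{g}_\mathbb{C}^*$-content of the right factor but can neither create nor destroy its $\liealg{g}_\mathbb{C}$-content.

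For faithfulness I would compute $\rho_\std$ on the generators. A short calculation shows $\rho_\std(\basis{e}^i)=\basis{e}^i\wedge(\argument)$ and $\rho_\std(\basis{e}_i)=2\I\lambda\,\ins(\basis{e}_i)$, since the correction terms in $\basis{e}^i\circ_\std\pr^*\alpha$ vanish (the $\jns$-factor of $P^*$ annihilates the $\liealg{g}_\mathbb{C}^*$-element $\basis{e}^i$) whereas those in $\basis{e}_i\circ_\std\pr^*\alpha$ produce exactly $2\I\lambda\,\ins(\basis{e}_i)\alpha$. Moreover, a product of creators $\basis{e}^{i}$ followed by annihilators $\basis{e}_{j}$ contains no annihilation-before-creation, the sole source of corrections, so by grouping via associativity such standard-ordered products collapse to pure wedge products: $\basis{e}^{i_1}\circ_\std\cdots\circ_\std\basis{e}_{j_\ell}=\basis{e}^I\wedge\basis{e}_J$ exactly. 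Applying the homomorphism property just established then yields $\rho_\std(\basis{e}^I\wedge\basis{e}_J)=(2\I\lambda)^{\abs{J}}\,\basis{e}^I\wedge\ins(\basis{e}_J)(\argument)$.

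Finally, as $I,J$ range over all multi-indices the operators $\basis{e}^I\wedge\ins(\basis{e}_J)$ form a $\mathbb{C}[[\lambda]]$-basis of $\End(\Anti^\bullet\liealg{g}_\mathbb{C}^*[[\lambda]])$, and multiplication by the nonzero scalars $(2\I\lambda)^{\abs{J}}$ preserves $\mathbb{C}[[\lambda]]$-linear independence since $\mathbb{C}[[\lambda]]$ is an integral domain. Hence $\rho_\std$ sends the basis $\{\basis{e}^I\wedge\basis{e}_J\}$ of the Grassmann algebra to a linearly independent family, and is therefore injective, i.e. faithful. Since source and target have equal rank $2^{2\dim\liealg{g}}$ over $\mathbb{C}[[\lambda]]$, injectivity is in fact equivalent to surjectivity, so $\rho_\std$ realizes the deformed analogue of the Clifford isomorphism of the preceding Remark.
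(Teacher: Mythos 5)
Your argument for the statement as actually claimed is correct, and it supplies real content where the paper's proof only says ``lengthy but straightforward computations.'' The reduction of the homomorphism property to the single claim that $\ker\iota^*$ is a left ideal for $\circ_\std$ is exactly the right structural insight (it is the same mechanism that underlies standard-ordered representations on cotangent bundles in the cited work of Neumaier), and your verification of that claim is sound: in $\mu\circ e^{2\I\lambda\,\jns(\basis{e}^k)\otimes\ins(\basis{e}_k)}(a\otimes b)$ the right factor only ever loses $\liealg{g}_\mathbb{C}^*$-entries, never its $\liealg{g}_\mathbb{C}$-entries, so $\iota^* b=0$ forces $\iota^*(a\circ_\std b)=0$. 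The faithfulness argument via $\rho_\std(\basis{e}^I\wedge\basis{e}_J)=(2\I\lambda)^{\abs{J}}\basis{e}^I\wedge\ins(\basis{e}_J)$, the classical fact that these operators form a basis of $\End(\Anti^\bullet\liealg{g}_\mathbb{C}^*)$, and integrality of $\mathbb{C}[[\lambda]]$ is also correct and agrees with the formulas \eqref{eq:RhoStdofBasis} recorded in the paper.

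The one genuine error is your final sentence. Over $\mathbb{C}[[\lambda]]$, injectivity of a map between free modules of equal finite rank does \emph{not} imply surjectivity (multiplication by $\lambda$ on $\mathbb{C}[[\lambda]]$ is the standard counterexample), and indeed $\rho_\std$ is \emph{not} surjective onto $\End(\Anti^\bullet\liealg{g}_\mathbb{C}^*[[\lambda]])$: the operator $\ins(\basis{e}_i)=\tfrac{1}{2\I\lambda}\rho_\std(\basis{e}_i)$ does not lie in the image, since the $(2\I\lambda)^{\abs{J}}$ prefactors are non-invertible. The image only becomes all of $\End$ after inverting $\lambda$ or at the classical level, which is what the Clifford isomorphism \eqref{eq:CliffordIso} actually asserts. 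Since the proposition claims only faithfulness, this does not invalidate your proof of the stated result, but the surjectivity claim should be deleted.
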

\begin{proof}
  The properties follow from lengthy but straightforward computations.
\end{proof}
Using the definition of $\rho_\std$ we can immediately compute
\begin{equation}
  \label{eq:RhoStdofBasis}
  \rho_\std(1)
  =
  \id,
  \quad
  \rho_\std(\basis{e}_i)
  =
  2 \I\lambda \ins(\basis{e}_i)
  \quad
  \text{and}
  \quad
  \rho_\std(\basis{e}^i)
  =
  \basis{e}^i \wedge \cdot \;
\end{equation}
for the elements $1,\basis{e}_1,\dots,\basis{e}_n,\basis{e}^1,\dots,
\basis{e}^n$ that generate $(\Anti^\bullet(\liealg{g}_\mathbb{C}^*\oplus
\liealg{g}_\mathbb{C})[[\lambda]],\circ_\std)$.

It is known that $\Anti^\bullet \liealg{g}_\mathbb{C}^*$ has a structure
of a pre-Hilbert space over $\mathbb{C}$, which extends to
$\mathbb{C}[[\lambda]]$.

\begin{lemma}
  Let $g$ be a positive definite symmetric bilinear
  inner product on $\liealg{g}$. Then it induces a positive definite sesquilinear
  product $\SP{\argument,\argument}_*$ on $\Anti^\bullet
	\liealg{g}_\mathbb{C}^*[[\lambda]]$   via
  \begin{equation}
    \label{eq:ProductonGrassmann}
	  \SP{a_1\wedge \dots\wedge a_k, b_1\wedge \dots \wedge b_k}_*
	  =
	  \mathsf{det}(g^{-1}(a_i,b_j))
  \end{equation}
  for all $a_1,\dots,a_k,b_1,\dots,b_k \in \liealg{g}_\mathbb{C}^*$.
  In particular, $(\Anti^\bullet \liealg{g}_\mathbb{C}^*[[\lambda]],
	\SP{\argument,\argument}_*)$ is a pre-Hilbert space over
	$\mathbb{C}[[\lambda]]$.
\end{lemma}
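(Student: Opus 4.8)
The plan is to reduce the statement to the classical construction of the induced inner product on an exterior algebra over $\mathbb{C}$ and then to pass to $\mathbb{C}[[\lambda]]$ by a leading-order argument using the order on $\mathbb{R}[[\lambda]]$. First I would transport $g$ to the dual and complexify: the positive definite symmetric form $g$ on $\liealg{g}$ induces a positive definite symmetric form $g^{-1}$ on the real space $\liealg{g}^*$, and I extend it sesquilinearly to a Hermitian inner product on $\liealg{g}_\mathbb{C}^*$, antilinear in the first and linear in the second argument, which is the reading of $g^{-1}(a_i,b_j)$ that makes the determinant formula sesquilinear. If $\basis{e}^1,\dots,\basis{e}^n$ is a $g^{-1}$-orthonormal basis of $\liealg{g}^*$, this Hermitian form is $\SP{\sum_i z_i \basis{e}^i, \sum_j w_j \basis{e}^j}_* = \sum_i \cc{z_i} w_i$, manifestly positive definite over $\mathbb{C}$.

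Next I would check that the formula descends to the exterior powers and is positive definite there. The assignment $(a_1,\dots,a_k,b_1,\dots,b_k) \mapsto \det(g^{-1}(a_i,b_j))$ is antilinear and alternating in the entries $a_1,\dots,a_k$ and linear and alternating in $b_1,\dots,b_k$; by the universal property of the exterior power it therefore factors through a well-defined sesquilinear form on $\Anti^k \liealg{g}_\mathbb{C}^* \times \Anti^k \liealg{g}_\mathbb{C}^*$, and extending it by zero between different degrees yields an even sesquilinear form on $\Anti^\bullet \liealg{g}_\mathbb{C}^*$. Positive definiteness over $\mathbb{C}$ is then immediate from the orthonormal basis: for ordered multi-indices one computes $\SP{\basis{e}^{i_1}\wedge\dots\wedge\basis{e}^{i_k}, \basis{e}^{j_1}\wedge\dots\wedge\basis{e}^{j_k}}_* = \det(\delta_{i_a j_b})$, which equals $1$ if the two multi-indices coincide and $0$ otherwise, so the wedge products $\basis{e}^{i_1}\wedge\dots\wedge\basis{e}^{i_k}$ form an orthonormal basis of $\Anti^\bullet \liealg{g}_\mathbb{C}^*$.

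Finally I would extend $\SP{\argument,\argument}_*$ to be $\mathbb{C}[[\lambda]]$-sesquilinear and invoke the standard formal-positivity argument, compare \cite[Chapter~7]{waldmann:2007a}. For a nonzero $\phi \in \Anti^\bullet \liealg{g}_\mathbb{C}^*[[\lambda]]$, write $\phi = \lambda^m \psi$ with $\psi = \psi_0 + \lambda \psi_1 + \dots$ and $\psi_0 \neq 0$; then $\SP{\phi,\phi}_* = \lambda^{2m}\bigl(\SP{\psi_0,\psi_0}_* + O(\lambda)\bigr)$, and since $\SP{\psi_0,\psi_0}_* > 0$ in $\mathbb{R}$ by the classical positivity just established, the series $\SP{\phi,\phi}_*$ has strictly positive lowest nonvanishing coefficient and is therefore positive in the ordered ring $\mathbb{R}[[\lambda]]$. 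This gives positive definiteness over $\mathbb{C}[[\lambda]]$ and hence the asserted pre-Hilbert space structure. I expect the only genuine point to be this last passage from $\mathbb{C}$ to $\mathbb{C}[[\lambda]]$: positivity is here no longer an analytic notion but a statement about the order on $\mathbb{R}[[\lambda]]$, and it is exactly the leading-order term that controls the sign; everything preceding it is the classical determinant construction of an inner product on an exterior algebra.
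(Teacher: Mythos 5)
Your proof is correct. The paper itself gives no proof of this lemma (it is stated as a known fact, prefaced by ``It is known that\dots''), and your argument is exactly the standard one it implicitly relies on: the Gram-determinant construction on $\Anti^\bullet\liealg{g}_\mathbb{C}^*$ with positivity read off from an orthonormal multi-index basis, followed by the leading-order-coefficient argument that lifts positive definiteness from $\mathbb{C}$ to the ordered ring $\mathbb{R}[[\lambda]]$ in the sense of \cite[Chapter~7]{waldmann:2007a} --- including the one genuinely non-classical point, correctly identified, that positivity over $\mathbb{C}[[\lambda]]$ is an order-theoretic statement controlled by the lowest nonvanishing coefficient.
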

In order to get an involution on
$\Anti^\bullet(\liealg{g}_\mathbb{C}^*\oplus
\liealg{g}_\mathbb{C})[[\lambda]]$ which is independent of $\lambda$,
we define the rescaled inner product $\SP{\argument,\argument}$ for each
$\Anti^k \liealg{g}_\mathbb{C}^*[[\lambda]]$ by
\begin{equation}
  \label{eq:ProductonGrassmannScaled}
  \SP{a,b}
  =
  (2\lambda)^k \SP{a,b}_* ,
  \quad
  \text{where}
  \quad
  a,b \in \Anti^k \liealg{g}_\mathbb{C}^*[[\lambda]].
\end{equation}
Note that for $\lambda=0$, the inner product $\SP{\argument,\argument}$ on
$\Anti^\bullet \liealg{g}_\mathbb{C}^*[[\lambda]]$ is degenerate, but
the corresponding *-involution on
$\Anti^\bullet(\liealg{g}_\mathbb{C}^*\oplus
\liealg{g}_\mathbb{C})[[\lambda]]$
resp. $\Anti^\bullet(\liealg{g}_\mathbb{C}^*\oplus
\liealg{g}_\mathbb{C})$ is still well-defined.

\begin{proposition}
  \label{prop:GrassmannStarRep}
  The standard ordered representation $\rho_\std$ of
  $\Anti^\bullet(\liealg{g}_\mathbb{C}^*\oplus \liealg{g}_\mathbb{C})[[\lambda]]$
  from Definition~\ref{defi:StandardOrderedRepGrassmann} is
  a *-representation with respect to the graded *-involution induced by
  \begin{equation}
    \label{eq:StarInvolutionGrassmann}
    \xi^*
    =
    -\I g^\flat(\xi)
    \quad
    \text{and}
    \quad
    \alpha^*
    =
    -\I g^\sharp(\alpha)
    \quad
    \forall \; \xi \in \liealg{g}_\mathbb{C},
    \; \alpha\in \liealg{g}^*_\mathbb{C}.
  \end{equation}
  Moreover, $\gamma=\frac{1}{2}\basis{e}^k \wedge \basis{e}_k$ fulfils
  $\gamma^* = - \gamma$.
\end{proposition}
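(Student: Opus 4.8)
The plan is to reduce the verification to the algebra generators. By Proposition~\ref{prop:StandardOrderedRep} the map $\rho_\std$ is a homomorphism for $\circ_\std$, the graded *-involution is a $\circ_\std$-antihomomorphism, and the operator adjoint with respect to $\SP{\argument,\argument}$ is an antihomomorphism for composition. Hence, if $\rho_\std(a)^* = \rho_\std(a^*)$ holds for two elements it holds for their product, since
\begin{equation*}
  \rho_\std(a \circ_\std b)^*
  =
  \rho_\std(b)^* \rho_\std(a)^*
  =
  \rho_\std(b^*) \rho_\std(a^*)
  =
  \rho_\std\left((a \circ_\std b)^*\right),
\end{equation*}
and $\ring{C}$-antilinearity on both sides is automatic. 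As $\Anti^\bullet(\liealg{g}_\mathbb{C}^*\oplus\liealg{g}_\mathbb{C})[[\lambda]]$ is generated by $1,\basis{e}_1,\dots,\basis{e}_n,\basis{e}^1,\dots,\basis{e}^n$, it suffices to check the *-property on these using the explicit formulas \eqref{eq:RhoStdofBasis}.

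The key input is the classical adjunction on the exterior algebra: with respect to the product $\SP{\argument,\argument}_*$ from \eqref{eq:ProductonGrassmann}, wedging by a covector and contracting by its metric dual are mutually adjoint,
\begin{equation*}
  \SP{\basis{e}^i \wedge \phi, \psi}_*
  =
  \SP{\phi, \ins(g^\sharp \basis{e}^i)\psi}_*
  \qquad\text{with}\qquad
  g^\sharp \basis{e}^i = g^{ij}\basis{e}_j,
\end{equation*}
for all $\phi,\psi \in \Anti^\bullet \liealg{g}_\mathbb{C}^*[[\lambda]]$. This is immediate on a $g^{-1}$-orthonormal basis and extends sesquilinearly, and it is the only nontrivial ingredient.

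Then I would carry the degree-dependent rescaling through this adjunction. Writing $(\basis{e}^i)^* = -\I g^{ij}\basis{e}_j$ and $\basis{e}_i^* = -\I g_{ij}\basis{e}^j$ from \eqref{eq:StarInvolutionGrassmann}, the formulas $\rho_\std(\basis{e}^i) = \basis{e}^i\wedge\argument$ and $\rho_\std(\basis{e}_i) = 2\I\lambda\ins(\basis{e}_i)$ give $\rho_\std((\basis{e}^i)^*) = 2\lambda g^{ij}\ins(\basis{e}_j)$ and $\rho_\std(\basis{e}_i^*) = -\I g_{ij}\basis{e}^j\wedge\argument$. Using $\SP{\argument,\argument} = (2\lambda)^k\SP{\argument,\argument}_*$ in degree $k$, the adjunction above, and that the inner product is conjugate-linear in its first slot (so $\cc{2\I\lambda} = -2\I\lambda$ factors out), a direct computation then shows $\rho_\std(\basis{e}^i)^* = \rho_\std((\basis{e}^i)^*)$ and $\rho_\std(\basis{e}_i)^* = \rho_\std(\basis{e}_i^*)$. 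I expect the main obstacle to be purely bookkeeping: one must check that the degree-shifting factor $(2\lambda)^{\pm 1}$ produced by passing between degrees $k$ and $k-1$ in the rescaled inner product cancels exactly against the factor $2\I\lambda$ in $\rho_\std(\basis{e}_i)$, leaving a $\lambda$-independent identity; this is precisely what the normalization \eqref{eq:ProductonGrassmannScaled} is designed to achieve.

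Finally, for $\gamma = \tfrac12 \basis{e}^k\wedge\basis{e}_k$ I would use that in this order the star product reduces to the wedge, $\basis{e}^k\circ_\std\basis{e}_k = \basis{e}^k\wedge\basis{e}_k$, since the bidifferential operator $P^*$ contracts the $\liealg{g}$-component of the left factor, which is absent in $\basis{e}^k$. Applying the graded *-involution as a $\circ_\std$-antihomomorphism then yields
\begin{equation*}
  \gamma^*
  =
  \tfrac12\, \basis{e}_k^* \circ_\std (\basis{e}^k)^*
  =
  \tfrac12 (-\I)^2 g_{kl}g^{km}\, \basis{e}^l \wedge \basis{e}_m
  =
  -\tfrac12\, \delta^m_l\, \basis{e}^l\wedge\basis{e}_m
  =
  -\tfrac12\, \basis{e}^l\wedge\basis{e}_l
  =
  -\gamma,
\end{equation*}
which completes the argument.
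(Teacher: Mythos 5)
Your proposal is correct and follows essentially the same route as the paper: verify the adjoint relations on the generators $1,\basis{e}_i,\basis{e}^i$ via the explicit formulas \eqref{eq:RhoStdofBasis} and the wedge/insertion adjunction for $\SP{\argument,\argument}_*$, track the $(2\lambda)^k$ rescaling against the $2\I\lambda$ in $\rho_\std(\basis{e}_i)$, and obtain $\gamma^*=-\gamma$ from $\basis{e}^k\circ_\std\basis{e}_k=\basis{e}^k\wedge\basis{e}_k$ together with the graded antihomomorphism property. Your write-up is in fact slightly more complete than the paper's, which only spot-checks the adjunction on low-degree elements and leaves the reduction to generators implicit.
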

\begin{proof}
  For $c\in\mathbb{C}$ we have
  \begin{align*}
    \SP{\rho_\std(\basis{e}_\ell)^*c,\basis{e}^j}
    =
    \SP{c,\rho_\std(\basis{e}_\ell) \basis{e}^j}
    =
    2\I\lambda  \SP{c, \ins(\basis{e}_\ell) \basis{e}^j}
    =
    2\I\lambda \cc{c} \delta^j_\ell
    =
    \I g_{\ell k}\cc{c}\SP{ \basis{e}^k,\basis{e}^j}
    =
    \SP{\rho_\std(-\I g_{\ell k}\basis{e}^k)c ,\basis{e}^j},
\end{align*}
and analogously $\SP{\rho_\std(\basis{e}^\ell)^* \basis{e}^j , c} =
\SP{\rho_\std(\frac{g^{\ell k}}{\I}\basis{e}_k)\basis{e}^j,c}$.
In other words, we get
$
  \rho_\std(\basis{e}_\ell)^*
  =
  \rho_\std(-\I g^\flat(\basis{e}_\ell))
$
 and
 $
  \rho_\std(\basis{e}^\ell)^*
  =
  \rho_\std(-\I g^\sharp(\basis{e}^\ell))
$.
Furthermore we have $\rho_\std(c)^*=\rho_\std(\cc{c})$,
inducing the involution from \eqref{eq:StarInvolutionGrassmann}.
Finally we compute
\begin{align*}
  2 \gamma^*
  =
  (\basis{e}^k \circ_\std \basis{e}_k)^*
  =
  - g^{km}g_{kn} \basis{e}^n \circ_\std \basis{e}_m
  =
  - 2\gamma.
\end{align*}
\end{proof}

The above result shows that this graded *-involution is in some sense
a ``natural'' one since it is induced by the above representation
$\rho_\std$.  Moreover, one can show that the standard ordered
representation $\rho_\std$ from
Definition~\ref{defi:StandardOrderedRepGrassmann} is even unitarily
equivalent to a GNS representation.

Finally, we want to show that we have sufficiently many positive
linear functionals. We recall \cite[Def.~2.7]{bursztyn.waldmann:2001a}:
Let $(\algebra{A},^*)$ be a *-algebra over $\ring{C}= \ring{R}(\I)$. Then
$\algebra{A}$ has \emph{sufficiently many positive linear
functionals} if for any non-zero Hermitian element
$h=h^*\in \algebra{A}\setminus \{0\}$	there exists a positive
linear functional $\omega \colon \algebra{A} \longrightarrow
\ring{C}$ with $\omega(h) \neq 0$.

The non-deformed Grassmann algebra has obviously not sufficiently many
positive linear functionals as $a^*\wedge a =0$ for all $a\in \Anti^k
\liealg{g}^* \otimes \Anti^\ell \liealg{g}$ with $k+\ell > n$, hence the
Cauchy-Schwarz inequality implies $\omega(a)=0$ for all such $a$ and
all positive linear functionals $\omega$, in particular for the
Hermitian ones.

\begin{proposition}
	\label{prop:DeformedGrassmannSufficientlyPosFunc}
	Let $g$ be a positive definite and symmetric bilinear
	inner product, 	inducing the involution $^*$ via
	\eqref{eq:StarInvolutionGrassmann}. Then the deformed Grassmann algebra
	$(\Anti^\bullet(\liealg{g}_\mathbb{C}^*\oplus \liealg{g}_\mathbb{C})[[\lambda]],
	\circ_\std,^*)$ has sufficiently many positive linear functionals.
\end{proposition}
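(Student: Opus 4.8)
The plan is to realize the required positive functionals as vector states of the faithful standard ordered \mbox{*-representation} $\rho_\std$ on the pre-Hilbert space $\prehilb{H} = \Anti^\bullet \liealg{g}_\mathbb{C}^*[[\lambda]]$ carrying the inner product $\SP{\argument,\argument}$ from \eqref{eq:ProductonGrassmannScaled}. For each $\psi\in\prehilb{H}$ I set
\begin{equation*}
  \omega_\psi(a)
  =
  \SP{\psi, \rho_\std(a)\psi},
  \qquad
  a \in \Anti^\bullet(\liealg{g}_\mathbb{C}^*\oplus\liealg{g}_\mathbb{C})[[\lambda]].
\end{equation*}
This is $\ring{C}$-linear because $\rho_\std$ is $\mathbb{C}[[\lambda]]$-linear and the inner product is linear in the second slot. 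Since $\rho_\std$ is a \mbox{*-representation} by Proposition~\ref{prop:GrassmannStarRep}, one has $\rho_\std(a^*) = \rho_\std(a)^*$ in $\Bounded(\prehilb{H})$, whence
\begin{equation*}
  \omega_\psi(a^* \circ_\std a)
  =
  \SP{\psi, \rho_\std(a)^*\rho_\std(a)\psi}
  =
  \SP{\rho_\std(a)\psi, \rho_\std(a)\psi}
  \geq
  0,
\end{equation*}
so that every $\omega_\psi$ is a positive linear functional on the deformed Grassmann algebra.

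It then remains to check that for each Hermitian $h = h^* \neq 0$ some $\omega_\psi$ detects $h$. Since $\rho_\std$ is faithful by Proposition~\ref{prop:StandardOrderedRep}, $h\neq 0$ forces $\rho_\std(h)\neq 0$ as an operator, so there is a vector $\psi_0\in\prehilb{H}$ with $\rho_\std(h)\psi_0\neq 0$. I claim the quadratic form $\psi\mapsto\SP{\psi,\rho_\std(h)\psi}$ cannot vanish identically. Indeed, as $\mathbb{Q}\subseteq\ring{R}$ and $\I\in\ring{C}$, the complex polarization identity
\begin{equation*}
  4\SP{\phi, \rho_\std(h)\psi}
  =
  \sum_{k=0}^{3}\I^{-k}\SP{\phi+\I^k\psi,\, \rho_\std(h)(\phi+\I^k\psi)}
\end{equation*}
is available over $\ring{C}$, and vanishing of all diagonal terms would give $\SP{\phi,\rho_\std(h)\psi}=0$ for all $\phi,\psi\in\prehilb{H}$; taking $\psi = \psi_0$ and $\phi = \rho_\std(h)\psi_0$ then yields $\SP{\rho_\std(h)\psi_0,\rho_\std(h)\psi_0}=0$, contradicting positive definiteness since $\rho_\std(h)\psi_0\neq 0$. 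Hence there is a $\psi$ with $\omega_\psi(h)\neq 0$, which is exactly a positive functional separating $h$.

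The only genuinely delicate point is that the whole argument lives over the ordered ring $\ring{C}=\ring{R}(\I)$ with $\ring{R}=\mathbb{R}[[\lambda]]$ rather than over $\mathbb{C}$, so I must ensure that both polarization and the positivity contradiction survive the passage to formal series. They do: $\tfrac14\in\ring{R}$ makes the polarization identity legal, and although the scaled inner product \eqref{eq:ProductonGrassmannScaled} degenerates at $\lambda=0$, it is positive definite over $\mathbb{C}[[\lambda]]$, so $\SP{\chi,\chi}>0$ for every nonzero $\chi\in\prehilb{H}$, which is all the contradiction needs. I would finally remark that this is the expected outcome of the general principle that a \mbox{*-algebra} admitting a faithful \mbox{*-representation} on a pre-Hilbert space automatically has sufficiently many positive linear functionals, compare \cite{bursztyn.waldmann:2001a}; here the involution \eqref{eq:StarInvolutionGrassmann} was precisely designed so that $\rho_\std$ becomes such a representation.
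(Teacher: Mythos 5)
Your argument is correct, but it follows a genuinely different route from the paper's proof. The paper proceeds by explicit construction: starting from the projection $\delta$ onto the $\mathbb{C}[[\lambda]]$-component, which is positive, it passes to the conjugated functionals $\delta_b(a) = \delta(b^* \circ_\std a \circ_\std b)$ and, for a Hermitian element of non-trivial bidegree written in an orthonormal basis, exhibits a concrete $b$ and coefficients $c_1, c_2$ with $\delta_b(a) = (2\I\lambda)^{i+j}(-\I)^i\left((-1)^j\cc{c}\,\cc{c_1}c_2 + c c_1\cc{c_2}\right) \neq 0$. You instead instantiate the abstract principle that a faithful *-representation on a pre-Hilbert space yields sufficiently many positive linear functionals --- precisely \cite[Prop.~2.8]{bursztyn.waldmann:2001a}, which the paper itself invokes only later, in the proof of Lemma~\ref{lemma:QuantumBRSTStarAlgebrawithImaginaryGhost} --- and you reprove the needed direction via vector states and polarization. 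The two are closer than they appear: since $\delta_b(a) = \SP{\rho_\std(b)1, \rho_\std(a)\rho_\std(b)1}$, the paper's functionals are exactly your $\omega_\psi$ with $\psi = \rho_\std(b)1$; the paper finds the witnessing vector by hand while you establish its existence non-constructively from faithfulness. What your version buys is brevity and a clean separation of the two inputs, namely the faithfulness of $\rho_\std$ and the positive definiteness of the rescaled inner product over $\mathbb{C}[[\lambda]]$, both of which you handle correctly (including the legitimacy of polarization since $\mathbb{Q}\subseteq\ring{R}$). What it costs is that it leans on the faithfulness claim of Proposition~\ref{prop:StandardOrderedRep}, whose proof the paper only sketches, whereas the paper's explicit computation is self-contained and makes visible exactly where the deformation enters: the prefactor $(2\I\lambda)^{i+j}$, which vanishes classically, in line with the preceding remark that the undeformed Grassmann algebra does not have sufficiently many positive linear functionals.
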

\begin{proof}
  At first, note that if $\omega \colon
  \Anti^\bullet(\liealg{g}_\mathbb{C}^*\oplus
  \liealg{g}_\mathbb{C})[[\lambda]] \longrightarrow
  \mathbb{C}[[\lambda]]$ is a positive linear functional, then
  \begin{align*}
    \omega_b \colon
    \Anti^\bullet(
    \liealg{g}_\mathbb{C}^*\oplus \liealg{g}_\mathbb{C}
    )[[\lambda]]  \ni
    a
    \longmapsto
    \omega_b(a)=\omega(b^* \circ_\std a \circ_\std b)
    \in \mathbb{C}[[\lambda]]
\end{align*}
  is also positive and linear.
Consider now the projection $\delta\colon \Anti^\bullet(
\liealg{g}_\mathbb{C}^*\oplus \liealg{g}_\mathbb{C})[[\lambda]]
\longrightarrow \mathbb{C}[[\lambda]]$ that is a positive linear
functional.  For elements $c \in \mathbb{R}[[\lambda]]\setminus \{0\}$
we have $\delta(c)\neq 0$, hence we can restrict ourselves to elements
of non-trivial $\mathbb{Z}\times \mathbb{Z}$-degree. Take an
orthonormal basis $\basis{e}_1, \dots, \basis{e}_n$ of $\liealg{g}$
with respect to $g$ with orthonormal dual basis $\basis{e}^1, \dots,
\basis{e}^n$ of $\liealg{g}^*$ with respect to $g^{-1}$. In
particular, we have $\basis{e}_j^* = -\I \basis{e}^j$. Then every
non-zero Hermitian element $h$ has to be the sum of elements of the
form $ a = (-\I )^{i+j} \;\cc{c} \; \basis{e}^{k_1}\wedge \dots \wedge
\basis{e}^{k_j}\wedge \basis{e}_{\ell_i}\wedge \dots \wedge
\basis{e}_{\ell_1} + c \; \basis{e}^{\ell_1}\wedge \dots \wedge
\basis{e}^{\ell_i}\wedge \basis{e}_{k_j} \wedge \dots \wedge
\basis{e}_{k_1} $ with $i,j=0,\dots,n$ not both equal to zero and with
$c\in \mathbb{C}[[\lambda]]\setminus \{0\}$. Choose now $ b = c_1 \;
\basis{e}^{k_1}\wedge \dots \wedge \basis{e}^{k_j} + c_2 \;
\basis{e}^{\ell_1}\wedge \dots \wedge \basis{e}^{\ell_i} $ with $c_1,c_2 \in
\mathbb{C}[[\lambda]]\setminus \{0\}$.
Since $a \circ_\std b = \mu \circ e^{2\I\lambda
\jns(\basis{e}^k)\otimes \ins(\basis{e}_k)} (a\otimes b)$ we
get
\begin{align*}
  \delta_b(a)
  =
  \delta ( b^* \circ_\std a \circ_\std b)
  & =
  (2\I\lambda)^{i+j} (-\I)^i
  \left( (-1)^j \; \cc{c} \cc{c_1}c_2 +  c c_1 \cc{c_2}\right).
\end{align*}
Choosing for example $c_1 = \cc{c}$ as well as $c_2$ such that
$(-1)^j c_2 = \cc{c_2}$ yields $\delta_b(a) \neq 0$.
The above procedure can be easily extended to a general
Hermitian element.
\end{proof}

\begin{remark}
  Even though the complex conjugation yields no involution for the standard
	ordered star product, one can check that it is a well-defined super
	*-involution for the Weyl ordered one, see
	\cite{bordemann.herbig.waldmann:2000a} for a definition. However, in
	this setting one can show that there are no non-trivial positive
	linear functionals.
\end{remark}

%
%
\subsection{Comparison of the Reduced Quantum BRST Algebras}
\label{subsection:ComparisonofReducedQuantumBRSTAlgebras}

Throughout this section assume that
$(M,\star,\group{G},\boldsymbol{J}, C)$ is a Hamiltonian quantum
$\group{G}$-space with regular constraint surface, proper action on
$M$ and Hermitian star product.  Moreover, choose a positive definite
inner product $g$ on the Lie algebra $\liealg{g}$ with induced
involution $^*$ on $\left(\Anti^\bullet
(\liealg{g}^*_\mathbb{C}\oplus\liealg{g}_\mathbb{C})[[\lambda]],
\circ_\std\right)$ as in \eqref{eq:StarInvolutionGrassmann}.

\begin{lemma}
	\label{lemma:QuantumBRSTStarAlgebrawithImaginaryGhost}
	The triple $(\mathcal{A}[[\lambda]],\star_\std,^*)$ is a BRST
	*-algebra with imaginary ghost operator and it has sufficiently
	many positive linear functionals.
\end{lemma}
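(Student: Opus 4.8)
The plan is to verify the two assertions separately: first that $(\mathcal{A}[[\lambda]],\star_\std,{}^*)$ carries the structure of a BRST *-algebra with imaginary ghost operator in the sense of Definition~\ref{defi:BRSTstarAlgebrawithImaginaryGhost}, and then that it has sufficiently many positive linear functionals. For the first part, most of the data is already in place from Section~\ref{section:QuantumBRSTComplexandCohomology}: the product $\star_\std$ turns $\mathcal{A}[[\lambda]]$ into a $\mathbb{Z}_2$-graded associative algebra over $\mathbb{C}[[\lambda]]$, the element $\gamma = \frac{1}{2}\basis{e}^k\wedge\basis{e}_k$ is a ghost charge inducing the ghost number grading, and $\Theta_\std$ is a BRST charge since $\Theta_\std\star_\std\Theta_\std = 0$ and $\Theta_\std\in\mathcal{A}^{(1)}[[\lambda]]$ is odd. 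Thus $(\mathcal{A}[[\lambda]],\gamma,\Theta_\std)$ is a BRST algebra, with the normalization conventions of \eqref{eq:RescaledGhostBRST}.

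It remains to exhibit the graded *-involution and to check the imaginary ghost condition. I would define ${}^*$ as the tensor product of the involution \eqref{eq:StarInvolutionGrassmann} on the Grassmann part with the complex conjugation on $\Cinfty(M)[[\lambda]]$, that is $(a\otimes f)^* = a^*\otimes\cc{f}$. That the Grassmann involution is a graded *-involution for $\circ_\std$ follows from Proposition~\ref{prop:GrassmannStarRep} together with the faithfulness in Proposition~\ref{prop:StandardOrderedRep}: since $\rho_\std$ is a faithful *-representation, the identities $\rho_\std(a^*)=\rho_\std(a)^*$ and $\rho_\std(a\circ_\std b)=\rho_\std(a)\rho_\std(b)$ force $(a\circ_\std b)^* = b^*\circ_\std a^*$. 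Because $\star$ is Hermitian by assumption and the function factor is purely even, a short computation using $\cc{f\star g}=\cc{g}\star\cc{f}$ then shows that ${}^*$ is a graded *-involution for $\star_\std$. Finally, Proposition~\ref{prop:GrassmannStarRep} gives $\gamma^* = -\gamma$, hence $\Gh^* = \I\ad(\gamma^*) = -\Gh$, which is exactly the imaginary ghost condition.

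The main work lies in the second assertion, and I would use the product structure $\mathcal{A}[[\lambda]] = \Anti^\bullet(\liealg{g}_\mathbb{C}^*\oplus\liealg{g}_\mathbb{C})[[\lambda]]\otimes\Cinfty(M)[[\lambda]]$, combining positivity on the two tensor factors. Proposition~\ref{prop:DeformedGrassmannSufficientlyPosFunc} supplies sufficiently many positive functionals on the Grassmann factor, while the Hermitian star product $\star$ on $\Cinfty(M)[[\lambda]]$ has sufficiently many positive functionals by the known results for Hermitian deformations, see \cite{bursztyn.waldmann:2000a,bursztyn.waldmann:2005b}. The key lemma I would invoke is that a tensor product $\omega_1\otimes\omega_2$ of positive functionals is again positive; over $\mathbb{C}[[\lambda]]$ this rests on the Schur product theorem applied to the Gram matrices of the two factors.

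Given a nonzero Hermitian $H\in\mathcal{A}[[\lambda]]$, I would expand $H = \sum_\alpha e_\alpha\otimes f_\alpha$ in a fixed $\mathbb{C}[[\lambda]]$-basis $\{e_\alpha\}$ of the finite-dimensional Grassmann algebra, so that some $f_{\alpha_0}\neq 0$. Since the positive functionals separate the Hermitian elements of the Grassmann factor and this factor is finitely generated free, they span its dual; were $(\omega_1\otimes\id)(H)=\sum_\alpha\omega_1(e_\alpha)f_\alpha$ to vanish for every positive $\omega_1$, it would vanish for the dual-basis functional detecting $e_{\alpha_0}$, forcing $f_{\alpha_0}=0$. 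Hence there is a positive $\omega_1$ with $(\omega_1\otimes\id)(H)\neq 0$, and one checks that this element is again Hermitian in $\Cinfty(M)[[\lambda]]$. By the positivity on the function factor there is then a positive $\omega_2$ with $\omega_2\bigl((\omega_1\otimes\id)(H)\bigr)\neq 0$, so $\omega=\omega_1\otimes\omega_2$ is positive and $\omega(H)\neq 0$. I expect the main obstacle to be precisely this detection step — passing from positive functionals on the two factors to one on the tensor product that does not annihilate $H$ — which is why the positivity of $\omega_1\otimes\omega_2$ and the spanning property of the positive functionals on the Grassmann part are the crucial ingredients.
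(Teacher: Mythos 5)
Your first half coincides with the paper's own argument: the involution is defined by $(a\otimes f)^*=a^*\otimes\cc{f}$, the graded *-property follows from Proposition~\ref{prop:GrassmannStarRep} together with Hermiticity of $\star$, and $\gamma^*=-\gamma$ gives $\Gh^*=-\Gh$. The second half is where you genuinely diverge. The paper does not hunt for separating positive functionals directly; it invokes \cite[Prop.~2.8]{bursztyn.waldmann:2001a} (a unital *-algebra has sufficiently many positive linear functionals if and only if it admits a faithful *-representation on a pre-Hilbert space) and then proves that $\rho=\rho_\std\otimes\pi$ is injective by an explicit computation with the operators \eqref{eq:RhoStdofBasis}, filtering by the minimal antighost degree. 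Your route — tensor a positive functional on the Grassmann factor detecting the offending component with one on $(\Cinfty(M)[[\lambda]],\star)$ — is more hands-on and avoids the representation-theoretic detour, but it pushes the same nontrivial positivity input into a different place: the paper needs the tensor product of pre-Hilbert spaces over $\mathbb{C}[[\lambda]]$ to be a pre-Hilbert space, you need $\omega_1\otimes\omega_2$ to be positive. Neither is automatic over a general ordered ring; both come down to the complete positivity results of \cite{bursztyn.waldmann:2005b}, so you should cite that rather than just gesture at the Schur product theorem, whose usual proof (writing a positive matrix as $\sum_k v_kv_k^*$) itself requires justification over $\mathbb{C}[[\lambda]]$.

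One step of your detection argument is stated incorrectly and needs a patch. From ``the positive functionals separate the Hermitian elements'' you conclude that they \emph{span} the dual of the Grassmann factor; over the ring $\mathbb{C}[[\lambda]]$ this implication fails (a submodule of the dual, e.g.\ $\lambda$ times the dual, can separate points without being everything). The fix is cheap but should be said: separation implies that the positive functionals span the dual after inverting $\lambda$, so for each dual-basis functional $e^{\alpha_0}$ there are $N\in\mathbb{N}_0$ and $c_k\in\mathbb{C}[[\lambda]]$ with $\lambda^N e^{\alpha_0}=\sum_k c_k\,\omega_1^{(k)}$; if $(\omega_1\otimes\id)(H)$ vanished for every positive $\omega_1$ you would get $\lambda^N f_{\alpha_0}=0$ and hence $f_{\alpha_0}=0$ because $\Cinfty(M)[[\lambda]]$ has no $\lambda$-torsion. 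With that repair, and with the complete-positivity citation for $\omega_1\otimes\omega_2$, your argument goes through and yields the same conclusion as the paper's.
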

\begin{proof}
  We immediately get a graded *-involution on the whole quantum BRST
  algebra $\mathcal{A}^{(\bullet)}[[\lambda]]$, again denoted by $^*$,
  via $(\alpha\otimes f)^* = \alpha^* \otimes \cc{f}$.  In particular,
  it follows that
  \begin{equation*}
    \left( (\alpha\otimes f)\star_\std (\beta \otimes g)\right)^*
    =
    (\alpha \circ_\std \beta)^* \otimes \cc{f\star g}
    =
    (\beta^* \circ _\std\alpha^*) \otimes (\cc{g}\star \cc{f}) )
    =
    (\beta \otimes g)^* \star_\std (\alpha \otimes f)^*
  \end{equation*}
  for all $\alpha,\beta \in \Anti^\bullet(\liealg{g}^*_\mathbb{C}
  \oplus \liealg{g}_\mathbb{C})[[\lambda]]$ and $f,g\in
  \Cinfty(M)[[\lambda]]$.  For the BRST charge $\gamma = \frac{1}{2}
  \basis{e}^k \wedge \basis{e}_k$ we have already seen $\gamma^* =
  -\gamma$ in Proposition~\ref{prop:GrassmannStarRep}, thus the ghost number
  derivation $\Gh = \frac{1}{\I\lambda}\ad(\gamma)$ fulfils $ \Gh^*
  = - \Gh.  $ Therefore, we have constructed a graded *-involution
  with imaginary ghost operator.  The only thing remaining to be shown
  is that the quantum BRST algebra has sufficiently many positive
  linear functionals. By
  Proposition~\ref{prop:DeformedGrassmannSufficientlyPosFunc} the Grassmann
  part has sufficiently many positive linear functionals and
  $(\Cinfty(M)[[\lambda]],\star)$ with complex conjugation as
  involution has sufficiently many positive linear functionals, see
  \cite[Prop.~5.3]{bursztyn.waldmann:2000a}. Moreover,
  \cite[Prop.~2.8]{bursztyn.waldmann:2001a} states that a unital
  *-algebra has sufficiently many positive linear functionals if and
  only if it has a faithful *-representation on a pre-Hilbert
  space. Both the Grassmann algebra
  $\left(\Anti^\bullet(\liealg{g}^*_\mathbb{C}\oplus
  \liealg{g}_\mathbb{C})[[\lambda]],\circ_\std\right)$ and the
  functions are unital *-algebras and the Grassmann part has already
  such a *-representation $\rho_\std $
  as discussed in Proposition~\ref{prop:GrassmannStarRep}.  In particular,
  we know
  \begin{equation}
    \label{eq:ProofGrassmannRep}
    \tag{$*$}
  \rho_\std( x_{ij}) \colon
  \Anti^k \liealg{g}_\mathbb{C}^*[[\lambda]]
  \longrightarrow
  \begin{cases}
    \Anti^{k + i - j} \liealg{g}_\mathbb{C}^*[[\lambda]]
    \quad&\text{for}\; k \geq j   \\
    \{0\} &\text{else}
  \end{cases}
  \end{equation}
  for all $x_{ij} \in \Anti^i \liealg{g}_\mathbb{C}^*[[\lambda]]
  \otimes \Anti^j \liealg{g}_\mathbb{C}[[\lambda]]$.  Let $\pi \colon
  \Cinfty(M)[[\lambda]] \longrightarrow \prehilb{H}$ be a faithful
  *-representation of the functions on a pre-Hilbert space
  $\prehilb{H}$. It remains to show that
  \begin{equation*}
    \rho
    =
    \rho_\std \otimes \pi \colon
    \mathcal{A}[[\lambda]]
    \longrightarrow
    \Bounded(\Anti^\bullet \liealg{g}_\mathbb{C}^*[[\lambda]]
    \otimes \prehilb{H})
  \end{equation*}
  is injective. Consider a general element $\sum_{i,j,\alpha_{ij}}
  x_{ij \alpha_{ij}} \otimes F^{ij \alpha_{ij}} \in
  \mathcal{A}[[\lambda]]$, where $\{x_{ij\alpha_{ij}}
  \}_{\alpha_{ij}}$ is a basis of $\Anti^i
  \liealg{g}_\mathbb{C}^*\otimes \Anti^j \liealg{g}_\mathbb{C}$, and
  where $F^{ij\alpha_{ij}}\in \Cinfty(M)[[\lambda]]$.
Let $k$ be the minimal index such that $\sum_{i\alpha_{i k}}
x_{i k \alpha_{ik}}\otimes F^{i k \alpha_{ik}} \neq 0$. Using
 \eqref{eq:ProofGrassmannRep} we have
$\rho_\std(x_{ik\alpha_{ik}}) z \in \Anti^i \liealg{g}^*_\mathbb{C}$
for $z \in \Anti^k\liealg{g}^*_\mathbb{C}[[\lambda]]$,
so the images for different $i=0,1,\dots,n$ are either zero or
linearly independent, allowing us to fix the index $i$.
A straightforward computation shows
\begin{equation*}
  \rho_\std (
    \basis{e}^{j_1}\wedge \dots \wedge \basis{e}^{j_i} \wedge
    \basis{e}_{\ell_k}\wedge \dots \wedge\basis{e}_{\ell_1}
  ) z
  =
  (2\I\lambda)^k \basis{e}^{j_1} \wedge \dots \wedge \basis{e}^{j_i}
  \wedge \ins(\basis{e}_{\ell_k})\wedge \dots \wedge\ins(\basis{e}_{\ell_1})
  z.
\end{equation*}
Choosing now $\phi \in \prehilb{H}$ such that
$\pi\left( F^{r_k \dots r_1}_{p_1\dots p_i}\right) \phi \neq 0$ for
some sets of indices $\{r_1,\dots,r_k\}$ and $\{p_1,\dots,p_i\}$ yields
\begin{align*}
  \rho &\left(
    \basis{e}^{j_1}\wedge \dots \wedge \basis{e}^{j_i} \wedge
    \basis{e}_{\ell_k}\wedge \dots \wedge\basis{e}_{\ell_1}  \otimes
    F^{\ell_k \dots \ell_1}_{j_1\dots j_i}
  \right)\left(
    \basis{e}^{r_1}\wedge \cdots \wedge \basis{e}^{r_k} \otimes \phi
  \right)                                                           \\
  & =
  k! \; (2\I\lambda)^k \basis{e}^{j_1}\wedge \dots \wedge
  \basis{e}^{j_i}\otimes\pi\left(F^{r_k\dots r_1}_{j_1\dots j_i}\right) \phi
  \neq
  0.
\end{align*}
\end{proof}
Now we can define as in the general setting from
Section~\ref{section:AbstractBRSTalgebras} an \emph{adjoint standard
  ordered BRST operator} $\boldsymbol{D}_\std^*$ by
\begin{equation}
  \label{eq:AdjointStandradOrderedBRSTOperator}
  \boldsymbol{D}_\std^*
  =
  \frac{1}{\I \lambda} \ad_\std ( \Theta_\std^*).
\end{equation}
We get two different quantum BRST cohomologies:
on one hand, the usual quantum BRST cohomology
$\boldHBRST^{(\bullet)}(\mathcal{A}[[\lambda]])$ from
Proposition~\ref{prop:QuantumBRSTCohomology}
with corresponding reduced quantum BRST algebra
\begin{equation}
  \mathcal{A}_\red
  =
  \boldHBRST^{(0)}(\mathcal{A}[[\lambda]])
  =
  \frac{\ker\boldsymbol{D}_\std \cap \mathcal{A}^{(0)}[[\lambda]] }
       {\image \boldsymbol{D}_\std \cap \mathcal{A}^{(0)}[[\lambda]]}.
\end{equation}
On the other hand, we have the BRST quotient
$\boldHBRSTtilde^{(\bullet)} (\mathcal{A}[[\lambda]])$
from \eqref{eq:BRSTQuotient}
with corresponding \emph{reduced quantum BRST *-algebra}
\begin{equation}
  \widetilde{\mathcal{A}}_\red
  =
  \boldHBRSTtilde^{(0)}(\mathcal{A}[[\lambda]])
  =
  \frac{\ker\boldsymbol{D}_\std \cap \ker\boldsymbol{D}_\std^* \cap
        \mathcal{A}^{(0)}[[\lambda]] }
   {\image \boldsymbol{D}_\std \cap\image \boldsymbol{D}_\std^*\cap
		\mathcal{A}^{(0)}[[\lambda]]}
\end{equation}
that is indeed a *-algebra by
Lemma~\ref{lemma:ReducedBRSTStarAlgebra}.  Therefore, the natural
question is whether we can compare
$\boldHBRST^{(\bullet)}(\mathcal{A}[[\lambda]])$ with
$\boldHBRSTtilde^{(\bullet)}(\mathcal{A}[[\lambda]])$, in particular
in ghost number zero. The rest of the section consists in the proof of
the following main result.

\begin{theorem}
	\label{thm:AredtildeIsomorphicAred}
	Let $(M,\star,\group{G},\boldsymbol{J},C)$ be a Hamiltonian quantum
	$\group{G}$-space with regular constraint surface, compact
	Lie group and Hermitian star product $\star$. Moreover, choose a
	positive definite inner product on the corresponding Lie algebra
	$\liealg{g}$, inducing the involution $^*$. Then one has
	\begin{equation}
    \label{eq:IsomorphismReducedBRSTAlg}
    \widetilde{\mathcal{A}}_\red
	  \cong
	  \Cinfty(C)^\group{G}[[\lambda]]
	  \cong
	  \mathcal{A}_\red
  \end{equation}
	with isomorphism $\boldsymbol{\iota^*}\colon
	\widetilde{\mathcal{A}}_\red \longrightarrow
	\Cinfty(C)^\group{G}[[\lambda]]$ and inverse $\prol$.
\end{theorem}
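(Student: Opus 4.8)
The plan is to show that the two maps in the statement are mutually inverse bijections between $\widetilde{\mathcal{A}}_\red$ and $\Cinfty(C)^\group{G}[[\lambda]]$; the second isomorphism $\Cinfty(C)^\group{G}[[\lambda]]\cong\mathcal{A}_\red$ is already furnished by Proposition~\ref{prop:QuantumBRSTCohomology}. First I would check that both maps are well defined. In ghost degree zero the deformed restriction annihilates every $\mathcal{A}^{k,k}$ with $k\geq 1$ (these carry a nontrivial $\Anti^\bullet\liealg{g}$-part) and reduces on $\mathcal{A}^{0,0}=\Cinfty(M)[[\lambda]]$ to $\boldsymbol{\iota^*}\colon\Cinfty(M)[[\lambda]]\to\Cinfty(C)[[\lambda]]$; by Proposition~\ref{prop:QuantumBRSTCohomology}~(ii) it maps a $\boldsymbol{D}_\std$-cocycle to a Chevalley--Eilenberg $0$-cocycle, hence into $\Cinfty(C)^\group{G}[[\lambda]]$, and it kills $\image\boldsymbol{D}_\std$, so it descends to $\widetilde{\mathcal{A}}_\red$. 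Conversely, since $\group{G}$ is compact the action is proper, so $\prol$ may be chosen $\group{G}$-equivariant and real; then for $u\in\Cinfty(C)^\group{G}[[\lambda]]$ the prolongation $\prol(u)$ is $\group{G}$-invariant, whence $\boldsymbol{D}_\std\prol(u)=\boldsymbol{\delta}\prol(u)=0$, and using the abstract relation \eqref{eq:Dstara} together with $\prol(u)^*=\prol(\cc u)$ also $\boldsymbol{D}_\std^*\prol(u)=0$. Thus $\prol(u)\in\ker\boldsymbol{D}_\std\cap\ker\boldsymbol{D}_\std^*\cap\mathcal{A}^{(0)}[[\lambda]]$ and defines a class in $\widetilde{\mathcal{A}}_\red$.

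The identity $\boldsymbol{\iota^*}\circ\prol=\id$ is immediate from the defining property $\boldsymbol{\iota^*}\prol=\id$ on $\Anti^\bullet\liealg{g}^*\otimes\Cinfty(C)[[\lambda]]$ in degree zero. For the opposite composition I would use that, in degree zero, the homotopy inverse $\widehat{\boldsymbol{h}}$ of Proposition~\ref{prop:QuantumBRSTCohomology} reduces to $\prol$ (since $\boldsymbol{h}$ vanishes on the $\Cinfty(C)$-part), so $\Psi^{-1}\Psi[a]=[a]$ gives $b:=a-\prol(\boldsymbol{\iota^*}a)\in\image\boldsymbol{D}_\std$ for every $a\in\ker\boldsymbol{D}_\std\cap\mathcal{A}^{(0)}[[\lambda]]$. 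If moreover $a\in\ker\boldsymbol{D}_\std^*$, then $\prol(\boldsymbol{\iota^*}a)\in\ker\boldsymbol{D}_\std^*$ as well, so $b\in\image\boldsymbol{D}_\std\cap\ker\boldsymbol{D}_\std^*\cap\mathcal{A}^{(0)}[[\lambda]]$. Hence the entire theorem reduces to the single claim
\begin{equation*}
  \image\boldsymbol{D}_\std\cap\ker\boldsymbol{D}_\std^*\cap\mathcal{A}^{(0)}[[\lambda]]
  =
  \{0\},
\end{equation*}
which simultaneously forces $b=0$ (so $\prol\circ\boldsymbol{\iota^*}=\id$) and makes the denominator $\image\boldsymbol{D}_\std\cap\image\boldsymbol{D}_\std^*\cap\mathcal{A}^{(0)}[[\lambda]]$ of $\widetilde{\mathcal{A}}_\red$ vanish, since that denominator lies inside the left-hand side.

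To establish this vanishing I would carry out the Hodge-type argument announced in the introduction. The plan is to equip $\mathcal{A}^{(\bullet)}[[\lambda]]$ with a $\Cinfty(C)^\group{G}[[\lambda]]$-valued inner product obtained by tensoring the positive definite Grassmann product $\SP{\argument,\argument}$ from Proposition~\ref{prop:GrassmannStarRep} (for which $\rho_\std$ is a $^*$-representation) with a $\Cinfty(C)^\group{G}[[\lambda]]$-valued pairing on $\Cinfty(M)[[\lambda]]$ defined by Haar-averaging $\boldsymbol{\iota^*}(\cc f\star h)$ over the compact group $\group{G}$. The Hermiticity of $\star$, the $^*$-representation property of $\rho_\std$ and the defining relations of $\boldsymbol{\iota^*}$ are arranged precisely so that $\boldsymbol{D}_\std^*$ is the adjoint of $\boldsymbol{D}_\std$ for this pairing. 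Granting this, any $b=\boldsymbol{D}_\std c\in\ker\boldsymbol{D}_\std^*$ satisfies $\SP{b,b}=\SP{c,\boldsymbol{D}_\std^*b}=0$, and positive definiteness then yields $b=0$.

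The hard part will be the positivity analysis, since the naive function pairing is only positive semidefinite, its radical being governed by the vanishing ideal $\mathcal{I}_C$, so genuine definiteness must be extracted on the relevant subspace rather than on all chains. I would proceed order by order in $\lambda$: the rescaling by $(2\lambda)^k$ in \eqref{eq:ProductonGrassmannScaled} shifts the Grassmann contributions so that, evaluated against a harmonic exact $b$, the leading $\lambda$-order of $\SP{b,b}$ is an honest $\group{G}$-averaged square of the leading symbol of $b$, strictly positive unless that symbol vanishes; an induction on the $\lambda$-degree then forces $b=0$. Verifying the adjointness $\boldsymbol{D}_\std^*=(\boldsymbol{D}_\std)^*$ for the algebra-valued pairing and controlling these leading orders — both relying crucially on compactness of $\group{G}$ through a bi-invariant Haar measure and the equivariant choices of $\prol$ and $S$ — is where I expect the main technical effort to lie.
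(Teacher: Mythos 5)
Your argument is sound, and parallels the paper, up to the point where you reduce everything to the claim $\image\boldsymbol{D}_\std\cap\ker\boldsymbol{D}_\std^*\cap\mathcal{A}^{(0)}[[\lambda]]=\{0\}$: that claim is false, so the second half of your proposal cannot be repaired. Take $\group{G}=U(1)$ acting with a real quantum momentum map $\boldsymbol{J}=J=\basis{e}^1\otimes J_1$ (the strongly invariant case), so that $\Omega=0$ and $\chi=0$. Then $\boldsymbol{D}_\std(\basis{e}_1\otimes 1)=\tfrac{1}{\I\lambda}\left(\basis{e}^1\circ_\std\basis{e}_1+\basis{e}_1\circ_\std\basis{e}^1\right)\otimes J_1=2\,(1\otimes J_1)$ by the Clifford relation, while $\Theta_\std^*=-\I\,\basis{e}_1\otimes J_1$ commutes with $1\otimes J_1$ because $J_1\star J_1-J_1\star J_1=0$, so $\boldsymbol{D}_\std^*(1\otimes J_1)=0$. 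The same computation gives $\boldsymbol{D}_\std^*(\I\,\basis{e}^1\otimes 1)=2\,(1\otimes J_1)$, so $1\otimes J_1$ is a nonzero element even of $\image\boldsymbol{D}_\std\cap\image\boldsymbol{D}_\std^*\cap\mathcal{A}^{(0)}[[\lambda]]$: the denominator of $\widetilde{\mathcal{A}}_\red$ is genuinely nonzero and the quotient cannot be dispensed with. This is also why the Hodge-type positivity argument you sketch has no chance: the only available $\Cinfty(C)^\group{G}[[\lambda]]$-valued pairing on the function part is built from $\boldsymbol{\iota^*}(\cc{f}\star h)$, whose radical already at order $\lambda^0$ contains the ideal generated by the momentum map components --- which is exactly the function part of $\image\boldsymbol{D}_\std\cap\mathcal{A}^{(0)}[[\lambda]]$. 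Hence $\SP{b,b}_\red=0$ carries no information for exact $b$, and rescaling the Grassmann factor by powers of $\lambda$ does not help because the degeneracy sits in the $\Cinfty(M)$-direction.

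What the paper proves instead is strictly weaker and suffices: the element $a-\prol\boldsymbol{\iota^*}a=\tfrac{1}{2}\boldsymbol{D}_\std\widehat{\boldsymbol{h}}a$ need not vanish, but it lies in the denominator $\image\boldsymbol{D}_\std\cap\image\boldsymbol{D}_\std^*$. Applying the involution to the homotopy identity converts $\image\boldsymbol{D}_\std$ into $\image\boldsymbol{D}_\std^*$ via \eqref{eq:Dstara}, and applying the homotopy identity once more to $a^*$ (which again lies in $\ker\boldsymbol{D}_\std\cap\ker\boldsymbol{D}_\std^*$) reduces the whole theorem to the single identity $\cc{\boldsymbol{\iota^*}F^0}=\boldsymbol{\iota^*}\cc{F^0}$ for the $\mathcal{A}^{0,0}$-component $F^0$ of $a$; this is Proposition~\ref{prop:ConditionsforRedAlgIso}. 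That identity is then established not with a pairing on the chains but with the $\Cinfty(C)^\group{G}[[\lambda]]$-valued inner product on $\Cinfty(C)[[\lambda]]$ of Definition~\ref{definition:algebravaluedinnerproduct} and the adjointability \eqref{eq:fbulletIsAdjointable} of the $\bullet$-action, evaluated on the single vector $1\in\Cinfty(C)[[\lambda]]$; compactness of $\group{G}$ enters only through the Haar average. You identified all the right ingredients (homotopy, involution, Haar-averaged pairing), but they must be deployed to place $a-\prol\boldsymbol{\iota^*}a$ in the ideal one quotients by, not to prove a Hodge-type vanishing that fails here.
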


We already know from Proposition~\ref{prop:QuantumBRSTCohomology} that there
is an isomorphism
\begin{equation*}
  \boldsymbol{\iota^*} \colon
  \mathcal{A}_\red
  =
  \frac{\ker \boldsymbol{D}_\std \cap \mathcal{A}^{(0)}[[\lambda]]}
        {\image \boldsymbol{D}_\std \cap \mathcal{A}^{(0)}[[\lambda]] }
  \longrightarrow
  \Cinfty(C)^\group{G}[[\lambda]]
\end{equation*}
with inverse $\widehat{\boldsymbol{h}}\at{\Cinfty(C)^\group{G}[[\lambda]]}
=\prol$, where we understand $\boldsymbol{\iota^*}$ to act on the
representatives of the equivalence classes and $\prol$ to map into
the corresponding equivalence class. We directly observe the following:

\begin{lemma}
  The map
  \begin{equation}
    \label{eq:IotaonReducedBRSTStarAlg}
    \boldsymbol{\iota^*}  \colon
 	  \widetilde{\mathcal{A}}_\red
    \longrightarrow
    \Cinfty(C)^\group{G}[[\lambda]]
  \end{equation}
  is well-defined with right inverse $\prol$. Moreover,
	$\prol$ is also a left inverse if
	\begin{equation}
    \label{eq:DifferenceInIntersectionImages}
    \sum_{i,\alpha_i } \prol\boldsymbol{\iota^*}
    \left( x_{i\alpha_i }\otimes F^{i\alpha_i } \right)
    -
    \sum_{i,\alpha_i } x_{i\alpha_i }\otimes F^{i\alpha_i }
    \in
    \image \boldsymbol{D}_\std \cap \image \boldsymbol{D}_\std^*
  \end{equation}
	holds for any element $\sum_{i,\alpha_i } x_{i\alpha_i }\otimes F^{i\alpha_i }
  \in\ker \boldsymbol{D}_\std \cap \ker \boldsymbol{D}_\std^*
  \cap \mathcal{A}^{(0)}[[\lambda]]$, where
  $\{x_{i\alpha_i }\}_{\alpha_i }$ denotes a basis of $\Anti^i\liealg{g}^*
  \otimes \Anti^i \liealg{g}$, and
  $F^{i\alpha_i }\in \Cinfty(M)[[\lambda]]$ for any $i=0,1,\dots,n$
  and $\alpha_i $.
\end{lemma}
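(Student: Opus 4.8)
The plan is to reduce everything to the isomorphism $\boldsymbol{\iota^*}\colon\mathcal{A}_\red\to\Cinfty(C)^\group{G}[[\lambda]]$ of Proposition~\ref{prop:QuantumBRSTCohomology} and to treat separately the two extra conditions imposed by $\boldsymbol{D}_\std^*$. First I would settle well-definedness by observing that the numerator $\ker\boldsymbol{D}_\std\cap\ker\boldsymbol{D}_\std^*\cap\mathcal{A}^{(0)}[[\lambda]]$ of $\widetilde{\mathcal{A}}_\red$ sits inside the numerator $\ker\boldsymbol{D}_\std\cap\mathcal{A}^{(0)}[[\lambda]]$ of $\mathcal{A}_\red$, and likewise $\image\boldsymbol{D}_\std\cap\image\boldsymbol{D}_\std^*\cap\mathcal{A}^{(0)}[[\lambda]]\subseteq\image\boldsymbol{D}_\std\cap\mathcal{A}^{(0)}[[\lambda]]$. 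Thus the assignment $[a]\mapsto\boldsymbol{\iota^*}(a)$ is the composition of the canonical morphism of Proposition~\ref{prop:InclBRSTQuotienttoCohomology} in ghost number zero with the isomorphism $\boldsymbol{\iota^*}\colon\mathcal{A}_\red\to\Cinfty(C)^\group{G}[[\lambda]]$, and its well-definedness is inherited: any two representatives differ by an element of $\image\boldsymbol{D}_\std\cap\mathcal{A}^{(0)}[[\lambda]]$, on which $\boldsymbol{\iota^*}$ already vanishes.

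For the right-inverse property I would verify that $\prol(c)$ with $c\in\Cinfty(C)^\group{G}[[\lambda]]$ genuinely lands in the numerator of $\widetilde{\mathcal{A}}_\red$ and that $\boldsymbol{\iota^*}\prol=\id$. The identity $\boldsymbol{\iota^*}\prol=\id$ is one of the defining properties of the deformed restriction map, and $\boldsymbol{D}_\std\prol(c)=0$ is exactly the content of Proposition~\ref{prop:QuantumBRSTCohomology} that $[\prol(c)]$ defines a class in $\boldHBRST^{(0)}$. The only new point is $\boldsymbol{D}_\std^*\prol(c)=0$: since $\prol(c)$ is purely functional and of even degree, one has $\prol(c)^*=\prol(\overline{c})$ because the classical prolongation commutes with complex conjugation, and $\overline{c}$ is again $\group{G}$-invariant. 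Substituting this into \eqref{eq:Dstara} yields $\boldsymbol{D}_\std^*\prol(c)=(\boldsymbol{D}_\std\prol(\overline{c}))^*=0$, so $\prol(c)\in\ker\boldsymbol{D}_\std\cap\ker\boldsymbol{D}_\std^*\cap\mathcal{A}^{(0)}[[\lambda]]$ as required.

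Finally, for the left-inverse statement I would simply unwind the definition of the quotient: $\prol$ is a left inverse as soon as $[\prol\boldsymbol{\iota^*}(a)]=[a]$ in $\widetilde{\mathcal{A}}_\red$ for every representative $a$ of the numerator, that is, as soon as the difference $\prol\boldsymbol{\iota^*}(a)-a$ lies in the denominator $\image\boldsymbol{D}_\std\cap\image\boldsymbol{D}_\std^*\cap\mathcal{A}^{(0)}[[\lambda]]$; expanding $a$ in the basis $\{x_{i\alpha_i}\}$ this is precisely condition \eqref{eq:DifferenceInIntersectionImages}. Here I would emphasize that the isomorphism for $\mathcal{A}_\red$ already forces $\prol\boldsymbol{\iota^*}(a)-a\in\image\boldsymbol{D}_\std$, so the only genuine additional demand is simultaneous membership in $\image\boldsymbol{D}_\std^*$.

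The hard part is exactly this last membership: passing from $\image\boldsymbol{D}_\std$ to the intersection $\image\boldsymbol{D}_\std\cap\image\boldsymbol{D}_\std^*$ cannot be decided at this purely homological level, as the general theory only provides the canonical morphism $I_\mathcal{A}$ without any control over its surjectivity. I expect this to be the main obstacle of the whole comparison, and it is precisely the point where the $\Cinfty(C)^\group{G}[[\lambda]]$-valued inner product together with its attendant Hodge-type decomposition must enter in the subsequent steps. Accordingly, at this stage I would leave \eqref{eq:DifferenceInIntersectionImages} standing as the hypothesis to be established later, the present lemma serving to isolate it as the sole remaining ingredient for the isomorphism in Theorem~\ref{thm:AredtildeIsomorphicAred}.
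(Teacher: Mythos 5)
Your proposal is correct and follows essentially the same route as the paper: well-definedness from the vanishing of $\boldsymbol{\iota^*}$ on the denominator, the right-inverse property from $\boldsymbol{\iota^*}\prol=\id$ together with $\boldsymbol{D}_\std\prol(\phi)=0$ and $\boldsymbol{D}_\std^*\prol(\phi)=(\boldsymbol{D}_\std\prol(\cc{\phi}))^*=0$ via \eqref{eq:Dstara} and the $\group{G}$-invariance of $\cc{\phi}$, and the left-inverse claim by unwinding the quotient. Your closing observation that only the membership in $\image\boldsymbol{D}_\std^*$ is genuinely new, and that this is where the algebra-valued inner product must enter, matches exactly how the paper proceeds in Proposition~\ref{prop:ConditionsforRedAlgIso} and Theorem~\ref{thm:AredtildeIsomorphicAred}.
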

\begin{proof}
  The map~\eqref{eq:IotaonReducedBRSTStarAlg} is well-defined as
  $\boldsymbol{\iota^*}$ vanishes on $ \image \boldsymbol{D}_\std \cap
  \image \boldsymbol{D}^*_\std \cap \mathcal{A}^{(0)}[[\lambda]]$.
  Moreover, since $\phi \in \Cinfty(C)^\group{G}[[\lambda]]$ implies
  $\cc{\phi}\in \Cinfty(C)^\group{G}[[\lambda]]$ and since
  $\boldsymbol{\delta}=\delta$, we have $\boldsymbol{\iota^*}\prol
  =\id_{\Cinfty(C)[[\lambda]]}$ and
  \begin{align*}
    \boldsymbol{D}_\std (\prol (\phi))
    & =
    (\boldsymbol{\delta}+2\boldsymbol{\del})(\prol (\phi))
    =
    \prol \delta^c \phi
    =
    0,                       \\
    \boldsymbol{D}_\std^* (\prol (\phi ))
    & =
    \left((\boldsymbol{\delta}+2\boldsymbol{\del})
    \left(\prol\left( \cc{\phi}\right)\right)\right)^*
    =
    \left(\prol \delta^c \cc{\phi}\right)^* = 0.
  \end{align*}
  So $\prol$ is still a well-defined right inverse of
  $\boldsymbol{\iota^*}$.
\end{proof}

These conditions can be further simplified by exploiting the chain
homotopy from Proposition~\ref{prop:QuantumBRSTCohomology}.

\begin{proposition}
	\label{prop:ConditionsforRedAlgIso}
	Let $(M,\star,\group{G},\boldsymbol{J},C)$ be a Hamiltonian quantum
	$\group{G}$-space with regular constraint surface, proper action on
	$M$ and Hermitian star product $\star$. Moreover, let $g$ be a
	positive definite inner product on $\liealg{g}$ inducing the
	involution $^*$ via \eqref{eq:StarInvolutionGrassmann}.
	Then $\boldsymbol{\iota^*} \colon \widetilde{\mathcal{A}}_\red
	\longrightarrow \Cinfty(C)^\group{G}[[\lambda]]$ is an isomorphism
	with inverse $\prol$ if
	\begin{equation}
	  \label{eq:IotaCommutesWithCConFzero}
 	  \cc{\boldsymbol{\iota^*}F^0}
	  =
	  \boldsymbol{\iota^*}\cc{F^0}
	\end{equation}
	for $\sum_{i,\alpha_i } x_{i\alpha_i }\otimes F^{i\alpha_i }
	\in\ker \boldsymbol{D}_\std \cap \ker \boldsymbol{D}_\std^*
	\cap \mathcal{A}^{(0)}[[\lambda]]$
	with $F^0 = x_{0\alpha_0} \otimes
	F^{0\alpha_0} \in \Cinfty(M)[[\lambda]]$ as above.
\end{proposition}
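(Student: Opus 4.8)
The plan is to verify the single sufficient condition \eqref{eq:DifferenceInIntersectionImages} isolated in the preceding lemma: for every cocycle $a = \sum_{i,\alpha_i} x_{i\alpha_i}\otimes F^{i\alpha_i} \in \ker\boldsymbol{D}_\std\cap\ker\boldsymbol{D}_\std^*\cap\mathcal{A}^{(0)}[[\lambda]]$ I must show that $\prol\boldsymbol{\iota^*}a - a$ lies in $\image\boldsymbol{D}_\std\cap\image\boldsymbol{D}_\std^*$. The membership in $\image\boldsymbol{D}_\std$ is essentially free from the chain homotopy $\widehat{\boldsymbol{h}} = \prol + \boldsymbol{h}$ of Proposition~\ref{prop:QuantumBRSTCohomology}. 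Evaluating $\widehat{\boldsymbol{D}}_\std\widehat{\boldsymbol{h}} + \widehat{\boldsymbol{h}}\widehat{\boldsymbol{D}}_\std = 2\id$ on $a$, and using that on $\mathcal{A}[[\lambda]]$ both $\boldsymbol{\delta}^c$ and $\prol$ vanish while $\boldsymbol{h}(a)$ has positive antighost degree and is therefore annihilated by $\boldsymbol{\iota^*}$, I expect the clean identity
\begin{equation*}
  \prol\boldsymbol{\iota^*}a - a = -\tfrac{1}{2}\boldsymbol{D}_\std\boldsymbol{h}(a) \in \image\boldsymbol{D}_\std .
\end{equation*}

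For membership in $\image\boldsymbol{D}_\std^*$ I would run the identical argument on $a^*$. By Lemma~\ref{lemma:SectionImagesStarSubideal} the assumption $a\in\ker\boldsymbol{D}_\std\cap\ker\boldsymbol{D}_\std^*$ forces $a^*\in\ker\boldsymbol{D}_\std$, so the same homotopy identity holds with $a$ replaced by $a^*$. Applying the involution to it and rewriting $(\boldsymbol{D}_\std\boldsymbol{h}(a^*))^*$ through \eqref{eq:Dstara} — where, since $\boldsymbol{h}$ lowers the ghost number by one, the element $\boldsymbol{h}(a^*)$ is odd and \eqref{eq:Dstara} contributes a sign $-1$ — I expect
\begin{equation*}
  (\prol\boldsymbol{\iota^*}a^*)^* - a = \tfrac{1}{2}\boldsymbol{D}_\std^*\big((\boldsymbol{h}(a^*))^*\big) \in \image\boldsymbol{D}_\std^* .
\end{equation*}

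It then remains to identify $(\prol\boldsymbol{\iota^*}a^*)^*$ with $\prol\boldsymbol{\iota^*}a$, and this is exactly the point at which hypothesis \eqref{eq:IotaCommutesWithCConFzero} is used. Because $\boldsymbol{\iota^*}$ kills every summand of positive antighost degree, only the antighost-zero contribution survives, so $\boldsymbol{\iota^*}a = \boldsymbol{\iota^*}F^0$; and since the graded involution preserves the bidegree $(i,i)$ of each basis element $x_{i\alpha_i}$, the surviving part of $a^*$ is $\cc{F^0}$, whence $\boldsymbol{\iota^*}a^* = \boldsymbol{\iota^*}\cc{F^0}$. As $\prol\boldsymbol{\iota^*}F^0\in\Cinfty(M)[[\lambda]]$, where the involution is ordinary complex conjugation, and as $\prol$ may be chosen to commute with complex conjugation, I obtain
\begin{equation*}
  (\prol\boldsymbol{\iota^*}a)^* = \cc{\prol\boldsymbol{\iota^*}F^0} = \prol\cc{\boldsymbol{\iota^*}F^0} = \prol\boldsymbol{\iota^*}\cc{F^0} = \prol\boldsymbol{\iota^*}a^* ,
\end{equation*}
the step from $\prol\cc{\boldsymbol{\iota^*}F^0}$ to $\prol\boldsymbol{\iota^*}\cc{F^0}$ being precisely \eqref{eq:IotaCommutesWithCConFzero}. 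Substituting $\prol\boldsymbol{\iota^*}a = (\prol\boldsymbol{\iota^*}a^*)^*$ into the second displayed identity converts it into $\prol\boldsymbol{\iota^*}a - a \in \image\boldsymbol{D}_\std^*$, and combining with the first gives $\prol\boldsymbol{\iota^*}a - a \in \image\boldsymbol{D}_\std\cap\image\boldsymbol{D}_\std^*$. By the preceding lemma $\prol$ is then a two-sided inverse of $\boldsymbol{\iota^*}$, proving the claim.

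I expect the main obstacle to be neither the homotopy nor the invariance step, both of which are formal, but the careful parity and sign bookkeeping when transporting the $\image\boldsymbol{D}_\std$ statement for $a^*$ into an $\image\boldsymbol{D}_\std^*$ statement via \eqref{eq:Dstara}; it is the odd parity of $\boldsymbol{h}(a^*)$ together with the compatibility \eqref{eq:Dstara} of $\boldsymbol{D}_\std^*$ with the involution that makes the two images line up. Conceptually, the proposition isolates the fact that the only genuine obstruction to $\boldsymbol{\iota^*}$ being a $^*$-isomorphism in degree zero is the failure of $\boldsymbol{\iota^*}$ to commute with complex conjugation on the scalar part $F^0$, which is what must subsequently be established for compact $\group{G}$.
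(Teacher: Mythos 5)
Your proposal is correct and follows essentially the same route as the paper: the homotopy identity $\widehat{\boldsymbol{D}}_\std\widehat{\boldsymbol{h}}+\widehat{\boldsymbol{h}}\widehat{\boldsymbol{D}}_\std=2\id$ gives $\prol\boldsymbol{\iota^*}a-a=-\tfrac{1}{2}\boldsymbol{D}_\std\boldsymbol{h}(a)\in\image\boldsymbol{D}_\std$, the involution together with \eqref{eq:Dstara} converts the analogous identity for $a^*$ into an $\image\boldsymbol{D}_\std^*$ statement, and hypothesis \eqref{eq:IotaCommutesWithCConFzero} is used exactly where the paper uses it, to identify $(\prol\boldsymbol{\iota^*}a^*)^*$ with $\prol\boldsymbol{\iota^*}a$. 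The only difference is bookkeeping (you conjugate the identity for $a^*$ where the paper conjugates the identity for $a$), and your sign analysis via the odd parity of $\boldsymbol{h}(a^*)$ is consistent with the paper's appeal to the definition of $\boldsymbol{D}_\std^*$.
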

\begin{proof}
  We consider at first the augmented standard ordered BRST operator
  $\widehat{\boldsymbol{D}}_\std = \boldsymbol{D}_\std +
  \boldsymbol{\del}^c + 2 \boldsymbol{\iota^*}$.  We know from
  Proposition~\ref{prop:QuantumBRSTCohomology} that
  $  \widehat{\boldsymbol{D}}_\std\widehat{\boldsymbol{h}}
    +   \widehat{\boldsymbol{h}}\widehat{\boldsymbol{D}}_\std  =  2\id$,
  which entails
  \begin{align*}
    \sum_{i,\alpha_i }\prol \boldsymbol{\iota^*}
    \left( x_{i\alpha_i }\otimes F^{i\alpha_i }\right)
    =
    \frac{1}{2}\sum_{i,\alpha_i }\widehat{\boldsymbol{h}}
    \widehat{\boldsymbol{D}}_\std \left(
    x_{i\alpha_i }\otimes F^{i\alpha_i }
    \right)
    =
    \sum_{i,\alpha_i } \left(
    x_{i\alpha_i }\otimes F^{i\alpha_i }
    \right)
    -
    \frac{1}{2}\sum_{i,\alpha_i }\boldsymbol{D}_\std
    \widehat{\boldsymbol{h}} \left(
    x_{i\alpha_i }\otimes F^{i\alpha_i }
    \right)
  \end{align*}
  as $\widehat{\boldsymbol{D}}_\std \widehat{\boldsymbol{h}}
  \left(\sum_{i,\alpha_i } x_{i\alpha_i }\otimes F^{i\alpha_i }\right) =
  \boldsymbol{D}_\std \widehat{\boldsymbol{h}} \left( \sum_{i,\alpha_i
  }x_{i\alpha_i }\otimes F^{i\alpha_i }\right)$.  Applying the
  *-involution yields
  \begin{align*}
    \sum_{i,\alpha_i }\cc{\prol\boldsymbol{\iota^*}
      \left( x_{i\alpha_i }\otimes F^{i\alpha_i }\right)}
    &=
    \sum_{i,\alpha_i }\left( x_{i\alpha_i }\otimes F^{i\alpha_i }\right)^*
    +
    \frac{1}{2}\sum_{i,\alpha_i }\boldsymbol{D}^*_\std
    \left(
    \widehat{\boldsymbol{h}}
    \left(
    x_{i\alpha_i }\otimes F^{i\alpha_i }
    \right)
    \right)^*
  \end{align*}
  by the definition of $\boldsymbol{D}^*_\std$.  Because of
  $\left(\sum_{i,\alpha_i }x_{i\alpha_i }\otimes F^{i\alpha_i }\right)^*
  \in\ker \boldsymbol{D}_\std \cap \ker \boldsymbol{D}_\std^* \cap
  \mathcal{A}^{(0)}[[\lambda]]$ we also get
  \begin{align*}
    \sum_{i,\alpha_i }\left( x_{i\alpha_i }\otimes F^{i\alpha_i }\right)^*
    & =
    \sum_{i,\alpha_i }\prol\boldsymbol{\iota^*}
    \left( x_{i\alpha_i }\otimes F^{i\alpha_i }\right)^*
    +
    \frac{1}{2}\sum_{i,\alpha_i }\boldsymbol{D}_\std \widehat{\boldsymbol{h}}
    \left( x_{i\alpha_i }\otimes F^{i\alpha_i }\right)^*.
  \end{align*}
  Thus to prove the desired \eqref{eq:DifferenceInIntersectionImages}
  it suffices to show
  \begin{equation*}
    \sum_{i,\alpha_i }\cc{\prol\boldsymbol{\iota^*}
      \left( x_{i\alpha_i }\otimes F^{i\alpha_i }\right)}
    -
    \sum_{i,\alpha_i }\prol\boldsymbol{\iota^*}
    \left( x_{i\alpha_i }\otimes F^{i\alpha_i }\right)^*
    \in
    \image \boldsymbol{D}_\std^*,
  \end{equation*}
  which is fulfilled if $\cc{\boldsymbol{\iota^*}F^0} =
  \boldsymbol{\iota^*}\cc{F^0}$.
\end{proof}

In general we do not know if $F^0$ is $\group{G}$-invariant, i.e.
$\boldsymbol{\delta}F^0=\delta F^0=0$, as the higher orders of
$\sum_{i, \alpha_i }x_{i\alpha_i } \otimes F^{i\alpha_i }$ could cancel
this term under $\boldsymbol{D}_\std$.  Hence we can not apply
\cite[Cor.~4.6]{gutt.waldmann:2010a}, giving exactly the property
$\boldsymbol{\iota^*}\cc{f}=\cc{\boldsymbol{\iota^*}f}$ for invariant
functions $f\in \Cinfty(M)[[\lambda]]$. The idea to check
\eqref{eq:IotaCommutesWithCConFzero} is to construct an inner product
on $\Cinfty(C)[[\lambda]]$ with values in
$\Cinfty(C)^\group{G}[[\lambda]]$ and a corresponding *-representation
of $(\Cinfty(M)[[\lambda]],\star)$, see
\cite[Def.~5.4]{gutt.waldmann:2010a}.

\begin{definition}[Algebra-valued inner product]
	\label{definition:algebravaluedinnerproduct}
	Let $\group{G}$ be a compact Lie group.
	The $\Cinfty(C)^\group{G}[[\lambda]]$-\emph{valued inner product}
	on $\Cinfty(C)[[\lambda]]$ is pointwise defined by
	\begin{equation}
	  \label{eq:SPred}
	  \SP{\phi, \psi}_\red (c)
	  =
	  \int_\group{G}
	  \left(
	    \boldsymbol{\iota^*}
		  \left(
	      \cc{\prol(\phi)} \star \prol(\psi)
	    \right)
	  \right)
	  (\Phi_{g^{-1}}(c))
	  \D^\lefttriv g
	\end{equation}
	for all $\phi, \psi \in \Cinfty(C)[[\lambda]]$ and $c\in C$, where
	$\D^\lefttriv g$ denotes the left invariant Haar measure.
\end{definition}
Then $\SP{\argument,\argument}_\red$ is well-defined,
$\mathbb{C}[[\lambda]]$-sesquilinear and can be
rewritten in the following way, see
\cite[Lemmas~5.6, 5.8 and 5.9]{gutt.waldmann:2010a}.

\begin{proposition}
	\label{prop:AlgebraValuedInnerProduct}
	The map $\SP{\argument, \argument}_\red$ defines a non-degenerate
	inner product on $\Cinfty(C)[[\lambda]]$ with values in the
	invariant functions $\Cinfty(C)^\group{G}[[\lambda]]$. One
	can rewrite it in an alternative way
	\begin{align}
	  \label{eq:SPredsimple}
	  \SP{\phi, \psi}_\red
	  =
	  \boldsymbol{\iota^*} \int_{\group{G}}
	  \Phi^*_{g^{-1}}
	  \left(
	    \cc{\prol(\phi)} \star \prol(\psi)
	  \right)
	  \D^\lefttriv g
	  =
	  \boldsymbol{\iota^*} \int_{ \group{G}}
	  \cc{\prol(\Phi^*_{g^{-1}}\phi)}
	  \star
	  \prol(\Phi^*_{g^{-1}} \psi)
	  \D^\lefttriv g.
	\end{align}
	In particular, one has for all $\phi,\psi \in \Cinfty(C)[[\lambda]]$
	\begin{equation}
 	  \label{eq:SPredHermitian}
	  \cc{\SP{\phi,\psi}}_\red
	  =
	  \SP{\psi,\phi}_\red.
	\end{equation}
\end{proposition}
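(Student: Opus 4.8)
The plan is to establish the two reformulations in \eqref{eq:SPredsimple} first, since both the invariance of the values and the Hermitian symmetry drop out of them. Write $H = \cc{\prol(\phi)} \star \prol(\psi) \in \Cinfty(M)[[\lambda]]$. For the first equality I would use that, the action being proper (here even compact), the maps $S$ and $\prol$ may be chosen $\group{G}$-equivariant, so that $\boldsymbol{\iota^*} = \iota^* \circ S$ intertwines the actions, i.e. $\boldsymbol{\iota^*}\circ \Phi_g^* = \Phi_g^* \circ \boldsymbol{\iota^*}$ (with $\Phi_g^*$ acting on $\Cinfty(M)$ on the left and on $\Cinfty(C)$ on the right, the latter making sense because $\group{G}$ preserves $C$). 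Since moreover $\boldsymbol{\iota^*}$ is $\mathbb{C}[[\lambda]]$-linear, being a formal series of differential operators, it commutes with the Haar integral over the compact group $\group{G}$. Rewriting the pointwise definition \eqref{eq:SPred} as $(\boldsymbol{\iota^*}H)(\Phi_{g^{-1}}(c)) = (\Phi_{g^{-1}}^*\boldsymbol{\iota^*}H)(c) = (\boldsymbol{\iota^*}\Phi_{g^{-1}}^*H)(c)$ and pulling $\boldsymbol{\iota^*}$ out of the integral then yields the first form. For the second equality I would combine the $\group{G}$-invariance of $\star$, that is $\Phi_{g^{-1}}^*(a\star b)=\Phi_{g^{-1}}^*a \star \Phi_{g^{-1}}^*b$, the equivariance of the prolongation $\Phi_{g^{-1}}^*\prol(\phi)=\prol(\Phi_{g^{-1}}^*\phi)$, and the commutation of complex conjugation with pullback, which together turn $\Phi_{g^{-1}}^*H$ into $\cc{\prol(\Phi_{g^{-1}}^*\phi)}\star\prol(\Phi_{g^{-1}}^*\psi)$.

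The invariance of the values, $\SP{\phi,\psi}_\red \in \Cinfty(C)^\group{G}[[\lambda]]$, I would then read off from the first form in \eqref{eq:SPredsimple}: the $M$-function $\int_\group{G}\Phi_{g^{-1}}^*H\,\D^\lefttriv g$ is $\group{G}$-invariant by left invariance of the Haar measure, since applying $\Phi_h^*$ gives $\int_\group{G}\Phi_{g^{-1}h}^*H\,\D^\lefttriv g$ and the substitution $g\mapsto hg$ restores the original integral. As $\boldsymbol{\iota^*}$ is equivariant it maps $\group{G}$-invariant functions on $M$ to $\group{G}$-invariant functions on $C$, so the result lands in $\Cinfty(C)^\group{G}[[\lambda]]$.

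For the Hermitian symmetry \eqref{eq:SPredHermitian} the crucial point is that the averaged integrand $\int_\group{G}\Phi_{g^{-1}}^*H\,\D^\lefttriv g$ is already $\group{G}$-invariant, so that \cite[Cor.~4.6]{gutt.waldmann:2010a} applies and $\boldsymbol{\iota^*}$ commutes with complex conjugation on it. Conjugating the first form, moving the bar inside the integral, and using the Hermiticity of $\star$ to rewrite $\cc{H}=\cc{\cc{\prol(\phi)}\star\prol(\psi)}$ as $\cc{\prol(\psi)}\star\prol(\phi)$ then gives exactly $\SP{\psi,\phi}_\red$. Non-degeneracy I would obtain by passing to the classical limit $\lambda=0$, where $\boldsymbol{\iota^*}$ reduces to $\iota^*$ and $\star$ to the pointwise product, so that $\SP{\phi,\phi}_\red$ has leading term $\iota^*\int_\group{G}\Phi_{g^{-1}}^*(\abs{\phi}^2)\,\D^\lefttriv g$; evaluated at $c\in C$ this is $\int_\group{G}\abs{\phi(\Phi_{g^{-1}}(c))}^2\,\D^\lefttriv g$, which is strictly positive whenever $\phi(c)\neq 0$. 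Hence the pairing is positive definite, in particular non-degenerate, at lowest order, and a formal deformation of a non-degenerate $\mathbb{C}$-pairing remains non-degenerate over $\mathbb{C}[[\lambda]]$.

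The main obstacle I expect is not the bookkeeping with pullbacks and the Haar measure, which is routine once equivariance is in place, but the two inputs that $\boldsymbol{\iota^*}$ intertwines the action and that it commutes with conjugation on invariants. The former rests on the equivariant choice of $S$ and $\prol$, which is precisely where properness (here compactness) is used; the latter is the genuinely nontrivial analytic fact \cite[Cor.~4.6]{gutt.waldmann:2010a}, and it is essential that it is only ever applied to the averaged, hence invariant, integrand --- exactly the step that fails for a general non-invariant function and that motivates introducing the algebra-valued inner product in the first place.
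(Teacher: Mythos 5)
Your argument is correct and follows essentially the route the paper intends: the paper gives no inline proof but defers to \cite[Lemmas~5.6, 5.8 and 5.9]{gutt.waldmann:2010a}, whose strategy is exactly what you reconstruct (equivariant choice of $\prol$ and $S$ so that $\boldsymbol{\iota^*}$ intertwines the action, invariance of $\star$ and left invariance of the Haar measure for \eqref{eq:SPredsimple}, and \cite[Cor.~4.6]{gutt.waldmann:2010a} applied only to the averaged, hence invariant, integrand for \eqref{eq:SPredHermitian}). Your closing observation that Cor.~4.6 may only be invoked after averaging is precisely the point the paper itself emphasizes, and your lowest-order argument for non-degeneracy is sound.
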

Recall the left action $\bullet$ of $(\Cinfty(M)[[\lambda]],\star)$ on
$\Cinfty(C)[[\lambda]]$ from \cite[Def.~3.7]{gutt.waldmann:2010a} that
is given by
\begin{equation}
  \label{eq:LeftRepofMonC}
  \Cinfty(M)[[\lambda]] \times \Cinfty(C)[[\lambda]] \ni (f,\phi)
  \longmapsto f \bullet \phi = \boldsymbol{\iota^*} (f \star \prol
  (\phi)) \in \Cinfty(C)[[\lambda]].
\end{equation}
The key point is now that this action yields a $^*$-representation, see
\cite[Prop.~5.11]{gutt.waldmann:2010a}.

\begin{proposition}
	\label{proposition:LeftModuleIsRepresentation}
	The action $\bullet$ is a $^*$-representation of
  $(\Cinfty(M)[[\lambda]], \star)$ on $\Cinfty(C)[[\lambda]]$
  with respect to the inner product $\SP{\argument,\argument}_\red$,
  i.e. we have for all $\phi, \psi \in \Cinfty(C)[[\lambda]]$
  and $f \in \Cinfty(M)[[\lambda]]$
	\begin{equation}
	  \label{eq:fbulletIsAdjointable}
	  \SP{\phi, f \bullet \psi}_\red = \SP{\cc{f} \bullet \phi,
            \psi}_\red.
	\end{equation}
\end{proposition}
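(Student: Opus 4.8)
The plan is to prove that both sides of \eqref{eq:fbulletIsAdjointable} are equal to $\boldsymbol{\iota^*}$ applied to one and the same element of $\Cinfty(M)[[\lambda]]$, namely
\[
  Y_f
  =
  \int_\group{G}\Phi^*_{g^{-1}}\left(\cc{\prol(\phi)}\star f\star\prol(\psi)\right)\D^\lefttriv g,
\]
which is $\group{G}$-invariant because it is a Haar average over the \emph{compact} group $\group{G}$, exactly as in the invariance statement of Proposition~\ref{prop:AlgebraValuedInnerProduct}. If this is achieved, the claimed adjointness is immediate.

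First I would show that $\SP{\phi, f\bullet\psi}_\red = \boldsymbol{\iota^*}Y_f$. Starting from the second expression in \eqref{eq:SPredsimple} and using that the action $\bullet$ from \eqref{eq:LeftRepofMonC} is $\group{G}$-equivariant (since $\prol$ and $\boldsymbol{\iota^*}$ can be chosen equivariant and $\star$ is invariant), one has $\Phi^*_{g^{-1}}(f\bullet\psi)=(\Phi^*_{g^{-1}}f)\bullet(\Phi^*_{g^{-1}}\psi)$. As $\boldsymbol{\iota^*}$ commutes with the $\group{G}$-integral, the integrand becomes $\boldsymbol{\iota^*}(\cc{\prol(\Phi^*_{g^{-1}}\phi)}\star\prol((\Phi^*_{g^{-1}}f)\bullet(\Phi^*_{g^{-1}}\psi)))$, which I would rewrite by means of the module associativity $a\bullet(a'\bullet\chi)=(a\star a')\bullet\chi$ of the left $\star$-module $\bullet$ (from \cite{gutt.waldmann:2010a}) as $\boldsymbol{\iota^*}(\cc{\prol(\Phi^*_{g^{-1}}\phi)}\star(\Phi^*_{g^{-1}}f)\star\prol(\Phi^*_{g^{-1}}\psi))$. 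Finally the $\group{G}$-equivariance of $\prol$ and the invariance of $\star$ identify this integrand with $\boldsymbol{\iota^*}\Phi^*_{g^{-1}}(\cc{\prol\phi}\star f\star\prol\psi)$, and pulling $\boldsymbol{\iota^*}$ back out of the integral yields precisely $\boldsymbol{\iota^*}Y_f$.

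Then I would close the loop using the Hermiticity of the inner product. Applying the same computation with $(\phi,\psi,f)$ replaced by $(\psi,\phi,\cc f)$ gives $\SP{\psi, \cc f\bullet\phi}_\red=\boldsymbol{\iota^*}Y'$ with $Y'=\int_\group{G}\Phi^*_{g^{-1}}(\cc{\prol\psi}\star\cc f\star\prol\phi)\D^\lefttriv g$, which is again $\group{G}$-invariant. By \eqref{eq:SPredHermitian} one has $\SP{\cc f\bullet\phi,\psi}_\red=\cc{\SP{\psi, \cc f\bullet\phi}_\red}=\cc{\boldsymbol{\iota^*}Y'}$, and since $Y'$ is invariant the decisive \cite[Cor.~4.6]{gutt.waldmann:2010a} permits commuting the conjugation past $\boldsymbol{\iota^*}$, giving $\boldsymbol{\iota^*}\cc{Y'}$. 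The Hermiticity of $\star$ (together with $\cc{\cc{\,\cdot\,}}=\id$) then identifies $\cc{Y'}=Y_f$, whence $\SP{\cc f\bullet\phi,\psi}_\red=\boldsymbol{\iota^*}Y_f=\SP{\phi, f\bullet\psi}_\red$.

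The main obstacle is the first step, namely \emph{pulling $f$ into the symmetric middle position} of the integrand: this is exactly where the left $\star$-module structure of $\bullet$ is indispensable. A further subtlety that must be respected is that $\boldsymbol{\iota^*}$ is in general \emph{not} real on all of $\Cinfty(M)[[\lambda]]$, so the conjugation-commuting step is legitimate only because the Haar average makes $Y'$ genuinely $\group{G}$-invariant, which is precisely the regime covered by \cite[Cor.~4.6]{gutt.waldmann:2010a}. This reproduces \cite[Prop.~5.11]{gutt.waldmann:2010a}, from which the statement may alternatively be quoted directly.
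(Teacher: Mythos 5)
Your argument is correct and is essentially the proof of \cite[Prop.~5.11]{gutt.waldmann:2010a}, which is all the paper itself offers here (it states the proposition and cites that reference without reproving it): you reduce both sides to $\boldsymbol{\iota^*}$ of the same Haar average $\int_\group{G}\Phi^*_{g^{-1}}(\cc{\prol\phi}\star f\star\prol\psi)\D^\lefttriv g$ using the left-module property of $\bullet$, the equivariance of $\prol$ and $\boldsymbol{\iota^*}$, the Hermiticity of $\star$, and \cite[Cor.~4.6]{gutt.waldmann:2010a} applied to the invariant average. Your remark that the conjugation may only be commuted past $\boldsymbol{\iota^*}$ \emph{after} averaging is exactly the right point of care.
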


Now we can finally prove Theorem~\ref{thm:AredtildeIsomorphicAred}.
\begin{proof}[of Theorem~\ref{thm:AredtildeIsomorphicAred}]
By Proposition~\ref{prop:ConditionsforRedAlgIso}
it suffices to show
\begin{equation*}
  \cc{\boldsymbol{\iota^*}F^0}
  =
  \boldsymbol{\iota^*}\cc{F^0},
\end{equation*}
where $F^0 = 1 \otimes F^0$ is again the lowest order of some
$\sum_{i,\alpha_i } x_{i\alpha_i }\otimes F^{i\alpha_i } \in\ker
\boldsymbol{D}_\std \cap \ker \boldsymbol{D}_\std^* \cap
\mathcal{A}^{(0)}[[\lambda]]$. Just as above, $\{x_{i\alpha_i
}\}_{\alpha_i }$ is a basis of $\Anti^i\liealg{g}^* \otimes \Anti^i
\liealg{g}$ and $F^{i\alpha_i }\in \Cinfty(M)[[\lambda]]$ for all
$i=0,1,\dots,n$ and all $\alpha_i $.  By the construction of $\prol$
and assuming $M=M_\nice$ in the notation of
\cite[Sec.~2.2]{gutt.waldmann:2010a} we know $\prol(1)=1\in
\Cinfty(M)[[\lambda]]$ for the constant function
$1\in\Cinfty(C)[[\lambda]]$. Thus we get
\begin{align*}
  F^0 \bullet 1 = \boldsymbol{\iota^*}( F^0 \star 1) =
  \boldsymbol{\iota^*} F^0 \in \Cinfty(C)^\group{G}[[\lambda]]
\end{align*}
as well as $\cc{F^0} \bullet 1 = \boldsymbol{\iota^*} \cc{F^0} \in
\Cinfty(C)^\group{G}[[\lambda]]$, which implies
$\Phi^*_{g^{-1}} \boldsymbol{\iota^*}F^0=  \boldsymbol{\iota^*}F^0$
and
$\Phi^*_{g^{-1}}\boldsymbol{\iota^*}\cc{F^0}= \boldsymbol{\iota^*}\cc{F^0}$.
Using \eqref{eq:SPredsimple} we can compute
\begin{align*}
  \SP{1, F^0 \bullet 1}_\red
  =
  \int_\group{G}  \boldsymbol{\iota^*}
  \left(
    \cc{\prol\left(\Phi^*_{g^{-1}}1\right)}
    \star
    \prol\left(\Phi^*_{g^{-1}} \boldsymbol{\iota^*}F^0\right)
  \right)
  \D^\lefttriv g
  =
  \boldsymbol{\iota^*}F^0 \int_\group{G}\D^\lefttriv g,
\end{align*}
and analogously
\begin{align*}
  \SP{ \cc{F^0} \bullet 1, 1}_\red
  =
  \int_\group{G} \boldsymbol{\iota^*}
  \left(
    \cc{\prol\left(\Phi^*_{g^{-1}}\boldsymbol{\iota^*}\cc{F^0}\right)}
    \star
	\prol\left(\Phi^*_{g^{-1}} 1\right)
  \right)
  \D^\lefttriv g
  =
  \cc{\boldsymbol{\iota^*} \cc{F^0}}\int_\group{G}  \D^\lefttriv g  ,
\end{align*}
which together with \eqref{eq:fbulletIsAdjointable} implies the
desired $\boldsymbol{\iota^*}F^0= \cc{\boldsymbol{\iota^*} \cc{F^0}}$.
\end{proof}

If the action is in addition free on $C$, we know that $M_\red =
C/\group{G}$ is a smooth manifold and with
Proposition~\ref{prop:QuantumBRSTCohomology} we have an induced star
product $\star_\red$ on $\Cinfty(M_\red)[[\lambda]]\cong
\Cinfty(C)^\group{G}[[\lambda]]$ given by
\begin{equation}
  \pi^*(u_1\star_\red u_2)
  =
  \boldsymbol{\iota^*}
  (\prol(\pi^*u_1)\star\prol(\pi^* u_2))
\end{equation}
for $u_1,u_2\in\Cinfty(M_\red)[[\lambda]]$. Thus we
immediately get:

\begin{corollary}
  \label{cor:CCasInvolutiononMred}
  Let $(M,\star,\group{G},\boldsymbol{J},C)$ be a Hamiltonian quantum
  $\group{G}$-space with regular constraint surface, positive definite
  inner product on $\liealg{g}$ and Hermitian star product $\star$. In
  addition, let the compact Lie group $\group{G}$ act freely on
  $C$. Then one has
  \begin{equation}
    \label{eq:IsomorphismReducedBRSTAlgMred}
	  \widetilde{\mathcal{A}}_\red
	  \cong
	  \Cinfty(M_\red)[[\lambda]]
	  \cong
	  \mathcal{A}_\red.
  \end{equation}
  Moreover, $\boldsymbol{\iota^*}$ induces the complex conjugation
  as involution on $(\Cinfty(M_\red)[[\lambda]],\star_\red)$.
\end{corollary}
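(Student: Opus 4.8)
The plan is to deduce the Corollary directly from Theorem~\ref{thm:AredtildeIsomorphicAred} together with Proposition~\ref{prop:QuantumBRSTCohomology}, the only new ingredient being the freeness of the action. First I would observe that since $\group{G}$ is compact its action is automatically proper, so that freeness on $C$ makes $M_\red = C/\group{G}$ a smooth manifold and $\pi\colon C\to M_\red$ a surjective submersion. Hence the pullback $\pi^*$ identifies $\Cinfty(M_\red)[[\lambda]]$ with $\Cinfty(C)^\group{G}[[\lambda]]$ as $\mathbb{C}[[\lambda]]$-modules. Substituting this identification into the chain $\widetilde{\mathcal{A}}_\red \cong \Cinfty(C)^\group{G}[[\lambda]] \cong \mathcal{A}_\red$ of Theorem~\ref{thm:AredtildeIsomorphicAred} yields \eqref{eq:IsomorphismReducedBRSTAlgMred}. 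Moreover, the product transported to $\Cinfty(M_\red)[[\lambda]]$ along $\boldsymbol{\iota^*}$ is exactly the reduced star product $\star_\red$ of Proposition~\ref{prop:QuantumBRSTCohomology}: for $u_1,u_2$ one has $\prol(\pi^* u_i) = 1\otimes \prol(\pi^* u_i)$, so by \eqref{eq:StarProductQuantumBRSTAlgebra} their $\star_\std$-product is $1\otimes(\prol(\pi^* u_1)\star\prol(\pi^* u_2))$, and applying $\boldsymbol{\iota^*}$ reproduces the defining formula \eqref{eq:ReducedStarProduct}.

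For the \emph{Moreover} part I would transport the graded $^*$-involution of $\widetilde{\mathcal{A}}_\red$ along $\boldsymbol{\iota^*}$ and check that the resulting involution on $\Cinfty(M_\red)[[\lambda]]$ is complex conjugation. Concretely, $u\in\Cinfty(M_\red)[[\lambda]]$ corresponds to $[\prol(\pi^* u)] = [1\otimes \prol(\pi^* u)]\in\widetilde{\mathcal{A}}_\red$, whose involution is $[1\otimes \cc{\prol(\pi^* u)}]$ by the formula $(\alpha\otimes f)^* = \alpha^*\otimes\cc{f}$ together with $1^* = 1$. Applying $\boldsymbol{\iota^*}$ gives $\boldsymbol{\iota^*}\cc{\prol(\pi^* u)}$. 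Since the action is proper, $\prol$ can be chosen $\group{G}$-equivariant, so $\prol(\pi^* u)$ is a $\group{G}$-invariant function on $M$. For invariant functions the compatibility $\boldsymbol{\iota^*}\cc{f}=\cc{\boldsymbol{\iota^*}f}$ holds by \cite[Cor.~4.6]{gutt.waldmann:2010a}; combined with $\boldsymbol{\iota^*}\prol = \id$ this gives $\boldsymbol{\iota^*}\cc{\prol(\pi^* u)} = \cc{\boldsymbol{\iota^*}\prol(\pi^* u)} = \cc{\pi^* u} = \pi^*\cc{u}$. Thus $\boldsymbol{\iota^*}$ intertwines the $^*$-involution with complex conjugation, and since it is an algebra isomorphism onto $(\Cinfty(M_\red)[[\lambda]],\star_\red)$ while $\widetilde{\mathcal{A}}_\red$ is a $^*$-algebra by Lemma~\ref{lemma:ReducedBRSTStarAlgebra}, complex conjugation is an involution for $\star_\red$, i.e.\ $\star_\red$ is Hermitian.

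I expect no real obstacle here, which matches the phrasing ``Thus we immediately get'' preceding the statement. The conceptually hard step---that $\boldsymbol{\iota^*}$ respects complex conjugation---was already done in the proof of Theorem~\ref{thm:AredtildeIsomorphicAred}, where the subtlety was precisely that the lowest-order term $F^0$ need not be invariant, forcing the use of the algebra-valued inner product of Definition~\ref{definition:algebravaluedinnerproduct}. In the Corollary, by contrast, one only ever feeds genuinely invariant prolonged functions $\prol(\pi^* u)$ into $\boldsymbol{\iota^*}$, so the direct invariance statement \cite[Cor.~4.6]{gutt.waldmann:2010a} applies immediately and no further work is needed; the remaining checks (that $\pi^*$ is a module isomorphism and that the products match) are routine.
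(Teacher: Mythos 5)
Your proposal is correct and follows essentially the same route as the paper: both rest on the $\group{G}$-equivariance of $\prol$ and the identity $\boldsymbol{\iota^*}\cc{f}=\cc{\boldsymbol{\iota^*}f}$ for $\group{G}$-invariant $f$ from \cite[Cor.~4.6]{gutt.waldmann:2010a}. The only cosmetic difference is that the paper verifies $\cc{u_1\star_\red u_2}=\cc{u_2}\star_\red\cc{u_1}$ by a direct computation applying that identity to the invariant function $\prol(\pi^*u_1)\star\prol(\pi^*u_2)$, whereas you transport the involution of $\widetilde{\mathcal{A}}_\red$ element by element and let Lemma~\ref{lemma:ReducedBRSTStarAlgebra} supply the anti-multiplicativity.
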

\begin{proof}
The fact that this construction induces the complex conjugation as involution
for the reduced star product follows as in \cite[Prop.~4.7]{gutt.waldmann:2010a}.
Explicitly, we have
\begin{align*}
	\pi^*\cc{(u_1\star_\red  u_2)}
	&=
	\cc{\boldsymbol{\iota^*}
	\left( \prol (\pi^* u_1)\star \prol( \pi^* u_2)\right)}
	=
	\boldsymbol{\iota^*}\left(\cc{ \prol (\pi^* u_1) \star\prol (\pi^* u_2)}\right) \\
	&=
	\boldsymbol{\iota^*}
	\left( \prol (\pi^* \cc{u_2}) \star \prol( \pi^*\cc{ u_1})\right)
	=
	\pi^*(\cc{u_2} \star_\red  \cc{u_1})
\end{align*}
for all $u_1,u_2 \in \Cinfty(M_\red)[[\lambda]]$.
\end{proof}
There exists another construction of a *-involution for $\star_\red$ via
the GNS representation for a suitably chosen positive functional depending
on a density, see \cite[Thm.~4.17]{gutt.waldmann:2010a}.
In comparison to this one we get
always the same *-involution for the reduced star product,
independently on the inner product $g$ on $\liealg{g}$. This is due to
the fact that the choice of a density for the positive functional is a
non-canonical one, whereas all inner products on the Lie algebra lead
to isomorphic reduced *-algebras. Also, in the only order where
$\boldsymbol{\iota^*}\at{\mathcal{A}^{(0)}[[\lambda]]}$ does not
vanish identically, i.e. on the functions
$\mathcal{A}^{0,0}[[\lambda]] = \Cinfty(M)[[\lambda]]$, the induced
involution is always just the complex conjugation.

\begin{remark}
	From \cite[Cor.~4.6]{gutt.waldmann:2010a} we know
	that one has
	$\boldsymbol{\iota^*}\cc{f} = \cc{\boldsymbol{\iota^*}f}$ for
	$\group{G}$-invariant functions $f\in \Cinfty(M)[[\lambda]]$, which
	implies that the complex conjugation is an involution for
	$\star_\red$.	Furthermore, as one needs this identity to show that
	$\SP{\argument,\argument}_\red$ satisfies for all
	$\phi,\psi \in \Cinfty(C)[[\lambda]]$
	\begin{align*}
	  \cc{\SP{\phi,\psi}}_\red
	  =
	  \SP{\psi,\phi}_\red
 	  \quad
	  \text{and}
	  \quad
	  \SP{\phi, f \bullet \psi}_\red
	  =
	  \SP{\cc{f} \bullet \phi, \psi}_\red,
	\end{align*}
	it is not surprising that the construction via the algebra-valued
	inner product yields again the complex conjugation as involution on
	$(\Cinfty(M_\red)[[\lambda]],\star_\red)$.
	Therefore, it is important to remark that the complex conjugation
	is induced by an isomorphism with the reduced quantum BRST
	*-algebra $\widetilde{\mathcal{A}}_\red \cong \Cinfty(M_\red)[[\lambda]]$
	if $M_\red$ exists as smooth manifold.
\end{remark}

%
%

%
%


\begin{thebibliography}{10}

\bibitem {bayen.et.al:1978a}
\textsc{Bayen, F., Flato, M., Fr{{\o}}nsdal, C., Lichnerowicz, A.,
  Sternheimer, D.:}\newblock \textit{Deformation Theory and
  Quantization}.
\newblock Ann. Phys.  \textbf{111} (1978), 61--151.
\par\csname bayen.et.al:1978achairxnote\endcsname

\bibitem {becchi.rouet.stora:1976a}
\textsc{Becchi, C., Rouet, A., Stora, R.:}\newblock
  \textit{Gauge Field Models}.
\newblock Renormalization Theory   (1976), 269--297.
\par\csname becchi.rouet.stora:1976achairxnote\endcsname

\bibitem {bordemann:2005a}
\textsc{Bordemann, M.:}\newblock \textit{(Bi)Modules,
  morphisms, and reduction of star-products: the symplectic case, foliations,
  and obstructions}.
\newblock Trav. Math.  \textbf{16} (2005), 9--40.
\par\csname bordemann:2005achairxnote\endcsname

\bibitem {bordemann.herbig.waldmann:2000a}
\textsc{Bordemann, M., Herbig, H.-C., Waldmann, S.:}\newblock
  \textit{BRST Cohomology and Phase Space Reduction in Deformation
  Quantization}.
\newblock Commun. Math. Phys.  \textbf{210} (2000), 107--144.
\par\csname bordemann.herbig.waldmann:2000achairxnote\endcsname

\bibitem {bordemann.waldmann:1998a}
\textsc{Bordemann, M., Waldmann, S.:}\newblock
  \textit{Formal GNS Construction and States in Deformation
  Quantization}.
\newblock Commun. Math. Phys.  \textbf{195} (1998), 549--583.
\par\csname bordemann.waldmann:1998achairxnote\endcsname

\bibitem {bursztyn.waldmann:2000a}
\textsc{Bursztyn, H., Waldmann, S.:}\newblock
  \textit{On Positive Deformations of {$^*$}-Algebras}.
\newblock In: \textsc{Dito, G., Sternheimer, D. (eds.):}\newblock
  \textit{Conf{\'e}rence Mosh{\'e} Flato 1999. Quantization,
  Deformations, and Symmetries}, \textit{Mathematical Physics
  Studies} no. \textbf{22},   69--80. Kluwer Academic Publishers, Dordrecht,
  Boston, London, 2000.

\bibitem {bursztyn.waldmann:2001a}
\textsc{Bursztyn, H., Waldmann, S.:}\newblock
  \textit{Algebraic Rieffel Induction, Formal Morita Equivalence
  and Applications to Deformation Quantization}.
\newblock J. Geom. Phys.  \textbf{37} (2001), 307--364.

\bibitem {bursztyn.waldmann:2005b}
\textsc{Bursztyn, H., Waldmann, S.:}\newblock
  \textit{Completely positive inner products and strong {M}orita
  equivalence}.
\newblock Pacific J. Math.  \textbf{222} (2005), 201--236.

\bibitem {bursztyn.waldmann:2005a}
\textsc{Bursztyn, H., Waldmann, S.:}\newblock
  \textit{Hermitian star products are completely positive
  deformations}.
\newblock Lett. Math. Phys.  \textbf{72} (2005), 143--152.

\bibitem {cattaneo.felder:2007a}
\textsc{Cattaneo, A.~S., Felder, G.:}\newblock
  \textit{Relative formality theorem and quantisation of
  coisotropic submanifolds}.
\newblock Adv. Math.  \textbf{208} (2007), 521--548.

\bibitem {dippell.esposito.waldmann:2019a}
\textsc{Dippell, M., Esposito, C., Waldmann, S.:}\newblock
  \textit{Coisotropic Triples, Reduction and Classical Limit}.
\newblock Documenta Mathematica  \textbf{24} (2019), 1811--1853.

\bibitem {fedosov:1996a}
\textsc{Fedosov, B.~V.:}\newblock \textit{Deformation
  Quantization and Index Theory}.
\newblock Akademie Verlag, Berlin, 1996.

\bibitem {forger.kellendonk:1992a}
\textsc{Forger, M., Kellendonk, J.:}\newblock
  \textit{Classical BRST Cohomology and Invariant Functions on
  Constraint Manifolds I}.
\newblock Commun. Math. Phys.  \textbf{143} (1992), 235--251.

\bibitem {gutt.rawnsley:2003a}
\textsc{Gutt, S., Rawnsley, J.:}\newblock
  \textit{Natural Star Products on Symplectic Manifolds and Quantum
  Moment Maps}.
\newblock Lett. Math. Phys.  \textbf{66} (2003), 123--139.

\bibitem {gutt.waldmann:2010a}
\textsc{Gutt, S., Waldmann, S.:}\newblock
  \textit{Involutions and Representations for Reduced Quantum
  Algebras}.
\newblock Adv. Math.  \textbf{224} (2010), 2583--2644.

\bibitem {henneaux.teitelboim:1992a}
\textsc{Henneaux, M., Teitelboim, C.:}\newblock
  \textit{Quantization of Gauge Systems}.
\newblock Princeton University Press, New Jersey, 1992.

\bibitem {jacobs:2012a}
\textsc{Jacobs, B.:}\newblock \textit{Involutive
  Categories and Monoids, with a GNS-Correspondence}.
\newblock Foundations of Physics  \textbf{42} (2012), 874--895.

\bibitem {kontsevich:2003a}
\textsc{Kontsevich, M.:}\newblock \textit{Deformation
  Quantization of {P}oisson manifolds}.
\newblock Lett. Math. Phys.  \textbf{66} (2003), 157--216.

\bibitem {kostant.sternberg:1987a}
\textsc{Kostant, B., Sternberg, S.:}\newblock
  \textit{Symplectic Reduction, {BRS} Cohomology, and
  infinite-dimensional {C}lifford Algebras}.
\newblock Ann. Phys.  \textbf{176} (1987), 49--113.

\bibitem {kraft:2018a}
\textsc{Kraft, A.:}\newblock \textit{BRST Reduction of
  Quantum Algebras with *-Involution}.
\newblock master thesis, Institute of Mathematics, University of
  {W}{\"u}rzburg, W{\"u}rzburg, Germany, 2018.

\bibitem {lance:1995a}
\textsc{Lance, E.~C.:}\newblock \textit{{H}ilbert
  {$C^*$}-modules. A Toolkit for Operator algebraists}, vol. 210 in
  \textit{London Mathematical Society Lecture Note Series}.
\newblock Cambridge University Press, Cambridge, 1995.

\bibitem {marsden.weinstein:1974a}
\textsc{Marsden, J., Weinstein, A.:}\newblock
  \textit{Reduction of symplectic manifolds with symmetry}.
\newblock Reports on Math. Phys.  \textbf{5} (1974), 121--130.

\bibitem {meinrenken:2013a}
\textsc{Meinrenken, E.:}\newblock \textit{Clifford
  algebras and {L}ie theory}, vol.~58 in \textit{Ergebnisse der
  Mathematik und ihrer Grenzgebiete. 3. Folge. A Series of Modern Surveys in
  Mathematics}.
\newblock Springer, Heidelberg, 2013.

\bibitem {morita:2001a}
\textsc{Morita, S.:}\newblock \textit{Geometry of
  differential forms}.
\newblock American Mathematical Society (AMS), Providence, RI, 2001.
\newblock Translation from the Japanese by Teruko Nagase and Katsumi Nomizu.

\bibitem {mueller-bahns.neumaier:2004a}
\textsc{M{\"u}ller-Bahns, M.~F., Neumaier, N.:}\newblock
  \textit{Some remarks on {$\mathfrak{g}$}-invariant Fedosov star
  products and quantum momentum mappings}.
\newblock J. Geom. Phys.  \textbf{50} (2004), 257--272.

\bibitem {neumaier:1998a}
\textsc{Neumaier, N.:}\newblock \textit{Sternprodukte
  auf {K}otangentenb{\"{u}}ndeln und {O}rdnungs-{V}orschriften}.
\newblock master thesis, Fakult{\"{a}}t f{\"{u}}r Physik,
  Albert-Ludwigs-Universit{\"{a}}t, Freiburg, 1998.

\bibitem {neumaier:2001a}
\textsc{Neumaier, N.:}\newblock
  \textit{Klassifikationsergebnisse in der
  {D}eformationsquantisierung}.
\newblock PhD thesis, Fakult{\"{a}}t f{\"{u}}r Physik,
  Albert-Ludwigs-Universit{\"{a}}t, Freiburg, 2001.
\newblock Available at {\url{https://www.freidok.uni-freiburg.de/data/2100}}.

\bibitem {neumaier:2002a}
\textsc{Neumaier, N.:}\newblock \textit{Local
  $\nu$-{E}uler Derivations and {D}eligne's Characteristic Class of {F}edosov
  Star Products and Star Products of Special Type}.
\newblock Commun. Math. Phys.  \textbf{230} (2002), 271--288.

\bibitem {reichert:2017a}
\textsc{Reichert, T.:}\newblock \textit{Characterstic
  classes of star products on Marsden-Weinstein reduced symplectic manifolds}.
\newblock Lett. Math. Phys.  \textbf{107} (2017), 643--658.

\bibitem {reichert:2017b}
\textsc{Reichert, T.:}\newblock \textit{Classification
  and Reduction of Equivariant Star Products on Symplectic Manifolds}.
\newblock PhD thesis, Institute of Mathematics, University of {W}{\"u}rzburg,
  W{\"u}rzburg, Germany, 2017.

\bibitem {reichert.waldmann:2016a}
\textsc{Reichert, T., Waldmann, S.:}\newblock
  \textit{Classification of Equivariant Star Products on Symplectic
  Manifolds}.
\newblock Lett. Math. Phys.  \textbf{106} (2016), 675--692.

\bibitem {tyutin:2008a}
\textsc{Tyutin, I.~V.:}\newblock \textit{Gauge
  Invariance in Field Theory and Statistical Physics in Operator Formalism}.
\newblock Preprint of P.N. Lebedev Physical Institute, No. 39, 1975,
  \textbf{arXiv:0812.0580} (2008).

\bibitem {waldmann:2007a}
\textsc{Waldmann, S.:}\newblock
  \textit{Poisson-{G}eometrie und {D}eformationsquantisierung.
  {E}ine {E}inf{\"u}hrung}.
\newblock Springer-Verlag, Heidelberg, Berlin, New York, 2007.

\bibitem {xu:1998a}
\textsc{Xu, P.:}\newblock \textit{Fedosov $*$-Products
  and Quantum Momentum Maps}.
\newblock Commun. Math. Phys.  \textbf{197} (1998), 167--197.

\end{thebibliography}
\end{document}
